\documentclass[a4paper]{amsart}

\usepackage[utf8]{inputenc}
\usepackage[english]{babel}
\usepackage{tikz-cd}
\usetikzlibrary{positioning,decorations.text,quotes,calc}
\usetikzlibrary{arrows,babel}
\usepackage{amsmath,amsfonts,amssymb}
\usepackage{mathtools}
\usepackage{float}
\usepackage{amsthm}
\usepackage{stackrel}
\usepackage{pgfplots}
\usepackage{graphicx}
\usepackage{bbm}
\usepackage{amssymb}
\usepackage{geometry,enumerate}
\usepackage{graphicx}
\usepackage{psfrag}
\usepackage{amscd}
\usepackage{comment}
\usepackage[all]{xy}
\usepackage{url}
\usepackage{leftindex}
\usepackage{mathrsfs}

\usepackage{fullpage}

\usepackage[dvipsnames]{xcolor}

\usepackage[backref=page]{hyperref}

\hypersetup{
 colorlinks,
 citecolor=Green,
 linkcolor=blue,
 urlcolor=Blue}

\usepackage{booktabs}
\usepackage{color}
\usepackage{todonotes}

\usepackage{braket}

\usepackage{stmaryrd} 
\usepackage{mathabx}
\usepackage{mathbbol}
\usepackage{xspace}
\usepackage{enumerate}
\usepackage{caption}
\usepackage{subcaption}

 \usepackage{cleveref}
\crefformat{section}{#2#1#3} 
\crefformat{subsection}{#2#1#3}

\newcommand{\ZZ}{\mathbb{Z}}
\newcommand{\QQ}{\mathbb{Q}}
\newcommand{\NN}{\mathbb{N}}
\newcommand{\RR}{\mathbb{R}}

\newcommand{\A}{\mathcal{A}}
\newcommand{\C}{\mathcal{C}}
\renewcommand{\O}{\mathcal{O}}

\renewcommand{\L}{\mathcal{L}}

\newcommand{\I}{\mathcal{I}}
\renewcommand{\L}{\mathcal{L}}

\newcommand{\F}{\mathcal{F}}
\newcommand{\D}{\mathcal{D}}

\newcommand{\J}{\mathcal{J}}

\newcommand{\M}{\mathcal{M}}
\newcommand{\R}{\mathcal{R}}

\newcommand{\un}{\underline}
\newcommand{\ov}{\overline}
\newcommand{\wh}{\widehat}
\newcommand{\wt}{\widetilde}
\newcommand{\cal}{\mathcal}

\newcommand{\Mbargn}{\overline{\mathcal{M}}_{g,n}}
\newcommand{\Cbargn}{\overline{\mathcal{C}}_{g,n}}

\newcommand{\Mgn}{{\mathcal{M}}_{g,n}}
\newcommand{\Cgn}{{\mathcal{C}}_{g,n}}
\newcommand{\Bgn}{\mathbb{B}_{g,n}}

\newcommand{\Mbarg}{\overline{\mathcal{M}}_{g}}

\newcommand{\Sigmas}{\operatorname{^{s}\Sigma}}
\newcommand{\Sigmans}{\operatorname{^{ns}\Sigma}}

\newcommand{\Dgn}{\mathbb{D}_{g,n}}
\newcommand{\Dgns}{{^{s}\mathbb{D}_{g,n}}}
\newcommand{\Dgnns}{{^{ns}\mathbb{D}_{g,n}}}

\newcommand{\wPR}{\wt{\operatorname{PR}}}
\newcommand{\PR}{\operatorname{PR}}

\newcommand{\Proj}{\operatorname{Proj}}

\newcommand{\coker}{\operatorname{coker}}

\newcommand{\Hom}{\operatorname{Hom}}
\newcommand{\wei}{\operatorname{wt}}
\newcommand{\Amp}{\operatorname{Amp}}
\newcommand{\irr}{\operatorname{irr}}

\newcommand{\rk}{\operatorname{rk}}
\newcommand{\BCon}{\operatorname{BCon}}

\newcommand{\Pic}{\operatorname{Pic}}
\renewcommand{\Im}{\operatorname{Im}}

\newcommand{\Spec}{\operatorname{Spec}}

\newcommand{\Tors}{\operatorname{Tors}}

\newcommand{\type}{\operatorname{type}}
\newcommand{\RelPic}{\operatorname{RelPic}}

\newcommand{\res}{\operatorname{res}}
\newcommand{\NS}{\operatorname{NS}}

\newcommand{\TF}{\operatorname{TF}}

\newcommand{\Gm}{\operatorname{\mathbb{G}_m}}

\newcommand{\Z}{\mathbb{Z}}

\pgfplotsset{compat=1.17}

\definecolor{LivGreen}{cmyk}{68 0 100 0}

\newtheorem{theorem}{Theorem}[section]
\newtheorem{corollary}[theorem]{Corollary}

\newtheorem{proposition}[theorem]{Proposition}
\newtheorem{proposition-definition}[theorem]{Proposition-Definition}
\newtheorem{lemma-definition}[theorem]{Lemma-Definition}

\newtheorem{theoremalpha}{Theorem}

\newtheorem*{corollary*}{Corollary}

\theoremstyle{definition}
\newtheorem{definition}[theorem]{Definition}

\newtheorem{remark}[theorem]{Remark}

\numberwithin{equation}{section}

\newenvironment{sis}{\left\{\begin{aligned}}{\end{aligned}\right.}

\usepackage{xpatch}
\let\sectionnotoc\section
\xpatchcmd{\sectionnotoc}{{1}}{{1001}}{}{}
\let\subsectionnotoc\subsection
\xpatchcmd{\subsectionnotoc}{{2}}{{1001}}{}{}

\begin{document}

\title{On the projectivity of compactified universal Jacobians}

\author{Filippo Viviani}
\address{Filippo Viviani, Dipartimento di Matematica, Universit\`a di Roma ``Tor Vergata'', Via della Ricerca Scientifica 1, 00133 Roma, Italy}
\email{viviani@mat.uniroma2.it}

\keywords{Compactified universal Jacobians, nodal curves.}

\subjclass{{14H10}, {14H40}, {14D22}.}

\begin{abstract}
Compactified universal Jacobian stacks/spaces over $\Mbargn$ were classified and studied in \cite{FPV3} (based upon the previous works \cite{FPV1} and \cite{FPV2}).

The aim of this paper is to investigate the projectivity of compactified universal Jacobian spaces, a question which was left open in \cite{FPV3}.

We show that the relatively projective compactified universal Jacobian spaces are exactly the classical ones (constructed independently, in the fine case, by Kass-Pagani \cite{Kass_2019} and Esteves-Melo \cite{esteves, Melo15, melo2019}), i.e those that come from (relative) $\mathbb{R}$-line bundles on the universal curve $\Cbargn/\Mbargn$ or equivalently from  vector bundles on the universal curve $\Cbargn/\Mbargn$.

A crucial step in the proof is the computation of the relative Picard groups of any compactified universal Jacobian stack/space.
\end{abstract}

\maketitle
	\bigskip
	
	\tableofcontents

\section{Introduction}    

For any pair of integers $(g,n)\in \NN^2$ such that $2g-2+n>0$, the \emph{universal Jacobian stack} of type $(g,n)$ is the stack $\J_{g,n}$ parametrizing pairs $(C,p_1, \ldots, p_n,L)$, where $(C,p_1, \ldots, p_n)$ is an element of the stack $\M_{g,n}$ of $n$-pointed smooth projective connected curves of genus $g$, and $L$ is a line bundle on $C$. The stack $\J_{g,n}$ admits a decomposition into a disjoint union of irreducible stacks $\coprod_{\chi \in \Z}\J_{g,n}^\chi$, where $\J_{g,n}^\chi$ parametrizes line bundles of Euler characteristic $\chi$. We will denote by $J_{g,n}:=\J_{g,n}\fatslash \Gm$, that is, the rigidification of $\J_{g,n}$ by the group $\Gm$ of scalar automorphisms (and similarly $J_{g,n}^\chi:=\J_{g,n}^\chi\fatslash \Gm$) and call $J_{g,n}$ the \emph{universal Jacobian space}. 

%The relative good moduli space of $\J_{g,n}\to \M_{g,n}$ is the universal Jacobian space $J_{g,n}=\coprod_{\chi\in \Z}J_{g,n}^\chi$ and the morphism $\J_{g,n}\to J_{g,n}$ is the $\Gm$-rigidification. 

\vspace{0.1cm}

There is a natural way of compactifying in a modular way the universal Jacobian stack/space over the moduli stack $\Mbargn$ of stable $n$-pointed nodal curves of genus $g$, as we now recall. 

   A \textbf{compactified universal Jacobian stack} of characteristic $\chi\in \Z$ over $\Mbargn$ is an open substack $\ov \J_{g,n}^{\chi}$ of the stack $\TF^{\chi}_{g,n}$ of relative rank-$1$ torsion-free sheaves of characteristic $\chi$ on $\Cbargn/\Mbargn$, admitting a relative proper good moduli space $\ov J_{g,n}^\chi\rightarrow \Mbargn$, called a \textbf{compactified universal Jacobian space}.  A compactified universal Jacobian stack $\ov \J_{g,n}^\chi$ (resp. space $\ov J_{g,n}^\chi$) is called \emph{fine} if $\ov J_{g,n}^{\chi}=\ov \J_{g,n}^\chi\fatslash \Gm$.
   The set of compactified universal Jacobian stacks over $\Mbargn$ is a poset under the natural inclusion relation. Note that any such compactified universal Jacobian stack $\ov \J_{g,n}^\chi$ (resp. space $\ov J_{g,n}^\chi$) is a "compactification" of the universal Jacobian stack (resp. space) since the restriction of $\ov \J_{g,n}^\chi$ (resp. of $\ov J_{g,n}^\chi$) to $\M_{g,n}$ is $\mathcal{J}_{g,n}^\chi$ (resp. $J_{g,n}^\chi$).

\subsectionnotoc{History and applications}

The search for compactified universal Jacobians (stacks and spaces) has been carried out by many authors using different approaches over the last thirty years. The first such compactification is due to Caporaso \cite{Cap94}, who constructed a compactified universal Jacobian space over $\ov\M_g$ using GIT of suitable Hilbert schemes of curves (see also \cite{pandharipande1995compactification} for another construction of this space using slope semistability). The corresponding compactified universal Jacobian stack over $\ov\M_g$ has been later studied by Caporaso \cite{Cap08} and Melo \cite{melo2009}, and it has been extended to $\Mbargn$ by Melo \cite{melo2011}. Later on, plenty of fine compactified universal Jacobians have been constructed over $\Mbargn$, independently, by Kass-Pagani \cite{Kass_2019} (building upon the work of Oda-Seshadri \cite{Oda1979CompactificationsOT} and Simpson \cite{simpson}) and by Melo \cite{Melo15, Melo17, melo2019} (building upon work of Esteves \cite{esteves}). Indeed, the two constructions produce the same set of fine compactified universal Jacobians  (see \cite[Rmk. 4.6]{Kass_2019} and \cite[Prop.~4.17]{melo2019}), that we call \emph{classical} fine compactified universal Jacobians, see subsection \ref{Sub:classcJ}. 

%Many compactified universal Jacobians have been constructed during the last thirty years by many authors using different techniques, see \cite{Oda1979CompactificationsOT}, \cite{Cap94}, \cite{pandharipande1995compactification}, \cite{esteves}, \cite{melo2009}, \cite{melo2011}, \cite{Melo15}, \cite{Melo17}, \cite{melo2019}, \cite{Kass_2019}. 

\vspace{0.1cm}

Compactified universal Jacobians have found several applications, among which we mention: a modular extension of the Torelli map (see \cite{alexeev}) and a Torelli theorem for stable curves (see \cite{caporasoviviani}); the study of the birational geometry (e.g. Kodaira dimension and Iitaka fibration) of $J_g$ (see \cite{BFV} and \cite{CMKV3}); the study of the tropicalization of $\J_{g,n}$ (see \cite{AP20}, \cite{MMUV}, \cite{AAPT}), its relation to the logarithmic universal Jacobian (see \cite{MW}, \cite{MMUVW}); its relation with the double ramification cycles (see \cite{dudin}, \cite{KPH}) and with the logarithmic double ramification cycles (see \cite{Mol23}, \cite{HMPPS}, \cite{MolchoFourier}); the computation and wall-crossing phenomena for universal Brill-Noether classes (see \cite{Kass_2017}, \cite{PRvZ}, and \cite{APag}). Recently, the cohomology of compactified universal Jacobians has also been the subject of investigation (see \cite{Yin}, \cite{wood}, \cite{Maulikcohom}, \cite{PPSW}, \cite{Wu}).

\subsectionnotoc{Classification of compactified universal Jacobians}

Recently, compactified universal Jacobians over $\Mbargn$ have been classified  by Fava-Pagani-Viviani in \cite{FPV3}, based on their previous works \cite{FPV1} and \cite{FPV2} (see also \cite{pagani2023stability}, \cite{viviani2023new} and \cite{fava2024} for the fine case).   We review this classification in detail in Section \ref{Sec:cuJ}, limiting ourselves to a brief summary in this introduction. 

Given a curve $X=(X,p_i)\in \Mbargn$, we look at the set $\BCon(X)$ of biconnected subcurves of $X$, i.e. subcurves $Y$ of $X$ such that both $Y$ and its complementary subcurve $Y^c:=\ov{X-Y}$ are connected, which comes with the following type function 
\begin{equation*}
    \begin{aligned}
      \type_X: \BCon(X) & \longrightarrow \Dgn:=\left\{ \begin{aligned}
          (e;h,A): 1\leq e, 0\leq h\leq g-e+1, A\subseteq [n], \\
          0<2h-2+e+|A|<2g-2+n
          \end{aligned}\right\}\\
      Y & \mapsto \type_X(Y):=(|Y\cap Y^c|; g(Y),\{i\in [n]\: : p_i\in Y\}).
    \end{aligned}
\end{equation*}
We then define the set $\Sigma_{g,n}$ of \emph{V-functions} of type $(g,n)$, which are certain functions 
 \begin{align*}
        \sigma:\Dgn&\to \ZZ
    \end{align*}
satisfying two conditions with respect to complements and triangles in $\Dgn$ (see Definition \ref{D:Sigmagn}), endowed with a poset structure coming from the pointwise order relation.

Then the classification result of \cite{FPV3} (see Theorem \ref{T:class-cJ} and the references therein) says that  there is an anti-isomorphism of posets  
$$
\begin{aligned}
\Sigma_{g,n} & \xrightarrow{\cong} \left\{\text{Compactified universal Jacobian stacks over $\Mbargn$}\right\},\\
\sigma &\mapsto \ov \J_{g,n}(\sigma):=\left\{
\begin{aligned} 
& (X,I) \in  \TF_{g,n}^{|\sigma|}: \: \chi(I_Y)\geq \sigma(\type_X(Y)) \\ 
&  \text{ for any } Y\in \BCon(X) 
\end{aligned} 
\right\},
\end{aligned}
$$
where $\displaystyle I_Y:=\frac{I_{|Y}}{\Tors(I_{|Y})}$ is the torsion-free quotient of the restriction $I_{|Y}$  (it is called the torsion-free restriction of $I$ to $Y$) and $|\sigma|\in \ZZ$ is the characteristic of $\sigma$. 

\subsectionnotoc{Classical compactified universal Jacobians}
A special class of compactified universal Jacobian was studied in \cite[Sec. 4]{FPV3} and called \textbf{classical} because they include all compactified universal Jacobian stacks/spaces constructed in the literature prior to the full classification obtained in \cite{FPV3} (and to the examples appearing in \cite{PTgenus1}, \cite{pagani2023stability}, \cite{fava2024} in the fine case).

Let us review the definition of classical compactified universal Jacobians (see Subsection \ref{Sub:classcJ} for more details). Denote by $\pi:\Cbargn\to \Mbargn$ the universal family. There are two ways of defining classical compactified universal Jacobian stacks:
\begin{itemize}
    \item[(A)] For any real line bundle $L$ on $\Cbargn$ of integral relative degree $\deg_{\pi}(L)=\chi$,
 we define 
$$
\ov \J_{g,n}(L):=
   \left\{
\begin{aligned} 
& (X,I) \in  \TF_{g,n}^{\chi}: \: \chi(I_Y)\geq \deg_Y(L_{|X}) \\ 
&  \text{ for any } Y\in \BCon(X)
\end{aligned} 
\right\}
$$
and we denote by $\ov J_{g,n}(L)$ the associated space.
\item[(B)] For any vector bundle $E$ on $\Cbargn$ with integral relative slope $\mu_{\pi}(E):=\frac{\deg_{\pi}(E)}{\rk E}=-\chi$, we define 
$$
\ov \J_{g,n}^E:=
   \left\{
\begin{aligned} 
& (X,I) \in  \TF_{g,n}^{\chi}: \: \chi(I_Y)\geq -\frac{\deg_Y(E_{|X})}{\rk E} \\ 
&  \text{ for any } Y\in \BCon(X)
\end{aligned} 
\right\},
$$
and we denote by  $\ov J_{g,n}^E$ the associated space.
\end{itemize}
In the fine case, the compactified universal Jacobians $\{\ov \J_{g,n}(L)\}$ were defined and studied by Kass-Pagani \cite{Kass_2019} (building upon the work of Oda-Seshadri \cite{Oda1979CompactificationsOT} and Simpson \cite{simpson}), while the compactified universal Jacobians $\{\ov \J_{g,n}^E\}$ were defined and studied by Melo \cite{Melo15, Melo17, melo2019}  (building upon work of Esteves \cite{esteves}). Indeed, the above authors proved that the two constructions produce the same set of fine compactified universal Jacobians  (see \cite[Rmk. 4.6]{Kass_2019} and \cite[Prop.~4.17]{Melo15}).

The extension of the above two constructions to the non-fine case is due to Fava-Pagani-Viviani (see \cite[Ex. 4.27, 6.7]{FPV1}) and the fact that they produce, also in the non-fine case, the same set of compactified universal Jacobians follows from the results of \cite[Sec. 4]{FPV3}. 

Each of the above two constructions has its own advantages, as we now point out.

\begin{itemize}
    \item The construction (A) allows for a classification of classical compactified universal Jacobians. Indeed, consider the real affine space 
$$
\RelPic^\chi(\Cbargn)_{\RR}:=\frac{\{L\in \Pic(\Cbargn)_{\RR}\: : \deg_{\pi}(L)=\chi\}}{\pi^*\Pic(\Mbargn)_{\RR}},
$$
which can be explicitly described as a consequence of the weak Franchetta's conjecture (see Theorem \ref{T:RPic-Cbar}). There is an explicit hyperplane arrangement in $\RelPic^{\chi}(\Cbargn)_{\RR}$ (see \eqref{E:arr-hyp}) whose poset of regions $\R_{g,n}^\chi$ is such that  $\ov \J_{g,n}(L)=\ov \J_{g,n}(L')$ if and only if $L$ and $L'$ belong to the same region, which we denote by $[L]=[L']$. Hence, there is an anti-isomorphism of posets (see \eqref{E:sigma-pos} and Definition \ref{D:classcJ})
$$
\begin{aligned}
  \R_{g,n}=\bigsqcup_{\chi \in \ZZ}\R_{g,n}^\chi & \xrightarrow{\cong} \{\text{Classical compactified universal Jacobian stacks}\}\\
  [L]& \mapsto \ov J_{g,n}([L]).
\end{aligned}
$$
    \item The construction (B) allows us to describe explicitly a relative polarization on a classical compactified universal Jacobian space, i.e. a line bundle which is relatively ample over $\Mbargn$. More precisely, a polarization for $\ov J_{g,n}^E$ is the descent of the following line bundle on  $\ov \J_{g,n}^E$ (which we call the \emph{Esteves's line bundle}):
    \begin{equation*}
\L_{E}:=d_{\pi}(\I\otimes p_1^*(E))^{-1},
\end{equation*}
where $d_{\pi}$ denotes the determinant of cohomology with respect to the universal family $\pi:\Cbargn\times_{\Mbargn} \ov \J_{g,n}^E\to \ov \J_{g,n}^E$, $\I$ is the universal sheaf and $p_1$ is the projection onto the first factor (see \cite[Thm. 7.1]{FPV1} which is based upon \cite[Sec. 6,7]{esteves}).
\end{itemize}

\subsectionnotoc{Our results}

A compactified universal Jacobian space $\ov J_{g,n}(\sigma)$ is called \textbf{relatively projective} if the morphism $\ov J_{g,n}(\sigma)\to \Mbargn$ is projective (rather than just proper); and a compactified universal Jacobian stack $\ov \J_{g,n}(\sigma)$ is called relatively projective if its associated compactified universal Jacobian space $\ov J_{g,n}(\sigma)$ is relatively projective. Equivalently, a compactified universal Jacobian space is relatively projective if and only if its coarse moduli space is projective (over $\Spec(\Z)$), see the proof of Corollary \ref{C:proj-coarse}. 

An interesting and natural problem, which was left open in \cite{FPV3}, is to determine whether or not all compactified universal Jacobian stacks/spaces are relatively projective. The goal of this paper is to answer negatively to this question. 

More generally, we classify all the compactified universal Jacobian stacks/spaces which are relatively projective and we show that they coincide with the classical compactified universal Jacobian stacks/spaces.

\begin{theoremalpha}(see Theorem \ref{T:cla-proj})\label{T:thmA}
\noindent 
\begin{enumerate}
    \item \label{T:thmA1} A compactified universal Jacobian space is relatively projective  if and only if it is isomorphic over $\overline{\mathcal{M}}_{g,n}$ to a classical compactified universal Jacobian space.
    %$\overline{J}_{g,n}([L])$, for some region $[L]\in \mathcal{R}_{g,n}$.
    \item \label{T:thmA2} A compactified universal Jacobian stack  is relatively projective  if and only if it is equal to a classical compactified universal Jacobian stack.
\end{enumerate}
\end{theoremalpha}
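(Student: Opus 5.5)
The "if" direction is already available. For a classical compactified universal Jacobian $\ov\J_{g,n}([L])$, I would use construction (B) to write it as $\ov\J^E_{g,n}$ for a suitable vector bundle $E$ on $\Cbargn$. By \cite[Sec.~4]{FPV3}, every region $[L]$ is realized by some $E$. Esteves's line bundle $\L_E$ then descends to a relatively ample line bundle on $\ov J^E_{g,n}$ by \cite[Thm.~7.1]{FPV1}. So every classical compactified universal Jacobian space is relatively projective.

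For the converse, let $\ov J_{g,n}(\sigma)\to\Mbargn$ be projective, with relatively ample line bundle $M$. The plan has three steps.

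First, I would compute the relative Picard group of $\ov\J_{g,n}(\sigma)$, and of $\ov J_{g,n}(\sigma)$, over $\Mbargn$; the abstract flags this as the crucial step. The expected answer is that it does not depend on $\sigma$. It should be generated by the determinant-of-cohomology classes $d_\pi(\I\otimes p_1^*N)$ for $N\in\Pic(\Cbargn)$, modulo $\pi^*\Pic(\Mbargn)$. In other words, it should be a quotient of $\RelPic(\Cbargn)$, described via the weak Franchetta theorem (Theorem \ref{T:RPic-Cbar}). To prove this, I would use that the complement of $\J_{g,n}$ in $\ov\J_{g,n}(\sigma)$ is the preimage of the boundary. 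Over $\M_{g,n}$, the relative Picard group of the universal Jacobian is known (Kouvidakis, Kass–Pagani). So it remains to control the boundary divisors of $\ov\J_{g,n}(\sigma)$: they lie over $\Delta_{\irr}$ and over the $\Delta_{h,A}$. Over $\Delta_{h,A}$, the fiber over the generic point of the boundary divisor splits into several components, indexed by the possible multidegrees. Twisting by these components is exactly what produces the classes $d_\pi(\I\otimes\O(\text{section or boundary component}))$.

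Second, I would restrict $M$, written as $d_\pi(\I\otimes p_1^*N)^{-1}$ up to pullback, to the fibers over general points of each codimension-one boundary stratum $\Delta_{h,A}$. The general curve in such a stratum has two components $Y,Y^c$ meeting in one node. Its compactified Jacobian is a union of products of Jacobians, glued along strata where the sheaf fails to be locally free at the node. On each one-dimensional degeneration linking two adjacent multidegrees, I would compute the degree of $M$. Ampleness forces this degree to be positive. Sign conventions aside, positivity should translate exactly into the inequalities saying that the multidegrees allowed by $\sigma(h,A)$ are the semistable ones for the real line bundle $L:=-N/\rk$, i.e.\ $\sigma(\type_X(Y))=\lceil\deg_Y L\rceil$ with the correct strictness in the non-fine case. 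Here I would also need more general curves, with $e>1$ nodes between $Y$ and $Y^c$, since $\Dgn$ contains all types $(e;h,A)$. I would handle these by restricting to curves in the deeper stratum, using the same one-parameter test families, and using that $\deg_Y$ of a line bundle on $\Cbargn$ is computed by the Franchetta description.

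Third, I would conclude that $\sigma=\sigma_{[L]}$, so $\ov\J_{g,n}(\sigma)=\ov\J_{g,n}([L])$, which gives part (2). For part (1), I would observe that an isomorphism of spaces over $\Mbargn$ can be compared on the stacky side. By the relative Picard computation, the ample class again comes from $\RelPic(\Cbargn)$, and the argument above applies to the identified stack. Alternatively, (1) can be reduced to (2) using the classification of \cite{FPV3}, up to the automorphisms (translations) acting on $\Sigma_{g,n}$.

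The main obstacle is the first step, the relative Picard computation. In particular, I need to show there are no extra line bundles supported on the boundary components of $\ov J_{g,n}(\sigma)$ beyond those coming from twists of the universal sheaf. Otherwise the ample $M$ might not be of Esteves type, and the argument in the second step would not apply.
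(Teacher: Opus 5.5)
Your strategy is to read off $\sigma$ from positivity of the ample bundle on boundary fibres, which is different from the paper's. As written, however, it has a genuine gap: it fails at separating nodes and leaves the decisive non-separating case undone.

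\emph{Separating nodes.} Over a general point of $\Delta_{h,A}$, the fibre of the \emph{space} $\ov J_{g,n}(\sigma)$ is always the single product $J_Y\times J_{Y^c}$. If $(1;h,A)\notin\D(\sigma)$, only one multidegree is allowed. If $(1;h,A)\in\D(\sigma)$, every line bundle in either component of the stacky fibre is S-equivalent to a polystable $I_Y\oplus I_{Y^c}$, so both components map isomorphically onto the same $J_Y\times J_{Y^c}$. Hence there are no curves ``linking adjacent multidegrees'', and ampleness says nothing about $\sigma(1;h,A)$. This is consistent with Theorem \ref{T:iso-cJ}: up to isomorphism the space only sees $\sigma^{ns}$. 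Moreover, by Theorem \ref{T:RelPic-cJ} the class of your ample bundle modulo $\Pic(\Mbargn)$ is determined by its restriction to $J^\chi_{g,n}$, i.e.\ by the data $(r,s,a_i)$ of Corollary \ref{C:amplecJ}. So nothing pins down the boundary part of your $N$. The separating part of $\sigma$ has to be \emph{matched by a choice} of $D=\sum\gamma_{(h,A)}\O(C_{(h,A)})$, as in the Claim in the paper. In general this first requires replacing the ample bundle by a power so that $r\geq 2$. A single $d_\pi(\I\otimes p_1^*N)^{-1}$ with $N$ a line bundle only realizes $r=1$; then every $\deg_Y L$ is an integer and every separating type would be forced to be degenerate.

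\emph{Non-separating types.} All the non-classicality sits in $\sigma^{ns}$ (Remark \ref{R:prop-class}). You dispatch exactly this part in one sentence, using ``the same test families'', which, as just seen, do not exist at separating nodes. To make your direct route work you need two ingredients.
\begin{itemize}
\item[(i)] On fibres over curves without separating nodes, the pull-back of your ample bundle must agree with Esteves's $\L_E=d_\pi(\I\otimes p_1^*E)^{-1}$, where $E=\O^{\oplus(r-1)}\oplus(s\omega_\pi+\sum_i a_i\Sigma_i)$. This follows from Proposition \ref{P:for-pol} and Theorem \ref{T:RelPic-cJ}, since $\ker(\res)$ is generated by the divisors $\Delta^1_{h,A}(\sigma)$, which lie over curves with a separating node. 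These classes have non-zero $\Gm$-weight at $I_Y\oplus I_{Y^c}$, so your predicted $\sigma$-independence of the relative Picard group fails for the stack.
\item[(ii)] You need an actual computation on a vine curve $X=Y\cup Y^c$ of type $(e;h,A)$ with $e\geq 2$. For instance, fix the restrictions to $Y$ and $Y^c$, keep $e-2$ nodes non-free, and vary the gluing at the other two. This gives a rational curve in the fibre on which $\L_E$ has degree $r\chi(I_Z)+\deg_Z(E)$, where $Z\in\{Y,Y^c\}$ is the side on which $\chi(I_Z)$ attains its lower bound.
\begin{itemize}
\item[$\circ$] If $(e;h,A)\notin\D(\sigma)$, both sides occur, and positivity yields $-\deg_Y(E)/r<\sigma(e;h,A)<1-\deg_Y(E)/r$.
\item[$\circ$] If $(e;h,A)\in\D(\sigma)$, you must instead use that the $\Gm$-weight at the polystable points $I_Y\oplus I_{Y^c}$ vanishes, which gives $\sigma(e;h,A)=-\deg_Y(E)/r$.
\end{itemize}
\end{itemize}
With (i), (ii) and the choice of $D$, your route would prove $\sigma$ classical. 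That gives part (2), and part (1) at once, so your Step 3 detour is unnecessary.

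The paper avoids (ii) altogether. It only matches $\sigma^s$, so that $\ov J_{g,n}(\sigma)$ and $\ov J_{g,n}([L])$ agree over the big open $\Mbargn^{\leq 1}$, where the ample bundle is identified with $\L_{(M+D)/r}$ via $\wh{\res}^o$. Comparing the $\Proj$ of the section rings over $\Mbargn$ then gives $\ov J_{g,n}(\sigma)\cong\ov J_{g,n}([L])$. Part (2) follows from Theorem \ref{T:iso-cJ} and Remark \ref{R:prop-class}.
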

Since there are non-classical compactified universal Jacobians (also fine ones) as soon as $g,n>0$ and $2g+n \geq 8$ (see Subsection \ref{Sub:classcJ}), the above Theorem implies that there are non-relatively projective compactified universal Jacobian stacks/spaces, which answers negatively the Open Question (1) of \cite{FPV3}.

\vspace{0.1cm}

A crucial ingredient in the proof of the above result is the computation of the relative Picard groups of the universal Jacobian stack/space and of any compactified universal Jacobian stack/space, a result that is interesting in its own.

\begin{theoremalpha}\label{T:thmB}(see Theorems \ref{T:RPicJ}, \ref{T:RPicJ-rig}, \ref{T:RelPic-cJ} and Corollary \ref{C:amplecJ}) 
\noindent 
\begin{enumerate}
\item \label{T:thmB1} The relative Picard group $\RelPic(\J_{g,n}^\chi/\Mgn)$ is generated by
$$
d(\L), \quad \langle \L, \L \rangle, \quad \xi_i:=\langle \L, \O(\sigma_i)\rangle=\sigma_i^*(\L) \text{ for } 1\leq i \leq n,
$$
where $\L$ is the tautological line bundle on the universal family $\pi:\Cgn\times_{\Mgn} \J_{g,n}^\chi\to \J_{g,n}^\chi$, subject to the following relations:
\begin{itemize}
    \item if $g=1$ then $\langle \L, \L \rangle\equiv d(\L)^2$; 
    \item if $g=0$ then $\xi:=\xi_1=\ldots \xi_n$, $d(\L)=\chi \xi$ and $\langle \L, \L\rangle= 2(\chi-1)\xi$.
    \end{itemize}
    \item \label{T:thmB2} The relative Picard group 
$\RelPic(J_{g,n}^\chi/\Mgn)$  is the subgroup of  $\RelPic(\J_{g,n}^\chi/\Mgn)$ given by 
$$\RelPic(J_{g,n}^\chi)=\left\{
(2s-r)d(\L)-s\langle \L,\L\rangle-\sum_i a_i \xi_i \: : 
r\chi+s(2g-2)+\sum_i a_i=0
\right\}$$
and the relative ample cone of  $J_{g,n}^\chi\to \Mgn$ is equal to 
$$\Amp(J_{g,n}^\chi/\Mgn)=\left\{
(2s-r)d(\L)-s\langle \L,\L\rangle-\sum_i a_i \xi_i \: : 
r\chi+s(2g-2)+\sum_i a_i=0, r>0
\right\}.$$
    \item \label{T:thmB3}
Let $\ov \J_{g,n}(\sigma)$ be a compactified universal Jacobian stack and let $\ov J_{g,n}(\sigma)$ its associated compactified universal Jacobian space. Then we have the following commutative diagram
\begin{equation*}
\begin{tikzcd}
  \res: \RelPic(\ov \J_{g,n}(\sigma)/\Mbargn)  \arrow["\res^{\leq 1}", r, "\cong" swap] & \RelPic(\ov \J_{g,n}(\sigma)^{\leq 1}/\Mbargn^{\leq 1}) \arrow[r, "\res^o", twoheadrightarrow]& \RelPic(\J_{g,n}^\chi/\Mgn)   \\ 
  \wh{\res}: \RelPic(\ov J_{g,n}(\sigma)/\Mbargn)  \arrow["\wh{\res}^{\leq 1}", r, hook] \arrow[u, hook]& \RelPic(\ov J_{g,n}(\sigma)^{\leq 1}/\Mbargn^{\leq 1}) \arrow[r, "\wh{\res}^o", "\cong" swap] \arrow[u, hook]& \RelPic(J_{g,n}^\chi/\Mgn) \arrow[u, hook]  
\end{tikzcd}
\end{equation*}
where the horizontal map are the restriction maps over the open substacks 
$$\Mgn\subset \Mbargn^{\leq 1}:=\{X\in \Mbargn: \: X \text{ has at most one node}\} \subset \Mbargn,$$
and the upper arrows are the inclusions given by pull-back along the relative good moduli space morphisms.
Furthermore:
\begin{itemize}
    \item $\ker(\res)\cong \ker(\res^o)$ is generated by the relative boundary divisors of $\ov \J_{g,n}^\chi/\Mbargn$.
    \item The maps $\res$ and $\wh{\res}$ are isomorphisms (or equivalently, $\res^o$ and $\wh{\res}^{\leq 1}$ are isomorphisms) if $\ov \J_{g,n}(\sigma)$ is fine. 
\end{itemize}
\end{enumerate}
\end{theoremalpha}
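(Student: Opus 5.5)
The overall strategy for Theorem \ref{T:thmB} is to work from the open part $\M_{g,n}$ outward. Part (1) is computed on the smooth locus, part (2) follows from (1) by a weight computation, and part (3) is obtained by extending across the boundary, using the locus $\Mbargn^{\leq 1}$ of curves with at most one node as a bridge.

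For part (\ref{T:thmB1}), I would use the fibration $\J_{g,n}^\chi\to\Mgn$, whose fibers are Picard stacks $\Pic^\chi(C)$, i.e.\ $B\Gm$-gerbes over abelian torsors $\mathrm{Pic}^{d}(C)$ with $d=\chi+g-1$.
\begin{itemize}
\item By Franchetta-type results for the Jacobian over $\Mgn$ (the relative Néron–Severi of the universal Jacobian is generated by the theta divisor, together with the Abel–Jacobi type classes coming from the marked sections), every relative line bundle on the rigidified space $J^\chi_{g,n}$ is a combination of Deligne pairings and determinants of cohomology of $\L$. The rigidified space has no relative $\mathrm{Pic}^0$ contribution, since the universal Jacobian over $\Mgn$ has no nontrivial sections up to torsion; here Franchetta enters.
\item Passing to the stack adds one more generator, recorded by the $\Gm$-weight character.
\item I would then check that $d(\L)$, $\langle\L,\L\rangle$ and the $\xi_i$ generate, by restricting to a fiber and computing their first Chern classes in $\NS$ together with their weights. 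Since $\L$ has weight $1$, $d(\L)$ has weight $\chi$, $\langle\L,\L\rangle$ has weight $2(\chi-1)+2g$ (to be checked via the Deligne–Riemann–Roch formula), and $\xi_i$ has weight $1$.
\item The relations in genus $0$ and $1$ come from the degenerate structure of the fibers: in genus $0$ the fiber is $B\Gm$, so only weights survive; in genus $1$, Riemann–Roch for Deligne pairings gives $\langle\L,\L\rangle=d(\L)^2$ modulo pullbacks.
\end{itemize}

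For part (\ref{T:thmB2}), the rigidification picks out the weight-zero classes. Writing a general class as $(2s-r)d(\L)-s\langle\L,\L\rangle-\sum a_i\xi_i$ and imposing weight zero yields the displayed linear equation. For ampleness on each abelian-torsor fiber, I would compute the class in $\NS$: only $r$ contributes to the theta multiple, with the combination arranged so that the $\Theta$-coefficient equals $r$. Hence ampleness holds exactly when $r>0$.

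For part (\ref{T:thmB3}), the argument proceeds in three steps.
\begin{itemize}
\item \emph{Kernel of $\res$.} Since $\ov\J_{g,n}(\sigma)$ is smooth (an open in $\TF$, which is smooth), line bundles extend from the open locus, and the kernel of restriction to the open part is generated by boundary divisors. Restriction to codimension $\geq 2$ is injective, which gives $\res^{\leq1}$ an isomorphism, because the complement of the $\leq 1$-node locus has codimension $2$. Surjectivity of $\res^o$ follows because $d(\L)$, $\langle\L,\L\rangle$ and $\xi_i$ are defined globally using the universal sheaf.
\item \emph{Spaces.} For the good moduli spaces I would use the criterion that a line bundle descends if and only if the stabilizers act trivially on its fibers at closed points. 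Over the one-node locus, the stabilizers at strictly semistable points are $\Gm^2$. The boundary divisors lying over a one-node separating type carry nontrivial weights there, which makes $\wh\res^o$ an isomorphism: every weight-zero class on the open part extends uniquely, after twisting by boundary divisors, to one that descends.
\item \emph{Fine case.} There are no strictly semistable points, so the relevant boundary divisors are themselves pullbacks of divisors from $\Mbargn$, giving the isomorphisms claimed.
\end{itemize}

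The main obstacle I expect is part (\ref{T:thmB1}): proving that these classes generate the full relative Picard group of $\J^\chi_{g,n}$ over $\Mgn$, and not merely a finite-index subgroup. This needs a Franchetta-type theorem for the Jacobian (the strong Franchetta conjecture with marked points), which I would take from the literature on the Picard group of the universal Picard stack (Melo–Viviani, Kouvidakis). I would try first to pull back along the Abel map $\Cgn^{d}\to\J$ to pin down the torsion.
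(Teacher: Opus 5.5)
\paragraph{The gap is in part~(1).} You put the classes coming from the marked sections into the N\'eron--Severi part, and you assert that $J^\chi_{g,n}$ has no relative $\Pic^0$ contribution. For $n\geq 1$ both claims are false.

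On a fiber one has $\res^{\NS}_C(\xi_i)=0$, while $d(\L)\mapsto -[\Theta_C]$ and $\langle\L,\L\rangle\mapsto -2[\Theta_C]$. Moreover, the universal Jacobian does have non-torsion sections, such as $\O(\sigma_1-\sigma_2)$ or $\omega_\pi(-(2g-2)\sigma_1)$. In fact the kernel of $\RelPic(J^\chi_{g,n})\to \NS$ is a copy of $\RelPic^0(\Cgn)$, embedded by $L=\omega_\pi^m(\sum_i\mu_i\sigma_i)\mapsto t_L^*d(\L)-d(\L)\equiv -2m\,d(\L)+m\langle\L,\L\rangle+\sum_i\mu_i\xi_i$. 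This kernel has rank $n$ when $g\geq 2$. For example, $\xi_1-\xi_2$ has weight $0$ and zero N\'eron--Severi class, but it is nonzero.

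Consequently, your test ``restrict to a fiber and compute N\'eron--Severi classes and weights'' only sees a rank-$2$ quotient of $\RelPic(\J^\chi_{g,n})$. It can prove neither that $d(\L)$, $\langle\L,\L\rangle$, $\xi_1,\ldots,\xi_n$ generate, nor that the stated genus $0$ and $1$ relations are the only ones.

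The missing ingredient is an independent description of the algebraically trivial part, i.e.\ the structure recorded in \eqref{E:exa-gen}. Concretely, this means strong Franchetta for $n$-pointed curves (the generic curve has Picard group generated by $\omega_\pi$ and the sections), together with the identification, via the principal polarization, of $\Pic^0$ of the generic Jacobian with $\Pic^0$ of the generic curve. The paper does not attempt any of this: it quotes \cite[Thm.~0.4.1, 0.4.2]{FV1} with $T=\Gm$. For $n\geq 1$ that result, rather than \cite{MV14}, is what you need. Granting (1), your derivation of (2) from the weights $\chi$, $2(\chi+g-1)$, $1$ and the fiberwise theta classes is exactly the paper's.

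\paragraph{Part~(3).} This is fine in outline and follows the paper for $\res^{\leq 1}$ and $\res^o$. The inputs are the regularity of $\ov\J_{g,n}(\sigma)$ and Proposition~\ref{P:bound-div}, by which the only boundary divisors over $\Mbargn^{\leq 1}$ that are not pullbacks from $\Mbargn$ are the $\Delta^i_{h,A}(\sigma)$ with $(1;h,A)\in\D(\sigma)$. One small correction: $\langle\L,\L\rangle$ does not literally extend, since $\I$ is not invertible; use $d(\I\otimes\omega_\pi)$ instead, or simply regularity.

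For $\wh\res^o$ you take a genuinely different route: descent through the $\Gm^2$-stabilizers at the polystable points $L_1\oplus L_2$. The paper instead uses regularity of $\ov J_{g,n}(\sigma)^{\leq 1}$ (the tree-like criterion, Proposition \ref{P:singu}\eqref{P:singu3}) together with the fact that $f(\sigma)^*\Delta_{(h,A)}=\wh\Delta_{(h,A)}$ is prime. Your route works, but it requires the weight of $\O(\Delta^1_{h,A}(\sigma))$ for the anti-diagonal torus to be exactly $\pm1$. This holds in the local model $xy=t$, where $\Gm^2$ acts on $x$ and $y$ through opposite characters. Mere nontriviality of the weight would not give surjectivity.

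Finally, in the fine case, ``all boundary divisors are pullbacks'' only yields that $\res$ is an isomorphism. For $\wh\res$ you still have to explain why $\wh\res^{\leq 1}$ is surjective. The reason is that all relative stabilizers are the central $\Gm$, so the weight-zero extensions furnished by $\res$ descend to $\ov J_{g,n}(\sigma)$; this is precisely the paper's argument.
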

The Theorem generalizes the results of  Melo-Viviani \cite{MV14} for the Caporaso compactified Jacobian stack $\ov \J_g^{Cap,\chi}$ (resp. space $\ov J_g^{Cap,\chi}$) over $\ov{\M}_g$. Part \eqref{T:thmB1} and the first half of part\eqref{T:thmB2} are a special case of the results of Fringuelli-Viviani \cite{FV1, FV2}, which compute the relative Picard group of the stack of relative $G$-bundles over $\Mgn$, for an arbitrary connected algebraic group $G$.

Moreover, we show the following auxiliary results:
\begin{itemize}
    \item We give a formula for the tautological line bundles on $\J_{g,n}^\chi$ in Proposition \ref{P:taut} in terms of the generators of part \eqref{T:thmB1}.
    %\item We determine the relatively ample cone for $J_{g,n}^\chi/\Mgn$ in Corollary \ref{C:amplecJ}.
    \item We determine the boundary divisors of $\ov \J_{g,n}(\sigma)$ and of $\ov J_{g,n}(\sigma)$ in Proposition \ref{P:bound-div}. 
\end{itemize}

\subsectionnotoc{Sketch of the proof of Theorem \ref{T:thmA}}

We can now give a brief sketch of the proof of Theorem \ref{T:thmA}. First of all, the if part follows from the fact that classical compactified universal Jacobian spaces are polarized over $\Mbargn$ by the Esteves's line bundle, as discussed above. Moreover, the only if part \eqref{T:thmA2} follows from the only if part \eqref{T:thmA1} and the results of \cite[Sec. 5]{FPV3} (which we review in Subsection \ref{Sub:equiv-cUJ}), which describes when two compactified universal Jacobian stacks have associated spaces that are isomorphic over $\Mbargn$. In order to prove the only if part of part \eqref{T:thmA1}, we argue as follows. 
Consider a compactified universal Jacobian space $f(\sigma):\ov J_{g,n}(\sigma)\to \Mbargn$ which is projective over $\Mbargn$ and pick a relatively ample line bundle $\A$. We construct a classical compactified universal Jacobian space $f^E:\ov J_{g,n}^E\to \Mbargn$ such that:
\begin{enumerate}[(a)]
    \item there is an isomorphism  
    $${\ov J_{g,n}(\sigma)}_{|\Mbargn^{\leq 1}}=:\ov J_{g,n}(\sigma)^{\leq 1}\cong (\ov J_{g,n}^E)^{\leq 1}:= {\ov J_{g,n}^E}_{|\Mbargn^{\leq 1}},$$
    where $ \Mbargn^{\leq 1}$ is the open big substack of $\Mbargn$ consisting of stable curves having at most one node; 
    \item the restriction of $\A$ to $\ov J_{g,n}(\sigma)^{\leq 1}$ is isomorphic to the restriction of the Esteves' line bundle $\L_E$ to 
   $(\ov J_{g,n}^E)^{\leq 1}$, up to pull-back of line bundles from $\Mbarg^{\leq 1}$.
\end{enumerate}
We then conclude using that, since $ \Mbargn^{\leq 1}$ is an open big substack of $\Mbargn$, there exists $M\gg 0$ such that 
$$
\ov J_{g,n}(\sigma)=\Proj_{\Mbargn}\bigoplus_{k\in \NN}   f(\sigma)_*(\A^{\otimes kM})\cong 
\Proj_{\Mbargn}\bigoplus_{k\in \NN}   f^E_*(\L_{E}^{\otimes kM}) \cong \ov J_{g,n}^E. 
$$
Let us comment on the proofs of properties (a) and (b).
Property (a) is proved using the results of \cite[Sec. 5]{FPV3} (which we review in Subsection \ref{Sub:equiv-cUJ}), which describes the behavior of compactified universal Jacobians over the generic point of each boundary divisors of $\Mbargn$. Property (b) is proved using Theorem \ref{T:thmB}: the crucial fact is that the the restrictions of the Esteves' line bundles $\L_E$ to the universal Jacobian spaces $J_{g,n}^\chi$ (which we compute in Proposition \ref{P:for-pol}) exhaust, as $E$ varies, the entire relative ample cone of $J_{g,n}^\chi/\Mgn$.

\subsectionnotoc{Open Questions}

While this paper solves completely the problem of the projectivity of universal compactified Jacobian spaces over $\Mbargn$, the method used cannot be applied to the study of the projectivity of non-classical relative compactified Jacobian spaces for other families of curves, expecially the V-compactified Jacobians introduced in \cite{FPV1}. Thus the following problem (raised already in \cite{viviani2023new} and \cite{FPV1}) remains open: 

$\bullet$ Are there examples of relative V-compactified Jacobian spaces for some family $X/S$ of reduced curves (e.g. a fixed curve $X$ over a field $k=\ov k$), which are relatively projective and non-classical?

\vspace{0.2cm}
\subsectionnotoc{Outline of the paper}
   Section \ref{Sec:cuJ} recalls the combinatorial classification of compactified universal Jacobians in terms of V-functions, obtained in \cite{FPV3}. Moreover, in Subsection \ref{Sub:classcJ} we recall the definition of the subclass of classical compactified universal Jacobians and in Subsection \ref{Sub:equiv-cUJ} we recall the results of \cite{FPV3} describing when two compactified universal Jacobian stacks or spaces are isomorphic over $\Mbargn$.

    Section \ref{Sec:Pic-cJ} focuses on relative Picard groups of (compactified) universal Jacobians:  in Subsection \ref{Sub:Pic-J} we compute the relative Picard group of the universal Jacobians  $\J_{g,n}^\chi$ and $J_{g,n}^{\chi}$ over $\mathcal{M}_{g,n}$; in Subsection \ref{Sub:Pic-cJ} we extend these results to universal compactified Jacobian stacks $\overline{\mathcal{J}}_{g,n}(\sigma)$ and spaces $\overline{J}_{g,n}(\sigma)$ over $\overline{\mathcal{M}}_{g,n}$.
    
    In Section \ref{Sec:proj-cJ}, we first compute the class of the natural polarization on classical universal compactified Jacobian spaces (see Theorem \ref{T:polar} and Proposition \ref{P:for-pol})
    and then we prove the main Theorem \ref{T:thmA}.

\subsectionnotoc{Acknowledgements}

This paper owes its existence to Sam Molcho, who first raised (together with Y. Bae) the question of the relative projectivity of classical compactified Jacobian spaces (which was then solved in \cite{FPV1} and \cite{FPV3}), and then kept asking the author to compute the class of the natural polarization on classical compactified universal Jacobians (see Proposition \ref{P:for-pol}).

We thank M. Fava and N. Pagani for the recent collaborations \cite{FPV1},\cite{FPV2},\cite{FPV3}, on which this paper heavily relies, and M. Melo and R. Fringuelli for the collaborations \cite{MV14} and \cite{FV1, FV2}, which play a key role in the proof of Theorem \ref{T:thmB}. We thank N. Pagani for comments on the first version of the paper and, in particular, for spotting an annoying sign mistake (which we have now fixed).

The author is funded by the MUR  ``Excellence Department Project'' MATH@TOV, awarded to the Department of Mathematics, University of Rome Tor Vergata, CUP CUP E83C23000330006, by the  PRIN 2022 ``Moduli Spaces and Birational Geometry''  funded by MUR,  and he is a member of  the GNSAGA section of INdAM.

\section{Compactified universal Jacobians}\label{Sec:cuJ}

Fix a pair of integers $(g,n)\in \NN^2$ such that $2g-2+n>0$. Denote by $\Mbargn$ the proper DM moduli stack of stable $n$-pointed nodal curves $X=(X,p_1,\ldots, p_n)$ of genus $g$, i.e. connected projective nodal curves of arithmetic genus $g$ with $n$ pairwise distinct marked smooth points, and by $\Mgn\subset \Mbargn$ the open substack of $n$-pointed smooth curves of genus $g$.

The aim of this section is to recall the classification of compactified universal Jacobians over  $\Mbargn$, obtained in \cite{FPV3} (based upon the works \cite{FPV1} and \cite{FPV2}). 

First of all, we recall the definition of compactified universal Jacobians.

\begin{definition}\label{D:cJ-Uni}
   A \textbf{compactified universal Jacobian stack} of characteristic $\chi$ over $\Mbargn$ (or of type $(g,n)$) is an open substack $\ov \J_{g,n}^{\chi}$ of the stack $\TF^{\chi}_{g,n}$ parametrizing pairs $(X,I)$ where $X\in \Mbargn$ and $I$ is a rank-$1$ torsion-free sheaf on $X$ (i.e. a coherent sheaf which is pure of dimension one, with support equal to $X$ and having rank $1$ on each generic point of $X$) of Euler characteristic $\chi$, admitting a relative proper good moduli space $F:\ov \J_{g,n}^\chi\xrightarrow{\Xi} \ov J_{g,n}^\chi\xrightarrow{f} \Mbargn$, called a \textbf{compactified universal Jacobian space}. The set of compactified universal Jacobian stacks forms a poset with respect to inclusion.

   A compactified universal Jacobian stack $\ov \J_{g,n}^\chi$ is called \emph{fine} if the good moduli morphism $\Xi:\ov \J_{g,n}^\chi\to \ov J_{g,n}^{\chi}$ is a $\Gm$-gerbe, i.e. if $\ov J_{g,n}^{\chi}=\ov \J_{g,n}^{\chi}\fatslash \Gm$
\end{definition}

Notice that the restriction of any compactified universal Jacobian stack $\ov \J_{g,n}^{\chi}$ over the open substack $\Mgn\subset \Mbargn$ of $n$-pointed smooth curves of genus $g$ is the \textbf{universal Jacobian stack} $\J_{g,n}^\chi$ of characteristic $\chi$, parametrizing pairs $(C,L)$ consisting of a smooth curve $C\in \Mgn$ and a line bundle $L$ on $C$ of Euler characteristic $\chi$. Similarly, the restriction of any compactified universal Jacobian space $\ov J_{g,n}^{\chi}$ over $\Mgn$ is the \textbf{universal Jacobian space} $J_{g,n}^\chi:=\J_{g,n}^\chi\fatslash \Gm$ of characteristic $\chi$.

\vspace{0.1cm}

In order to recall the classification of compactified universal Jacobians obtained in \cite{FPV3}, we need to introduce some combinatorial definitions.

The \textbf{stability domain of type $(g,n)$} is the set $\Dgn$ that parametrizes \emph{half-vine graphs} of type $(g,n)$: for any stable vine graph of type $(g,n)$, i.e. a stable graph of genus $g$ and $n$ legs with two vertices and no loops,  together with the choice of one of its two vertices, we associate the element $(e;h,A)\in \Dgn$ where $e$ is the number of edges, $h$ is the genus of the chosen vertex and $A\subseteq [n]$ is the set of legs rooted at the chosen vertex. 
 
 The set $\Dgn$ comes with two natural structures:
\begin{itemize}
\item the complement $(e;h,A)^c$ of an element $(e;h,A)\in \Dgn$ is defined to be the complementary half-vine graph. Explicitly, $(e;h,A)^c:=(e; g-h-e+1,A^c).$
\item a triangle in $\Dgn$ is a multiset (i.e. repetitions are allowed) $\Delta=[(e_1;h_1,A_1),(e_2;h_2,A_2), (e_3;h_3,A_3)]$ of $3$ elements of $\Dgn$ such that there exists a stable graph of type $(g,n)$ with three vertices $\{v_1,v_2,v_3\}$ and having no loops and at least one edge in between any pair of vertices, and such that each vertex $v_i$ has genus $h_i$, it is joined by $e_i$ edges to the other two vertices, and the legs rooted at $v_i$ are marked by the subset $A_i$.
\end{itemize}

The set $\Dgn$ describes the combinatorial type (or simply the type) of the set $\BCon(X)$ of biconnected subcurves of any $X\in \Mbargn$ (i.e. subcurves $Y$ of $X$ such that both $Y$ and its complementary subcurve $Y^c:=\ov{X-Y}$ are connected) in the following sense: for any $(X,p_i)\in \Mbargn$ there is a function
\begin{equation}\label{E:type}
    \begin{aligned}
      \type=\type_X: \BCon(X) & \longrightarrow \Dgn\\
      Y & \mapsto \type(Y):=(|Y\cap Y^c|; g(Y),\{i\in [n]\: : p_i\in Y\}).
    \end{aligned}
\end{equation}
Note that: 
\begin{itemize}
\item $\type_X(Y^c)=\type_X(Y)^c$ for any $Y\in \BCon(X)$.
\item If $X=Y_1\cup Y_2\cup Y_3$ with $Y_i\in \BCon(X)$ and without pairwise common irreducible components, then $[\type_X(Y_1),\type_X(Y_2),\type_X(Y_3)]$ is a triangle in $\Dgn$.
\end{itemize}

\begin{definition}\label{D:Sigmagn}
   Denote by $\Sigma^\chi_{g,n}$ the set of all functions (called \textbf{V-functions} of type $(g,n)$)
    \begin{align*}
        \sigma:\Dgn&\to \ZZ\\
        (e;h,A)&\mapsto \sigma(e;h,A)
    \end{align*}
    satisfying the following properties:
\begin{enumerate}
\item \label{E:condUni1} for any $(e;h,A)\in \Dgn$, we have 
\begin{equation}\label{E:sumUni}
\sigma(e;h,A)+\sigma((e;h,A)^c)-\chi
\in \{0,1\}.
\end{equation}

An element $(e;h,A)\in \Dgn$ is said to be \emph{$\sigma$-degenerate} if $\sigma(e;h,A)+\sigma((e;h,A)^c)=\chi$,
and \emph{$\sigma$-nondegenerate} otherwise.

\item  \label{E:condUni2} for any triangle $\Delta=[(e_1;h_1,A_1), (e_2;h_2,A_2), (e_3;h_3,A_3)]$ of $\Dgn$, we have that:
\begin{enumerate}
 \item if two among the elements of $\Delta$ are $\sigma$-degenerate, then so is  the third. 
            \item the following holds
            \begin{equation}\label{E:triaUni}
            \sum_{i=1}^{3}\sigma(e_i; h_i,A_i)-\chi
            \in \begin{cases}
                \{1,2\} \textup{ if $(e_i;h_i,A_i)$ is $\sigma$-nondegenerate for all $i$};\\
                \{1\} \textup{ if there exists a unique $i$ such that } \\
                \hspace{2cm} \textup{ $(e_i;h_i,A_i)$ is $\sigma$-degenerate};\\
                \{0\} \textup{ if $(e_i;h_i,A_i)$ is $\sigma$-degenerate for all $i$}.
            \end{cases}
        \end{equation}
\end{enumerate}
We say that $\sigma$ has Euler characteristic $\chi$ and we write $\chi=|\sigma|$. We set 
$$\Sigma_{g,n}:=\coprod_{\chi \in \ZZ}\Sigma_{g,n}^\chi.$$
\end{enumerate}
\end{definition}
 The \emph{degeneracy subset} of $\sigma$ is the collection
\begin{equation}\label{E:degsigma}
\D(\sigma):=\{(e;h,A)\in \Dgn: (e;h,A) \text{ is $\sigma$-degenerate}\}.
\end{equation}
We say that $\sigma$ is \emph{general}  if $\D(\sigma)=\emptyset$.

The set $\Sigma_{g,n}$ of V-functions of type $(g,n)$ come with the following poset structure 
  $$
  \sigma_1\geq \sigma_2 \Longleftrightarrow 
  \begin{sis}
  &|\sigma_1|=|\sigma_2|,\\
  & \sigma_1(e;h,A)\geq \sigma_2(e;h,A) \text{ for any } (e;h,A)\in \Dgn.\\
  \end{sis}
  $$
Note that each $\Sigma_{g,n}^{\chi}$ is a union of connected components of the poset $\Sigma_{g,n}$.

The classification of compactified universal Jacobians over $\Mbargn$ reads as follows. 

\begin{theorem}\label{T:class-cJ}(see \cite[Thm. A]{FPV3})
    There is an anti-isomorphism of posets  
$$
\begin{aligned}
\Sigma_{g,n} & \xrightarrow{\cong} \left\{\text{Compactified universal Jacobian stacks over $\Mbargn$}\right\},\\
\sigma &\mapsto \ov \J_{g,n}(\sigma):=\left\{
\begin{aligned} 
& (X,I) \in  \TF_{g,n}^{|\sigma|}: \: \chi(I_Y)\geq \sigma(\type_X(Y)) \\ 
&  \text{ for any } Y\in \BCon(X) 
\end{aligned} 
\right\},
\end{aligned}
$$
where $\displaystyle I_Y:=\frac{I_{|Y}}{\Tors(I_{|Y})}$ is the torsion-free quotient of the restriction $I_{|Y}$  (it is called the torsion-free restriction of $I$ to $Y$).

Moreover, $\sigma$ is general if and only if $\ov \J_{g,n}(\sigma)$ is fine. 
\end{theorem}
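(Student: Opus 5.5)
This classification is quoted from \cite[Thm. A]{FPV3}, so the plan is to reassemble it from the fiberwise classification of \cite{FPV1} and \cite{FPV2}, rather than to build it from nothing. There are four steps.

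\textbf{Step 1 (from stacks to fiberwise V-stabilities).} Let $\ov \J_{g,n}^\chi\subset \TF^\chi_{g,n}$ be a compactified universal Jacobian stack. For each $X\in \Mbargn$, its fiber over $X$ is an open substack of $\TF^\chi(X)$ with a proper good moduli space. By the results of \cite{FPV2}, such substacks are exactly the V-compactified Jacobians of $X$. Each one is given by a V-stability condition $\frak{s}^X$ on the biconnected subcurves $\BCon(X)$, and it cuts out the sheaves $I$ with $\chi(I_Y)\geq \frak s^X(Y)$. These fiberwise conditions satisfy a complementarity condition and a triangle condition, which are the analogues of \eqref{E:sumUni} and \eqref{E:triaUni}.

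\textbf{Step 2 (the data are invariant, hence factor through $\type_X$).} Because $\ov \J^\chi_{g,n}$ is defined over the whole moduli stack, it is stable under automorphisms of $(X,p_i)$. It is also compatible with specialization: if $X$ specializes to $X'$, then $\BCon(X)$ embeds into $\BCon(X')$ by taking closures, and openness forces $\frak s^X(Y)=\frak s^{X'}(\ov Y)$. I will argue that any two biconnected subcurves of the same type are related by a chain of automorphisms and specializations/generizations through the stratum of vine curves, whose dual graph has two vertices. This forces $\frak s^X(Y)$ to depend only on $\type_X(Y)$, so we obtain a function $\sigma:\Dgn\to\ZZ$.

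\textbf{Step 3 ($\sigma\in\Sigma_{g,n}$).} Since the type of $Y^c$ is the complement of the type of $Y$, the complementarity condition on $\frak s^X$ gives condition~(1) of Definition~\ref{D:Sigmagn}. For condition~(2), take a triangle $\Delta$ and realize it by a curve $X=Y_1\cup Y_2\cup Y_3$ with the prescribed three-vertex dual graph. The triangle condition on $\frak s^X$ then gives condition~(2).

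\textbf{Step 4 (the converse and the poset statement).} Given $\sigma\in\Sigma_{g,n}$, we define $\ov\J_{g,n}(\sigma)$ by the displayed inequalities. The triangle and complementarity conditions are local along strata, so each fiber $\frak s^X:=\sigma\circ \type_X$ is a V-stability of $X$. The inequalities are open in families, so $\ov\J_{g,n}(\sigma)$ is open in $\TF^{|\sigma|}_{g,n}$. Existence of a proper good moduli space then follows from the relative version in \cite{FPV2}: one checks $\Theta$-reductivity and $S$-completeness, together with the existence part of the valuative criterion, fiber by fiber. Steps 1--3 are inverse to this construction. The map reverses order because raising $\sigma$ makes the inequalities stronger, so the stack gets smaller. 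Finally, $\ov\J(\sigma)$ is fine exactly when no strictly semistable sheaves occur, i.e. when equality $\chi(I_Y)=\sigma(\type(Y))$ together with $\chi(I_{Y^c})=\sigma(\type(Y^c))$ is impossible. By \eqref{E:sumUni}, that happens exactly when $\D(\sigma)=\emptyset$.

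\textbf{Main obstacle.} I expect the hardest step to be Step 2, showing that fiberwise stabilities glue to a function of the type alone. This needs a connectivity argument: subcurves of the same type must be joined through the strata of $\Mbargn$ in a compatible way. I would attack it first by reducing to vine curves, via specialization to curves with maximally many components.
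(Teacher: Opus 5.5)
Your outline matches the argument behind \cite[Thm.~A]{FPV3}, which the paper only cites: identify each fibre with a V-compactified Jacobian via \cite{FPV1,FPV2}, use generization to vine curves (plus monodromy along the irreducible vine strata) to show that the fibrewise stability values depend only on $\type_X(Y)$, and obtain the converse, the order reversal and the fine/general equivalence from the family version of V-compactified Jacobians together with the elementary checks you describe. Two points need tightening: in Step~1, quote the identification in the form that applies to compactified Jacobians of $X$ that extend over a smoothing of $X$ (automatic here, since $\Mbargn$ is \'etale-locally the versal deformation of $X$); and in Step~2, openness alone only gives inequalities in one direction between the values on $Y, Y^c$ and on their specializations, which still allows a jump by one when $Y$ is degenerate, so you also need universal closedness, for instance by taking limits of the decomposable sheaves $I_Y\oplus I_{Y^c}$ on a vine curve, which remain non-free at the specialized nodes.
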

The bijection between the set of \emph{fine} compactified Jacobians and the set of \emph{general} V-functions was shown by Fava in \cite[Thm. A]{fava2024}, building upon the results of \cite{pagani2023stability} and \cite{viviani2023new}. The compactified universal Jacobian space  associated to $\ov \J_{g,n}(\sigma)$ will be denoted by $\ov J_{g,n}(\sigma)$.

Let us end this Section with a result on the singularities of compactified universal Jacobian stacks/spaces, which are deduced from \cite{CMKV1,CMKV2,CMKV3}. Some of these results will be used in Section \ref{Sec:Pic-cJ}.

\begin{proposition}\label{P:singu}
  Let $\sigma \in \Sigma_{g,n}$. Then we have that:
\begin{enumerate}[(i)]
    \item \label{P:singu1} $\ov \J_{g,n}(\sigma)$ is regular.
    \item \label{P:singu2} $\ov J_{g,n}(\sigma)$ has toric (hence rational and Cohen-Macaulay) terminal Gorenstein singularities. 
    \item \label{P:singu3} A closed geometric point $(X,I)$ of $\ov \J_{g,n}(\sigma)$ has image contained in the regular locus of $\ov J_{g,n}(\sigma)$ if and only if the graph $\Gamma_{(X,I)}$, obtained from the dual graph of $X$ by contracting the edges at which $I$ is free, is tree-like, i.e. it becomes a tree after removing all the loops. 
\end{enumerate}
\end{proposition}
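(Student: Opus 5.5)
The plan is to reduce everything to the local structure of $\ov \J_{g,n}(\sigma)$ and $\ov J_{g,n}(\sigma)$ at closed points. These local structures are described by the deformation theory of pairs $(X,I)$, as in \cite{CMKV1,CMKV2,CMKV3}, together with the Luna étale slice theorem for good moduli spaces.

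First, fix a closed geometric point $(X,I)$ of $\ov \J_{g,n}(\sigma)$. Let $\Sigma$ be the set of nodes of $X$ at which $I$ is not free. The deformation space of $(X,I)$ is formally smooth over the deformation space of $(X,p_i)$, and the latter is smooth since $\Mbargn$ is a smooth DM stack. Locally at each node $q\in \Sigma$, the deformations of the pair are governed by the versal deformation of a node together with a non-free rank-1 torsion-free module $\mathfrak m_q$. This deformation space is known to be smooth: it is the total space $\{xy=t\}$ with the simultaneous deformation, and its local ring is regular, e.g. $k[[u,v]]$ with $t=uv$. Hence the stack $\TF_{g,n}^\chi$ is smooth over $\Spec \ZZ$, in particular regular. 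Since $\ov \J_{g,n}(\sigma)$ is open in it, part \eqref{P:singu1} follows.

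For \eqref{P:singu2} and \eqref{P:singu3}, the key step is the slice description. The stabilizer of a polystable point $(X,I)$ is a torus $T=\Gm^{\gamma}$, where $\gamma$ is the number of connected components of the partial normalization of $X$ at $\Sigma$. By Luna/Alper's étale slice theorem, the good moduli space is, étale locally at the image of $(X,I)$, the GIT quotient of the miniversal deformation space by $T$. Concretely, this is the product of a smooth factor with $\Spec k[u_q,v_q : q\in\Sigma]^{T}$, where $T$ acts on $u_q$ and $v_q$ with opposite characters determined by the edge $q$ of the graph $\Gamma_{(X,I)}$. That graph has vertices the components of the partial normalization and edges $\Sigma$. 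This quotient is the affine toric variety associated to the cographic (cut) configuration of $\Gamma_{(X,I)}$, which is exactly the situation analysed in \cite{CMKV1,CMKV2}. There it is shown that such quotients are toric, Gorenstein (the characters of $u_q v_q$ sum to zero, so the canonical module is trivial), and terminal. Rationality and the Cohen–Macaulay property follow from Hochster's theorem for normal toric varieties.

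For \eqref{P:singu3}, the quotient is smooth iff the cut lattice is unimodular and the cone is simplicial. Loops contribute invariants $u_qv_q$ freely and so do not affect smoothness. For a tree, the invariants factor as a polynomial ring, since each edge is a bridge and gives the free invariant $u_qv_q$. Conversely, a cycle of length at least $2$ produces the extra invariants $\prod u_q$ and $\prod v_q$ along the cycle, satisfying a binomial relation, so the quotient is singular.

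The main obstacle is the identification of the formal local structure via the slice theorem over the non-fine locus, which needs the stabilizer to be linearly reductive. Here the stabilizer is a torus, so this holds. I would import this step from \cite{CMKV1,CMKV3} rather than reprove it.
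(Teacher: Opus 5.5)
Your proposal is correct and follows essentially the paper's route: (i) comes from the formal smoothness of the deformation space of $(X,I)$, and (ii)--(iii) come from identifying the completed local ring of $\ov J_{g,n}(\sigma)$ at a polystable point with a power series ring over the torus-invariant cographic toric ring $U(\Gamma_{(X,I)})$; the paper cites \cite{CMKV2,CMKV3} for that identification and then \cite{CMKV3} for the toric/terminal/Gorenstein property and the tree-like criterion, which you instead partly sketch yourself. Two slips to fix. First, the deformation space of $(X,I)$ is \emph{not} formally smooth over that of $(X,p_i)$ when $I$ is non-free at some node (locally it is $t\mapsto uv$); it is formally smooth over the product of the local deformation spaces of the pairs (node, stalk of $I$), and those are smooth, as you go on to say. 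Second, for a loop edge $q$ the torus acts trivially, so both $u_q$ and $v_q$ are invariants and the loop contributes a factor $k[u_q,v_q]$, not just $u_qv_q$.
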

\begin{proof}
Part \eqref{P:singu1} follows from the fact that the universal deformation space of a geometric point $(X,I)$ of $\ov \J_{g,n}(\sigma)$ is formally smooth (see \cite[Sec. 3]{CMKV2} or \cite[Thm. 8.3(i)]{FPV1}).  

In order to prove the remaining statements, let $[(X,I)]$ be a geometric point of $\ov J_{g,n}(\sigma)$ and assume (without loss of generality) that $(X,I)$  is a closed geometric point of $\ov \J_{g,n}(\sigma)$ (which is equivalent to the fact that $I$ is $\sigma$-polystable, see \cite[Prop. 6.17]{FPV1}). Then it follows from the arguments of \cite[Thm. A(2)]{CMKV2} (see also \cite[Sec. 8.1]{CMKV3}) that the completion of the local ring of $\ov J_{g,n}(\sigma)$ at $[(X,I)]$ is a power series ring over the cographic toric ring $U(\Gamma_{(X,I)})$ (defined in \cite[Sec. 3]{CMKV3}).
Now, part \eqref{P:singu2} follows from \cite[Thm. 5.5] {CMKV3} while part \eqref{P:singu3} follows from \cite[Prop. 5.7]{CMKV3}.
\end{proof}

\subsection{Classical compactified universal Jacobians}\label{Sub:classcJ}

The aim of this subsection is to recall from \cite[Sec. 4]{FPV3} the description of a subclass of compactified universal Jacobians, called classical compactified universal Jacobians, which generalizes \cite{Kass_2019} and \cite{Melo15} (see also \cite{Melo17} and \cite{melo2019}) from the fine case to the general case.

Denote by $\pi:\Cbargn\to \Mbargn$ the \emph{universal family}  which comes equipped with $n$ pairwise disjoint sections $\{\sigma_i\}_{i=1}^n$. We will denote by $\pi:\Cgn\to\Mgn$ the restriction of the universal family over $\Mgn$. 

\begin{theorem}(Weak Franchetta conjecture)\label{T:RPic-Cbar}
The relative Picard group 
$$\RelPic(\Cbargn)=\RelPic(\Cbargn/\Mbargn):=\frac{\Pic(\Cbargn)}{\pi^*\Pic(\Mbargn)}$$
is  generated by:
\begin{itemize}
    \item the relative dualizing line bundle $\omega_\pi$; 
    \item the image $\Sigma_i:=\Im(\sigma_i)$ of the $i$-th section of $\Cbargn/\Mbargn$ (for all $1\leq i \leq n$);
    \item the boundary line bundles $\{\O(C_{(h,A)})\}_{(h,A)\in \Bgn}$ on $\Cbargn$, where  \begin{equation*}
        \Bgn:=\{(h,A)\: :  0\leq h\leq g, A\subseteq [n], 2h-2+|A|>0, 2g-2h+|A^c|>0\}, 
        \end{equation*}
    and $C_{(h,A)}$ is the divisor of $\Cbargn$ whose generic point is a curve made of two smooth irreducible components $C_1$ and $C_2$ of genera, respectively, $h$ and $g-h$, meeting at a node, and containing the marked points $p_i$ such that, respectively, $i\in A$ or $i\in A^c$, and in such a way that if $(h,n)\neq (\frac{g}{2},0)$ then the tautological point lies on $C_1$;
\end{itemize}
subject to the following relations:
\begin{itemize}
    \item $\O(C_{(h,A)})+\O(C_{(h,A)^c})=0$ where $(h,A)^c:=(g-h,A^c)$; 
    \item if $g=1$ then $\omega_\pi=0$;
    \item if $g=0$ then $\Sigma_1=\ldots=\Sigma_n$ and $\omega_\pi=-2\Sigma_1$.
\end{itemize}
\end{theorem}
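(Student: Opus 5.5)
This result is essentially the weak Franchetta conjecture combined with the description of boundary divisors of the universal curve, so the plan is to derive it from known results rather than from scratch. First, restrict to the open part $\Cgn\to\Mgn$. The weak Franchetta conjecture (Harer, Arbarello--Cornalba; for the pointed case, Arbarello--Cornalba's computation of $\Pic(\M_{g,n+1})$ together with the identification $\Cgn\cong\M_{g,n+1}$) says the following. The relative Picard group $\Pic(\Cgn)/\pi^*\Pic(\Mgn)$ is generated by $\omega_\pi$ and the sections $\Sigma_i$. The only relations come from low genus:
\begin{itemize}
\item for $g=1$, $\omega_\pi$ is pulled back from the base (it is $\pi^*\lambda$);
\item for $g=0$, every fiber is $\mathbb{P}^1$, so $\Sigma_i-\Sigma_j$ and $\omega_\pi+2\Sigma_1$ have degree zero on fibers, hence are pulled back.
\end{itemize}
Torsion-freeness of the result (no further relations) is checked by computing relative degrees and, for $g\geq 2$, by the independence statement in Arbarello--Cornalba.

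Second, pass to $\Cbargn$. Since $\Cbargn$ is regular (it is isomorphic to $\ov\M_{g,n+1}$), line bundles correspond to divisor classes. So $\Pic(\Cbargn)\to\Pic(\Cgn)$ is surjective with kernel generated by the irreducible components of $\pi^{-1}(\partial\Mbargn)$. Every component of $\pi^{-1}(D)$, for $D$ a boundary divisor of $\Mbargn$, lies in $\pi^*\Pic(\Mbargn)$ modulo the other components over $D$.
\begin{itemize}
\item Over $\Delta_{\irr}$, and over divisors $\Delta_{h,A}$ with $(h,A)=(h,A)^c$-symmetric cases, the preimage is irreducible and hence is a pullback.
\item Over $\Delta_{(h,A)}$ with two distinct sides, $\pi^{-1}\Delta_{(h,A)}=C_{(h,A)}+C_{(h,A)^c}$, and this sum is a pullback. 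This yields the relation $\O(C_{(h,A)})+\O(C_{(h,A)^c})=0$.
\end{itemize}
The index set $\Bgn$ records exactly these boundary divisors together with the choice of the component carrying the tautological point.

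The main obstacle is showing there are no further relations: no nontrivial combination of $\omega_\pi$, the $\Sigma_i$ and one $C_{(h,A)}$ from each complementary pair is a pullback. I would test any such combination against fiber curves. Restricting to a general fiber kills the $C$'s and reduces to the open case above. To isolate the coefficient of $C_{(h,A)}$, take a generic curve of $\Delta_{(h,A)}$ and intersect with each of its two components $C_1,C_2$. A pulled-back class has degree $0$ on each of them. Meanwhile $C_{(h,A)}$ has degree $-1$ on $C_1$ and $+1$ on $C_2$, since it is the component swept out by $C_1$, which has self-intersection $-1$ in the fiber surface. The classes $\omega_\pi$ and $\Sigma_i$ have degrees $2h-1$ (resp. $2(g-h)-1$) and $[i\in A]$ on the two components. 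This separates the coefficients. The computation is the same as the Arbarello--Cornalba one showing that boundary classes in $\ov\M_{g,n+1}$ are independent of the pullbacks, and I would cite it rather than redo it.
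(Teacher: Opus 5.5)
Your generation step is sound for every $g$. It uses excision along $\pi^{-1}(\partial\Mbargn)$: $\pi^{-1}(\Delta_{\irr})$ and, in the symmetric case, $\pi^{-1}(\Delta_{g/2,\emptyset})$ are irreducible and reduced, and otherwise $\pi^*\O(\Delta_{h,A})=\O(C_{(h,A)})+\O(C_{(h,A)^c})$. For $g\ge 2$ your test-curve independence argument is also sound, and together they give a genuinely different route from the paper's. The paper simply identifies $\Cbargn\cong\ov\M_{g,n+1}$. It then reads off the cokernel of $\pi^*\colon\Pic(\Mbargn)\to\Pic(\ov\M_{g,n+1})$ from the known presentations (Arbarello--Cornalba in characteristic $0$, Fringuelli--Viviani in general). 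The dictionary is $\Sigma_i=\delta_{0,\{i,n+1\}}$, $C_{(h,A)}=\Delta_{h,A\cup\{n+1\}}$ and $\omega_\pi=\psi_{n+1}-\sum_i\delta_{0,\{i,n+1\}}$. Your version explains geometrically where the generators and the relation $C+C^c=0$ come from, and needs only the open statement plus degrees on fiber components. The global presentation produces every relation, boundary terms included, at once.

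The gap is in low genus. You prove ``$\omega_\pi=0$'' ($g=1$) and ``$\Sigma_1=\dots=\Sigma_n$, $\omega_\pi=-2\Sigma_1$'' ($g=0$) only on $\Cgn$, and then carry them to $\Cbargn$ without argument. By the excision sequence you use yourself, a class that is a pullback on $\Cgn$ is, on $\Cbargn$, only a pullback \emph{plus a combination of boundary components}. Your own degree count shows this combination need not vanish.

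Take $g=1$, $n\ge 2$, and a general curve of $\Delta_{(0,A)}$ with $|A|\ge 2$. On its rational component $C_1$, $\omega_\pi$ has degree $2\cdot 0-1=-1$, while every class in $\pi^*\Pic(\Mbargn)$ has degree $0$. So $\omega_\pi\neq 0$ in $\RelPic(\ov\C_{1,n})$. In fact $\omega_\pi\equiv\sum_{A\subseteq[n],\,|A|\ge 2}\O(C_{(0,A)})$. This also follows from $\omega_\pi=\psi_{n+1}-\sum_i\delta_{0,\{i,n+1\}}$ together with $\psi_{n+1}=\lambda+\sum_{n+1\in S,\,|S|\ge 2}\delta_{0,S}$ on $\ov\M_{1,n+1}$.

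Similarly, for $g=0$ and $n\ge 4$, $\Sigma_i-\Sigma_j$ has degree $\pm 1$ on the two components of a general curve of $\Delta_{(0,A)}$ with $i\in A$, $j\notin A$. Likewise $\omega_\pi+2\Sigma_1$ has degree $\pm1$ there when $1\in A$. Hence for $g\le 1$ the listed relations do not hold in $\RelPic(\Cbargn)$ whenever separating boundary divisors exist ($g=1$, $n\ge2$; $g=0$, $n\ge4$). They must be replaced by relations with explicit boundary correction terms. Your test-curve method would compute those terms, but the proposal skips this and asserts the open-part relations verbatim.

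The symmetric case $n=0$, $g$ even, also needs attention. Your correct remark that $C_{(g/2,\emptyset)}=\pi^{-1}(\Delta_{g/2})$ is a pullback gives $\O(C_{(g/2,\emptyset)})=0$. That is stronger than what the listed relation gives for $(h,A)=(h,A)^c$, namely $2\,\O(C_{(g/2,\emptyset)})=0$. So your claim that no combination of $\omega_\pi$, the $\Sigma_i$ and one $C$ from each complementary pair is a pullback is false in that case. The extra relation has to be recorded, and then no $2$-torsion survives.

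A minor point: $\Cgn$ is not $\M_{g,n+1}$; rather $\M_{g,n+1}=\Cgn\setminus\bigcup_i\Sigma_i$. Since $\Cgn$ is smooth, this only adds the $\Sigma_i$ as generators, so it is harmless.
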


In particular, $\RelPic(\Cbargn/\Mbargn)$ is torsion-free unless $n=0$ and $g$ is even, in which case $\O(C_{(g/2,\emptyset)})$ is a $2$-torsion element that generates the torsion subgroup of $\RelPic(\Cbargn)$. 

The above Theorem follows (as explained in \cite[Fact 1]{Kass_2019}) from the identification $\Cbargn\cong \ov\M_{g,n+1}$ and the computation of $\Pic(\Mbargn)$ (see \cite{AC87, AC96} in characteristic zero and \cite{FV0} in arbitrary characteristic).

We now set 
$$
\begin{sis}
%&  \RelPic(\Cbargn):=\Pic(\Cbargn)/\pi^*\Pic(\Mbargn),\\
& \RelPic(\Cbargn)_{\RR}:=\RelPic(\Cbargn)\otimes_{\ZZ}\RR,\\
& \RelPic^{\chi}(\Cbargn)_{\RR}:=\{L\in\RelPic(\Cbargn)_{\RR}\: : \deg_{\pi}(L)=\chi\} \text{ for any } \chi\in \ZZ,\\
& \RelPic_{g,n}^{\ZZ}(\Cbargn)_{\RR}:=\coprod_{\chi \in \ZZ} \RelPic^\chi(\Cbargn)_{\RR},
\end{sis}
$$
where $\deg_{\pi}(L)$ is the $\pi$-relative degree of $L$. Similar definition can be given by replacing $\RR$ with $\QQ$. 

From the  description of $\RelPic(\Cbargn)$ in Theorem \ref{T:RPic-Cbar}, it follows that 
\begin{equation*}
   \RelPic^{\ZZ}(\Cbargn)_{\RR}=\left\{L
    =\beta\omega_{\pi}+\sum_{i=1}^n \alpha_i\Sigma_i+\sum_{(h,A)\in \Bgn}\gamma_{(h,A)}\O(C_{(h,A)})\: : \deg_{\pi}(L)=(2g-2)\beta+\sum_{i=1}^n \alpha_i\in \ZZ\right\}.
\end{equation*}

We have a map
 \begin{equation}\label{E:map-sigma2}
 \begin{aligned}
 \sigma_-:  \RelPic_{g,n}^{\ZZ}(\Cbargn)_{\RR}  & \rightarrow \Sigma_{g,n}\\
   L & \mapsto \sigma_L(e;h,A):=
   \begin{cases}
\lceil \beta(2h-2+1)+\sum_{i\in A} \alpha_i- \gamma_{(h,A)}+\gamma_{(h,A)^c} \rceil & \text{ if } e=1,\\
\lceil \beta(2h-2+e)+\sum_{i\in A} \alpha_i \rceil& \text{ if } e\geq 2,
\end{cases}
   \end{aligned}
 \end{equation}
 such that $|\sigma_L|=\deg_{\pi}(L)$ and whose fibers are the regions of $\RelPic_{g,n}^{\ZZ}(\Cbargn)_{\RR}$ with respect to the following arrangement of hyperplanes 
 \begin{equation}\label{E:arr-hyp}
\begin{aligned} 
\A_{g,n}:=& \bigcup_{\substack{(1;h,A)\in \Dgn \\ k\in \ZZ}}\Bigg\{(2h-2+1)\omega_{\pi}^{\vee}+\sum_{i\in A}\Sigma_i^{\vee}+\O(C_{(h,A)})^{\vee}=k\Bigg\}\\
 &\bigcup_{\substack{(e;h,A)\in \Dgn \text{ with } e\geq 2\\ m\in \ZZ}}\Bigg\{(2h-2+e)\omega_{\pi}^\vee+\sum_{i\in A}\Sigma_i^\vee=m\Bigg\},
\end{aligned} 
\end{equation} 
where $(-)^\vee\in \RelPic_{g,n}(\RR)^\vee$ denotes the functional dual to a certain element. 
 Hence, the map $\sigma_{-}$ induces an order-preserving embedding 
 \begin{equation}\label{E:sigma-pos}
 \begin{aligned}
 \sigma_{-}: \R_{g,n}:=\left\{
   \begin{aligned}
   & \text{Regions of } \RelPic_{g,n}^{\ZZ}(\Cbargn)_{\RR}\\
   &\text{ with respect to } \A_{g,n}
   \end{aligned}\right\} & \hookrightarrow \Sigma_{g,n}\\
   [L] & \mapsto \sigma_{[L]}:=\sigma_L,
   \end{aligned}
 \end{equation}
where $[L]$ denotes the region of $\RelPic_{g,n}^{\ZZ}(\Cbargn)_{\RR}$ containing $L$. Observe that:
\begin{itemize}
\item $\sigma_{[L]}$ is general if and only if $[L]$ is a chamber, i.e. a maximal dimensional region. 
\item Any region of $\R_{g,n}$ contains a rational representative, i.e. it can be written as $[L]$ for some $L\in \RelPic_{g,n}^{\ZZ}(\Cbargn)_{\QQ}$.
\end{itemize}

The V-functions of type $(g,n)$ that are in the image of the map $\sigma_-$  are called \emph{classical}. It is shown in \cite[Thm. 3.9]{fava2024} (see also \cite[Fact 4.5]{FPV3}) that the map $\sigma_-$ is not surjective if and only if $g,n>0$ and $2g+n \geq 8$.

We can now define the classical compactified universal Jacobians over $\Mbargn$.

\begin{definition}\label{D:classcJ}
 The \textbf{classical compactified universal Jacobian stacks} of type $(g,n)$ are the compactified universal Jacobian stacks.
$$
 \ov \J_{g,n}([L]):=\ov \J_{g,n}(\sigma_{[L]})=
   \left\{
\begin{aligned} 
& (X,I) \in  \TF_{g,n}^{\deg_{\pi}(L)}: \: \chi(I_Y)\geq \deg_Y(L_{|X}) \\ 
&  \text{ for any } Y\in \BCon(X)
\end{aligned} 
\right\} \quad \text{ as } [L] \text{ varies in } \R_{g,n}.
$$
Their associated good moduli spaces $\ov J_{g,n}([L])$ \textbf{classical compactified universal Jacobian spaces}. 
\end{definition}

\subsection{Isomorphisms among compactified universal Jacobians}\label{Sub:equiv-cUJ}

The aim of this subsection is to recall from \cite[Sec. 5]{FPV3} when two compactified universal Jacobian stacks or spaces are isomorphic over $\Mbargn$.

Consider the following group 
\begin{equation*}
    \wPR_{g,n}:=\RelPic(\Cbargn)\rtimes (\ZZ/2\ZZ),
\end{equation*}
where $ \ZZ/2\ZZ$ acts on $\RelPic(\Cbargn)$ by mapping a line bundle to its inverse.

The group $\wPR_{g,n}$ acts on the stack $\TF_{g,n}$ in the following way (see \cite[Prop. 5.3]{FPV3})
  \begin{itemize}
    \item an element $L\in \RelPic(\Cbargn)$ acts by sending $\I\in \TF_{g,n}$ to 
    $$L\cdot \I:=\I\otimes L.$$
     \item the generator $\iota$ of $\Z/2\Z$  acts by sending $\I\in \TF_{g,n}$ to 
     $$\iota \cdot \I:=\I^*:={\mathcal Hom}(\I,\omega_{\Cbargn/\Mbargn}).$$
    \end{itemize}
 Moreover, the action  of $\wPR_{g,n}$ permutes the compactified universal Jacobian stacks in such a way that the bijection of Theorem \ref{T:class-cJ} becomes $\wPR_{g,n}$-equivariant with respect to the action of $\wPR_{g,n}$  on $\Sigma_{g,n}$ given by (see \cite[Prop. 5.3]{FPV3}):
\begin{itemize}
    \item an element $L=\beta\omega_{\pi}+\sum_{i=1}^n \alpha_i\Sigma_i+\sum \gamma_{(h,A)}\O(C_{(h,A)})\in \RelPic(\Cbargn)$ acts by 
$$(L\cdot \sigma)(e;h,A):=\sigma(e;h,A)+
    \begin{cases}
 \beta(2h-2+1)+\sum_{i\in A} \alpha_i- \gamma_{(h,A)}+\gamma_{(h,A)^c} & \text{ if } e=1,\\
 \beta(2h-2+e)+\sum_{i\in A} \alpha_i& \text{ if } e\geq 2;
\end{cases}$$
    \item the generator $\iota$ of $\Z/2\Z$ acts by 
    $$(\iota \cdot \sigma)(e;h,A):=
     \begin{cases}
      -\sigma(e;h,A) & \text{ if } (e;h,A)\in \D(\sigma), \\
-\sigma(e;h,A)+1 & \text{ if } (e;h,A)\not \in \D(\sigma).    
     \end{cases}$$
\end{itemize}

Furthermore, the group $\wPR_{g,n}$ acts naturally on $\RelPic^{\ZZ}(\Cbargn)_{\RR}$:  the elements of $\RelPic(\Cbargn)$ act via translations and $\iota$ acts as the inverse. With respect to this action, both the maps \eqref{E:map-sigma2} and the one in Theorem \ref{T:class-cJ} are equivariant. In particular, the action of $\wPR_{g,n}$ on compactified Jacobian stacks preserve the subset of classical compactified Jacobian stacks.

We will also need a decomposition of the poset $\Sigma_{g,n}$ of  V-functions of type $(g,n)$ into a separating and a non-separating part.  First of all, we can partition the stability domain $\Dgn$  into a separating and a non-separating domain
$$
\Dgn=\Dgns\bigsqcup \Dgnns,
$$
where 
\begin{equation*}
  \begin{aligned}
& \Dgns:=\{(1;h,A): (1; h,A)\in \Dgn\} \\
&  \Dgnns:= \{(e;h,A): (e; h,A)\in \Dgn \text{ and } e\geq 2\}.
  \end{aligned}  
\end{equation*}
Note that the above partition is stable under the complement operation $(-)\mapsto (-)^c$ and that each triangle in $\Dgn$ is entirely contained in $\Dgnns$.

Then we can define the poset $\Sigmans$ (resp. $\Sigmas$) of \emph{non-separating} (resp. \emph{separating}) V-function of type $(g,n)$ as the set of functions from $\Dgnns$ (resp. $\Dgns$) to $\Z$ satisfying the same properties as in the above definition of V-functions and endowed with the same order relation. We therefore get an isomorphism of posets (see \cite[Lemma 5.9]{FPV3})
\begin{equation}\label{E:decSigma}
\begin{aligned}
    \Sigma_{g,n} & \xrightarrow{\cong}  {}\Sigmas_{g,n}\times \Sigmans_{g,n}\\
    \sigma&\mapsto (\sigma^s:=\sigma_{|\Dgns},\sigma^{ns}:=\sigma_{|\Dgnns}).
\end{aligned}
\end{equation}

The action of $\wPR_{g,n}$ on $\Sigma_{g,n}$ preserves the above decomposition and its restriction to $\Sigmans_{g,n}$ factors via the quotient (see \cite[Lemma 5.12]{FPV3})
$$
\PR_{g,n}:=\wPR_{g,n}/\langle \O(C_{(h,A)})\rangle_{(h,A)}\cong \RelPic(\Cgn)\rtimes (\ZZ/2\ZZ),
$$
where $\RelPic(\Cgn)$ is the relative Picard group of the universal curve $\C_{g,n}/\M_{g,n}$.

We collect the behavior of classical V-functions with respect to the action of $\wPR_{g,n}$ and the decomposition \eqref{E:decSigma} in the following
\begin{remark}\label{R:prop-class}
\noindent
\begin{enumerate}[(i)]
\item The group $\wPR_{g,n}$ acts naturally on $\RelPic^{\ZZ}(\Cbargn)_{\RR}$:  the elements of $\RelPic(\Cbargn)$ act via translations and $\iota$ acts as the inverse. Moreover, this action preservee the arrangement of hyperplanes $\A_{g,n}$, and hence it acts on the set of regions $\R_{g,n}$ in such a way that 
the map $\sigma_-$ of \eqref{E:sigma-pos} is equivariant (see \cite[Rmk. 5.2]{FPV3}).

 In particular, the action of $\wPR_{g,n}$ on compactified Jacobian stacks preserve the subset of classical compactified Jacobian stacks. 
        \item We say that an element $\sigma\in \Sigmas_{g,n}$ (resp. $\sigma\in \Sigmans_{g,n}$) is classical if $\sigma=\sigma_L^s$ (resp. $\sigma=\sigma_L^{ns}$) for some $L\in \RelPic^{\ZZ}(\Cbargn)_{\RR}$. Then we have that (see \cite[Remark 5.10]{FPV3}):
        \begin{itemize}
        \item any $\sigma\in \Sigmas_{g,n}$ is classical;
        \item any $\sigma\in \Sigma_{g,n}$ is classical if and only if $\sigma^{ns}\in \Sigmans_{g,n}$.
        \end{itemize}
        \item The action of $\PR_{g,n}$ on $\Sigmans_{g,n}$ preserves the classical non-separating V-functions.
    \end{enumerate}
\end{remark}

We can now state the main result of this subsection.

\begin{theorem}\label{T:iso-cJ}
Let $\sigma_1,\sigma_2\in \Sigma_{g,n}$.
 \begin{enumerate}
\item Then $\ov \J_{g,n}(\sigma_1)$ and $\ov \J_{g,n}(\sigma_2)$ are isomorphic over $\Mbargn$ if and only if $\sigma_1$ and $\sigma_2$ lie in the same orbit for the action of $\wPR_{g,n}$ on $\Sigma_{g,n}$.
\item The following conditions are equivalent:
     \begin{enumerate}
         \item 
         the V-functions $\sigma_1^{ns}$ and $\sigma_2^{ns}$ lie in the same orbit for the action of $\PR_{g,n}$ on $\Sigmans_{g,n}$.
         \item  the compactified universal Jacobian stacks $\ov \J_{g,n}(\sigma_1)$ and $\ov \J_{g,n}(\sigma_2)$ are isomorphic over the open locus $\Mbargn^{ns}\subset \Mbargn$ parametrizing curves with no separating nodes.
         \item  the relative good moduli spaces $\ov J_{g,n}(\sigma_1)$ and $\ov J_{g,n}(\sigma_2)$ are isomorphic over $\Mbargn$.
          \item  the relative good moduli spaces $\ov J_{g,n}(\sigma_1)$ and $\ov J_{g,n}(\sigma_2)$ are isomorphic over $\Mbargn^{ns}$.
     \end{enumerate}
 \end{enumerate}
\end{theorem}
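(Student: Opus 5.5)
The plan is to treat the two parts separately. In each part I would first prove the "if" directions by exhibiting explicit isomorphisms, and then prove the "only if" directions by rigidity over the locus of smooth curves together with an analysis over the generic points of the boundary divisors of $\Mbargn$.

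For part (1), the "if" direction is essentially formal.
\begin{itemize}
\item Tensoring with $L\in \RelPic(\Cbargn)$ and taking $\I\mapsto \I^*$ are automorphisms of $\TF_{g,n}$ over $\Mbargn$.
\item By the description of the $\wPR_{g,n}$-action recalled before the statement, they send $\ov \J_{g,n}(\sigma)$ onto $\ov \J_{g,n}(L\cdot \sigma)$ and $\ov \J_{g,n}(\iota\cdot\sigma)$ respectively.
\item The check is a local computation on torsion-free restrictions: $\chi((I\otimes L)_Y)=\chi(I_Y)+\deg_Y L$, and Serre duality on biconnected subcurves gives $\chi((I^*)_Y)=-\chi(I_{Y^c})+(\text{number of nodes in } Y\cap Y^c \text{ where } I \text{ is not free})$. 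The last term accounts for the shift by $0$ or $1$ according to whether the type is $\sigma$-degenerate.
\end{itemize}
For the "only if" direction, take an isomorphism $\Phi:\ov \J_{g,n}(\sigma_1)\cong \ov \J_{g,n}(\sigma_2)$ over $\Mbargn$ and restrict it to $\J^{\chi}_{g,n}$ over $\Mgn$.
\begin{enumerate}
\item I would first show that the automorphisms of $\J_{g,n}/\Mgn$ (equivalently of $J_{g,n}/\Mgn$, compatibly with the $\Gm$-gerbe) are exactly the elements of $\RelPic(\Cgn)\rtimes \ZZ/2\ZZ$. Over the generic curve, a relative automorphism of a torsor under the Jacobian is a translation composed with $\pm 1$, since the generic curve has only the automorphisms $\pm1$ on its Jacobian. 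The relative translations come from $\RelPic$ of the universal curve by the weak Franchetta statement (Theorem~\ref{T:RPic-Cbar} restricted to $\Mgn$).
\item Composing $\Phi$ with a lift of this element to $\wPR_{g,n}$, I reduce to the case where $\Phi$ restricts to the identity over $\Mgn$.
\item Since both stacks are open in $\TF_{g,n}$, they are regular with separated good moduli spaces, and $\Mgn$ is dense. Hence $\Phi$ agrees with the identity of $\TF_{g,n}$ up to twisting by line bundles supported on the boundary. Those line bundles are exactly the $\O(C_{(h,A)})$ together with pullbacks from the base.
\item Therefore $\sigma_2=L\cdot\sigma_1$ with $L$ in the subgroup generated by the $\O(C_{(h,A)})$, which proves the claim.
\end{enumerate}

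For part (2), the implications (b)$\Rightarrow$(d) and (c)$\Rightarrow$(d) are trivial, so the work lies in (a)$\Rightarrow$(b), (a)$\Rightarrow$(c) and (d)$\Rightarrow$(a).
\begin{itemize}
\item \emph{(a)$\Rightarrow$(b).} Over $\Mbargn^{ns}$ the divisors $C_{(h,A)}$ are absent, so the action of $\wPR_{g,n}$ factors through $\PR_{g,n}$. Part (1), applied over this locus, gives the isomorphism of stacks; only the values on $\Dgnns$ matter, since separating types never occur there.
\item \emph{(a)$\Rightarrow$(c).} Using the decomposition \eqref{E:decSigma}, I first move $\sigma_1$ by $\wPR_{g,n}$ so that $\sigma_1^{ns}=\sigma_2^{ns}$. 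It then remains to show that $\ov J_{g,n}(\sigma)$ depends only on $\sigma^{ns}$. At a curve with a separating node $Y\cap Y^c$, every $\sigma$-polystable sheaf is locally free at that node. Its datum is the pair $(I_Y,I_{Y^c})$, and changing $\sigma^s$ only shifts the pair $(\chi(I_Y),\chi(I_{Y^c}))$. Tensoring with a suitable combination of $\O(C_{(h,A)})$ realizes this shift, and the resulting isomorphisms glue to an isomorphism of good moduli spaces over $\Mbargn$ that is the identity over $\Mbargn^{ns}$.
\item \emph{(d)$\Rightarrow$(a).} I would run the rigidity argument of part (1) over $\Mbargn^{ns}$. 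I would then read off $\sigma^{ns}$ from the structure of the space over the generic points of the non-separating boundary strata and over the curves with three components that realize the triangles.
\end{itemize}

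The main obstacle is the extension step in the "only if" directions. One must show that an isomorphism which is trivial over $\Mgn$ is a boundary twist, and, for spaces, that it recovers the non-separating V-function. This requires controlling how a birational isomorphism of the non-smooth spaces $\ov J_{g,n}(\sigma)$ behaves over boundary divisors. The approach I would try first is to compare the fibers of both spaces over the generic point of each boundary divisor. These fibers are explicit compactified Jacobians of one-nodal curves, and their strata determine $\sigma^{ns}$ on the relevant types. The triangle conditions should then pin down the remaining values.
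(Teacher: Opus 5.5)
Your overall architecture is the natural one: the $\wPR_{g,n}$-action for the ``if'' directions, and normalizing to the identity over $\Mgn$ for the ``only if'' directions. The paper itself only quotes this theorem from \cite[Sec.~5]{FPV3}. Part (1) ``if'', (a)$\Rightarrow$(b) and the trivial implications are fine, but two steps fail.

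\textbf{(a)$\Rightarrow$(c) fails when the separating degeneracy sets differ.} This is exactly the case that makes (c) weaker than part (1). If $(1;h,A)\in\D(\sigma)$, the $\sigma$-polystable sheaves on a curve with a separating node of that type are \emph{not} free at that node. They are $I_Y\oplus I_{Y^c}$ with $(\chi(I_Y),\chi(I_{Y^c}))=(\sigma(1;h,A),\sigma((1;h,A)^c))$, so your claim that polystable sheaves are free there is false. Moreover, every element of $\wPR_{g,n}$, in particular tensoring with $\O(C_{(h,A)})$, preserves the degeneracy set. So no twist relates $\sigma_1,\sigma_2$ with $\sigma_1^{ns}=\sigma_2^{ns}$ but $\D(\sigma_1^s)\neq\D(\sigma_2^s)$. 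Indeed the stacks are then not isomorphic: near the generic point of $\Delta_{h,A}$ one has points with $\Gm^2$-stabilizers and the other does not. Yet (c) asserts the spaces are isomorphic.

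The missing step is the one the paper uses in Claims 1--2 of the proof of Theorem \ref{T:RelPic-cJ}(iv), quoting the proof of (1)$\Rightarrow$(3) in \cite[Thm.~5.14]{FPV3}:
\begin{enumerate}
\item Pass to $\wh\sigma\geq\sigma$ with $\wh\sigma^{ns}=\sigma^{ns}$ and no degenerate separating types. This is possible by \cite[Prop.~8.5]{FPV3}, since separating types lie in no triangle.
\item Show the open inclusion $\ov\J_{g,n}(\wh\sigma)\subseteq\ov\J_{g,n}(\sigma)$ induces an isomorphism of good moduli spaces. A line bundle glued from $(M_1(p),M_2)$, with $\chi(M_1)=\sigma(1;h,A)$, is S-equivalent to $M_1\oplus M_2$. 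Hence the proper birational map is bijective, and it is an isomorphism because $\ov J_{g,n}(\sigma)$ is normal.
\end{enumerate}
Only after this reduction does your twisting argument apply. Also, the duality formula you quote should read $\chi((I^*)_Y)=\chi(I_{Y^c})-\chi(I)$.

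\textbf{The ``only if'' directions rest on an unproved rigidity step.} You assert that a $\Phi$ trivial over $\Mgn$ agrees with the identity of $\TF_{g,n}$ up to boundary twists. This does not follow from regularity and density; it is essentially the whole theorem. Boundary twists never change $\sigma^{ns}$, so the assertion already contains (d)$\Rightarrow$(a). Nor is it formal: over the non-free locus the universal curve is locally $xy=uv$ and $\I$ is not invertible. So boundary divisors such as the two sides over $\Delta^1_{h,A}(\sigma)$ (for $(1;h,A)\in\D(\sigma)$) are not Cartier and do not come from $\Cbargn$.

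The method you propose for this step cannot work:
\begin{itemize}
\item Over $\Mbargn^{\leq 1}$ the only biconnected subcurves have types $(1;h,A)$; an irreducible one-nodal curve has none. So the fibres over generic points of boundary divisors see only $\sigma^s$. The paper uses precisely that $\ov\J_{g,n}(\sigma)^{\leq 1}$ depends only on $\sigma^s$.
\item The values $\sigma(e;h,A)$ with $e\geq 2$ appear only over curves with at least two non-separating nodes. Even there, the abstract fibre cannot detect them: twisting by line bundles on a fixed vine curve identifies the fibres for all values of $\sigma(e;h,A)$ of a given degeneracy type.
\item The triangle conditions are constraints; they do not fix values.
\end{itemize}

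What is needed is a genuine rigidity argument for the normalized $\Phi$. For example, use separatedness of the good moduli spaces and compare limits of a fixed line bundle along different regular one-parameter smoothings of vine curves, forcing $\sigma_1^{ns}=\sigma_2^{ns}$. For part (1), add that the stack structure over the generic point of $\Delta_{h,A}$ detects whether $(1;h,A)\in\D(\sigma)$. Finally, for $n=0$ the normalization over $\Mgn$ needs the strong Franchetta conjecture (rational points of the relative Picard scheme of the generic curve), not just Theorem \ref{T:RPic-Cbar}.
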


It is shown in \cite[Prop. 5.13]{FPV3} that the action of $\wt \PR_{g,n}$ on $\Sigma_{g,n}$ has finitely many orbits. Hence, the above Theorem  implies that there are finitely many isomorphism classes of compactified universal Jacobian stacks (and spaces) over $\Mbargn$.

\section{On the Picard group of (compactified) universal Jacobians}\label{Sec:Pic-cJ}

The aim of this subsection is to compute the Picard group of the universal Jacobian stack/space and its compactifications. 

\subsection{Relative Picard group of the universal Jacobian}\label{Sub:Pic-J}

The aim of this subsection is to compute the relative Picard group of the \emph{universal Jacobian stack} over $\J_{g,n}^{\chi}/\Mgn$, parametrizing pairs $(C,L)$ consisting of an $n$-pointed smooth curve $C$ of genus $g$ together with a line bundle $L$ on $C$ of characteristic $\chi$, and of the \emph{universal Jacobian space} $J_{g,n}^\chi:=\J_{g,n}^\chi \fatslash \Gm/\Mgn$. Note that $J_{g,n}^\chi$ is proper over $\Mgn$ with fiber over $C\in \Mgn$ equal to the Jacobian $J_C^\chi$ of $C$ of characteristic $\chi$.

The universal curve over $\J_{g,n}^\chi$ is $\pi:\Cgn\times_{\Mgn} \J_{g,n}^\chi\to \J_{g,n}^\chi$ and it comes equipped with a universal line bundle $\L$ and with $n$ pairwise disjoint sections $\{\sigma_i\}_{i=1}^n$.

We now recall (see \cite[Chap. XIII, Sec. 4, 5]{Arbarello2011GeometryOA} and the references therein) two standard ways of producing line bundles on the base of a family $\pi:\C\to S$ of smooth curves.
%over an algebraic stack $S$.  

Given a coherent sheaf $\mathcal F$ on $\C$ flat over $S$ (for example a locally free sheaf), we associate a line bundle $d_{\pi}(\mathcal F)=d(\F)$ over the base $S$, called the \emph{determinant of cohomology of $\mathcal F$ with respect to $\pi$} and defined as it follows: choose a complex of locally free sheaves of finite rank $f:K_0\to K_1$ such that $\ker(f)\cong \pi_*(\cal F)$ and $\coker(f)\cong R^1\pi_*(\cal F)$ (this is always possible), and define
\begin{equation}\label{D:detcoho}
d_{\pi}(\cal F):=\det \pi_*(K_0)\otimes \det \pi_*(K_1)^{-1}\in \Pic(\cal X). 
\end{equation}	
The determinant of cohomology is functorial with respect to base change, and it is multiplicative in short exact sequences and compatible with Serre's duality, i.e.:
	\begin{equation}\label{detprop}
	\begin{sis}
	& d_{\pi}(\cal G)= d_{\pi}(\cal F)\otimes d_{\pi}(\cal E) \quad \text{ for any short exact sequence } 0\to\mathcal F\to \mathcal G\to\mathcal E\to 0, \\
	& d_{\pi}(\mathcal F\otimes\omega_\pi)= d_{\pi}(\cal F^{\vee}) \quad \text{ if } \cal F \: \text{  is locally free.} 
	\end{sis}
	\end{equation}
	Moreover, if we have two line bundles $\mathcal F$ and $\mathcal G$ on $S$, we can produce a line bundle $\langle \mathcal F,\mathcal G\rangle_{\pi}=\langle \mathcal F,\mathcal G\rangle$ over $S$, called the \emph{Deligne pairing of $\mathcal F$ and $\mathcal G$ with respect to $\pi$}, which is related to the determinant of cohomology via the following isomorphism:
	\begin{equation}\label{Del-pair}
		\def\arraystretch{1.5}\begin{array}{l}
	\langle \mathcal F,\mathcal G\rangle_{\pi}= d_{\pi}(\mathcal F\otimes\mathcal G)\otimes d_{\pi}(\mathcal F)^{-1}\otimes d_{\pi}(\mathcal G)^{-1}\otimes d_{\pi}(\mathcal O).
	\end{array}
	\end{equation}
	The Deligne pairing is symmetric, bilinear and compatible with sections, i.e.
	\begin{equation}\label{Del-pair-comp}
	\begin{sis}
	& \langle \mathcal F,\mathcal G\rangle_{\pi}= \langle \mathcal G,\mathcal F\rangle_{\pi},\\
	& \langle \mathcal F_1\otimes\mathcal F_2,\mathcal G\rangle_{\pi}=\langle \mathcal F_1,\mathcal G\rangle_{\pi}\otimes\langle \mathcal F_2,\mathcal G\rangle_{\pi},\\
	& \langle\O(\sigma),\cal F\rangle_{\pi}=\sigma^*(\cal F),\text{ where }\sigma\text{ is a section of }\pi.
	\end{sis}
    \end{equation}
    The first Chern classes of the  Deligne pairing is given by 
    \begin{equation}\label{E:c1}
	 c_1(\langle \mathcal F,\mathcal G\rangle_{\pi})=\pi_*(c_1(\mathcal F)\cdot c_1(\mathcal G)).
    \end{equation}	

\begin{remark}
The behavior of the determinant of cohomology and of the Deligne pairing with respect to the relative dualization sheaf $\omega_{\pi}$ (or powers of it) is described in the following relations (which hold for any line bundle $\cal F$ on $\C$ and for any $n\in \Z$):
\begin{equation}\label{E:tau-dual}
\begin{sis}
& \langle \cal F, \omega_{\pi}\rangle_{\pi}= d_{\pi}(\cal F)^{-1}\otimes d_{\pi}(\cal F^{-1})=\langle \cal F, \cal F\rangle_{\pi} \otimes d_{\pi}(\cal F)^{-2}\otimes d_{\pi}(\O)^2, \\
& d_{\pi}(\omega_{\pi}^n\otimes  \cal F)= d_{\pi}(\cal F)^{1-2n}\otimes \langle \cal F, \cal F\rangle_{\pi}^n\otimes d_{\pi}(\cal O)^{6n^2-4n},
\end{sis}
\end{equation}
where the first formula is obtained by applying \eqref{Del-pair} twice, one to the pair $(\cal F, \omega_{\pi})$ and the other to the pair $(\cal F, \cal F^{-1})$, and the second formula is obtained by applying  \eqref{Del-pair} to $\langle \cal F, \omega_{\pi}^n\rangle_{\pi}$ and  using Mumford's formula $d_{\pi}(\omega_{\pi}^n)\cong d_{\pi}(\cal O)^{6n^2-6n+1}$  (see \cite[Chap. XIII, Thm. 7.6]{Arbarello2011GeometryOA}).
\end{remark}
    
In our setting, we will apply the above two constructions to the universal family $\pi:\Cgn\times_{\Mgn} \J_{g,n}^\chi\to \J_{g,n}^\chi$ using the universal line bundle $\L$, the relative dualizing sheaf $\omega_{\pi}$ and  the line bundles $\O(\sigma_i)$ associated to the sections $\sigma_i$: we end up with the following \emph{tautological line bundles}
\begin{equation}\label{E:taut}
\begin{sis}
  & d\left(\L^n\otimes \omega_{\pi}^m(\sum_{i=1}^n\mu_i\sigma_i)\right) \text{ for any } n,m,\mu_i\in \ZZ,\\
  &\left\langle \L^n\otimes \omega_{\pi}^m(\sum_{i=1}^n\mu_i\sigma_i), \L^{n'}\otimes \omega_{\pi}^{m'}(\sum_{i=1}^n\mu_i'\sigma_i)\right\rangle \text{ for any } n,m,\mu_i, n',m',\mu_i'\in \ZZ.
\end{sis}
\end{equation}
It turns out that the tautological line bundles generate the relative Picard group of $\J_{g,n}^\chi/\Mgn$ and more precisely we have the following result.

\begin{theorem}(Fringuelli-Viviani \cite{FV1}) \label{T:RPicJ}
The relative Picard group 
$$\RelPic(\J_{g,n}^\chi)=\RelPic(\J_{g,n}^\chi/\Mgn):=\frac{\Pic(\J_{g,n}^\chi)}{\Pic(\Mgn)}$$ 
is generated by
$$
d(\L), \quad \langle \L, \L \rangle, \quad \xi_i:=\langle \L, \O(\sigma_i)\rangle=\sigma_i^*(\L) \text{ for } 1\leq i \leq n,
$$
subject to the following relations:
\begin{itemize}
    \item if $g=1$ then $\langle \L, \L \rangle\equiv d(\L)^2$; 
    \item if $g=0$ then $\xi:=\xi_1=\ldots \xi_n$, $d(\L)=\chi \xi$ and $\langle \L, \L\rangle= 2(\chi-1)\xi$.
\end{itemize}
\end{theorem}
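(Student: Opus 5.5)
Since the statement is attributed to Fringuelli--Viviani, the plan is to deduce it from their general computation of the relative Picard group of the stack $\mathrm{Bun}_{G,g,n}$ of $G$-bundles over $\Mgn$, specialized to $G=\Gm$. In that case $\mathrm{Bun}_{\Gm,g,n}=\coprod_\chi \J_{g,n}^\chi$. I will nevertheless organize the argument so that the structure is visible. The first step is to use the fibration $\J_{g,n}^\chi\to J_{g,n}^\chi\to \Mgn$. The fibre of $J^\chi_{g,n}$ over a point $C$ is a torsor under the Jacobian $J_C$. The relative Picard group of $\J_{g,n}^\chi/\Mgn$ then sits in an exact sequence
\[
0\to \RelPic(J_{g,n}^\chi/\Mgn)\to \RelPic(\J_{g,n}^\chi/\Mgn)\xrightarrow{w} \ZZ,
\]
where $w$ is the weight of the scalar $\Gm$-action. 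The task therefore reduces to computing $\RelPic(J^\chi_{g,n}/\Mgn)$, which I identify with the group of sections over $\Mgn$ of the relative Picard scheme $\Pic_{J^\chi_{g,n}/\Mgn}$.

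The second step describes that group. For a torsor $P$ under an abelian scheme $A=J_{g,n}/\Mgn$, the relative Picard functor is an extension of the relative Néron--Severi group by $\Pic^0=A^\vee\cong A$ (Jacobians are principally polarized). The contribution of the sections of $\Pic^0$ is the Mordell--Weil part of the universal Jacobian, which over $\Mgn$ is generated by the classes $\O(\sigma_i-\sigma_j)$ and $\omega-(2g-2)\sigma_i$. This is the strong Franchetta conjecture (Mestrano, Kouvidakis, Schröer). For very general curves $\NS(J_C)=\ZZ\cdot\theta$, so the Néron--Severi part is generated by the multiples of $\theta$ that descend to the torsor. Concretely, I will check that:
\begin{itemize}
\item the tautological bundles $d(\L)$, $\langle\L,\L\rangle$ and $\xi_i$ have weights $\chi$, $2$ and $1$ respectively;
\item $\langle \L,\L\rangle\otimes d(\L)^{-2}$ restricts to $2\theta$ (or $\theta$, with sign conventions) on fibres, as in \eqref{E:tau-dual};
\item the weight-zero combinations $\xi_i-\xi_j$ realize the Mordell--Weil classes via the theorem of the square.
\end{itemize}
Together these show that $d(\L)$, $\langle\L,\L\rangle$ and the $\xi_i$ generate.

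The third step proves the relations and the absence of others. For $g=0$, the stack $\J^\chi$ is $B\Gm\times\Mgn$ twisted by a single line bundle $\O(\chi-1)$. Every line bundle of degree $\chi-1$ is $\O((\chi-1)\sigma_1)$ up to pullback, which yields $\xi_i=\xi$ and the stated formulas via Riemann--Roch and $\langle \O(\sigma),\O(\sigma)\rangle=-\xi$-type self-intersection computations on $\mathbb P^1$. For $g=1$, $\omega_\pi$ is trivial relative to the base, so $\langle\L,\omega\rangle=0$, and \eqref{E:tau-dual} gives $\langle\L,\L\rangle=d(\L)^2$ modulo $\Pic(\Mgn)$. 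For $g\ge 2$, independence follows by computing the weight, the fibrewise Néron--Severi class and the Mordell--Weil class of an arbitrary combination, and observing that these invariants detect all coefficients.

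The main obstacle is the Mordell--Weil input for $g\ge2$, namely that the only sections of $J_{g,n}/\Mgn$ are those generated by the marked points and the canonical class. This is exactly the hard theorem underlying \cite{FV1}, and I would quote it rather than reprove it.
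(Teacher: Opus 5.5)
Your headline route, specializing \cite{FV1} to $G=\Gm$, is exactly what the paper does. Its proof is nothing but the citation of \cite[Thm.~0.4.1]{FV1} (for $g\geq 1$) and \cite[Thm.~0.4.2]{FV1} (for $g=0$) with $T=\Gm$, and as a citation your argument is fine. The trouble is the verification you add to make the structure visible: your first two checks are wrong and the third is incomplete.

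The weight of $\langle\L,\L\rangle$ is $2(\chi+g-1)=2\deg\L$, not $2$. Rescaling $\L$ by $\lambda$ multiplies $\langle\L,\mathcal{M}\rangle$ by $\lambda^{\deg\mathcal{M}}$, and the pairing is symmetric. Next, by \eqref{E:tau-dual} one has $\langle\L,\L\rangle-2d(\L)\equiv\langle\L,\omega_\pi\rangle$. On a fibre $\J^\chi_C$ this is a tensor combination of restrictions of $\L$ to $\{p\}\times\J^\chi_C$, so its N\'eron--Severi class is $0$, not $\pm2[\Theta_C]$. The correct values are $d(\L)\mapsto-[\Theta_C]$, $\langle\L,\L\rangle\mapsto-2[\Theta_C]$ and $\xi_i\mapsto 0$ (see the proof of Corollary~\ref{C:amplecJ}). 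Your own relations already exclude your values. The relations $\langle\L,\L\rangle\equiv 2d(\L)$ for $g=1$ and $\langle\L,\L\rangle=2(\chi-1)\xi$ for $g=0$ force the weight $2(\chi+g-1)$. The $g=1$ relation also says that $\langle\L,\L\rangle-2d(\L)$ is trivial, so it cannot restrict to a nonzero multiple of $[\Theta_C]$.

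These slips break the generation argument. In the correct picture the theta direction is carried by $d(\L)$. The Mordell--Weil section $\omega_\pi-(2g-2)\sigma_i$ is realized by the weight-zero class $\langle\L,\L\rangle-2d(\L)-(2g-2)\xi_i$. Indeed, for $L=\omega_\pi^m(\sum_i\mu_i\sigma_i)$ of relative degree $0$, Proposition~\ref{P:taut} gives $t_L^*d(\L)-d(\L)\equiv-2m\,d(\L)+m\langle\L,\L\rangle+\sum_i\mu_i\xi_i$. The differences $\xi_i-\xi_j$ only produce the rank-$(n-1)$ part spanned by the classes $\O(\sigma_i-\sigma_j)$. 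With your assignments the $\omega_\pi$-direction of the Mordell--Weil group is never reached, so your rank count does not close.

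Step one also hides two genuine inputs.
\begin{enumerate}
\item Your reduction to $\RelPic(J^\chi_{g,n})$ ignores the image of the weight map. For $n=0$ you must prove that all weights lie in $\gcd(\chi,2g-2)\ZZ$, which contains the non-existence of a Poincar\'e bundle when $\gcd(\chi,2g-2)>1$.
\item Identifying $\RelPic(J^\chi_{g,n})$ with the sections of the relative Picard scheme is not justified when $J^\chi_{g,n}\to\Mgn$ has no section, as happens for $n=0$ in general. There the comparison map is only injective, with cokernel contained in $H^2(\Mgn,\Gm)$. In any case, for $n=0$ you must show that exactly the multiples $\frac{2g-2}{\gcd(2g-2,\chi)}[\Theta]$ of the theta class occur.
\end{enumerate}
These are precisely the inputs that \cite{FV1} supplies. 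Either quote it outright, as the paper does, or correct the weights and N\'eron--Severi classes and supply these two arguments.
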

In particular, $\RelPic(\J_{g,n}^\chi)$ is a free abelian group of rank equal to 
$$
\begin{cases}
 n+2 & \text{ if } g\geq 2;\\
 n+1 & \text{ if } g=1;\\
 1 & \text{ if } g=0.
\end{cases}
$$
The special case $n=0$ (and $g\geq 2$) follows from \cite[Thm. A(1)]{MV14}.
\begin{proof}
This follows for $g\geq 1$ from \cite[Thm. 0.4.1]{FV1}  applied to $T=\Gm$, and for $g=0$ from  \cite[Thm. 0.4.2]{FV1}  applied to $T=\Gm$.
\end{proof}

We now can express the tautological line bundles \eqref{E:taut} in terms of the bases of $\RelPic(\J_{g,n}^\chi/\Mgn)$ provided by the above Theorem \ref{T:RPicJ}. In the following, we will denote by $\equiv$ an equality of line bundles in the relative Picard group of $\J_{g,n}^\chi/\Mgn$ (as opposed to an equality of line bundles on $\J_{g,n}^\chi$) and we will adopt an additive notation for the line bundles.

\begin{proposition}\label{P:taut}
The tautological line bundles \eqref{E:taut} are given by
$$d\left(\L^n\otimes \omega_{\pi}^m(\sum_{i=1}^n\mu_i\sigma_i)\right)\equiv n(1-2m)d(\L)+\left[\binom{n}{2}+nm \right]\langle \L,\L\rangle+n\sum_i \mu_i\xi_i, 
$$
$$\begin{aligned}
\left\langle \L^n\otimes \omega_{\pi}^m(\sum_{i=1}^n\mu_i\sigma_i), \L^{n'}\otimes \omega_{\pi}^{m'}(\sum_{i=1}^n\mu_i'\sigma_i)\right\rangle & \equiv
-2(nm'+mn')d(\L)+(nn'+nm'+mn')\langle \L,\L\rangle+\\
& +\sum_i (\mu_in'+n\mu_i')\xi_i. 
\end{aligned}
$$
\end{proposition}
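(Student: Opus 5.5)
The plan is to compute everything in $\RelPic(\J_{g,n}^\chi/\Mgn)$, using only the formal properties \eqref{detprop}, \eqref{Del-pair}, \eqref{Del-pair-comp} and \eqref{E:tau-dual}. The key observation is that any tautological line bundle built only from $\omega_\pi$ and the $\O(\sigma_i)$ is pulled back from $\Mgn$. This is because the universal curve $\Cgn\times_{\Mgn}\J_{g,n}^\chi$, together with $\omega_\pi$ and the sections, is pulled back from $\Cgn\to\Mgn$, and both $d_\pi$ and $\langle-,-\rangle_\pi$ commute with base change. Hence $d(\O)$, $d(\omega_\pi^m(\sum\mu_i\sigma_i))$, $\langle\omega_\pi,\omega_\pi\rangle$, $\langle\omega_\pi,\O(\sigma_i)\rangle$ and $\langle\O(\sigma_i),\O(\sigma_j)\rangle$ are all $\equiv 0$.

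I would prove the Deligne pairing formula first. By bilinearity and symmetry \eqref{Del-pair-comp}, the pairing $\langle \L^n\otimes\omega_\pi^m(\sum\mu_i\sigma_i),\L^{n'}\otimes\omega_\pi^{m'}(\sum\mu'_i\sigma_i)\rangle$ expands as
$$nn'\langle\L,\L\rangle+(nm'+mn')\langle\L,\omega_\pi\rangle+\sum_i(n\mu'_i+\mu_in')\langle\L,\O(\sigma_i)\rangle,$$
plus terms involving only $\omega_\pi$ and the $\sigma_i$, which are $\equiv0$. The first relation of \eqref{E:tau-dual} gives $\langle\L,\omega_\pi\rangle\equiv\langle\L,\L\rangle-2d(\L)$, since $d(\O)^2\equiv0$. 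Substituting this, and using $\langle \L,\O(\sigma_i)\rangle=\xi_i$, yields exactly the stated formula.

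For the determinant formula, I would first show $d(\L^n)\equiv n\,d(\L)+\binom{n}{2}\langle\L,\L\rangle$ for all $n\in\ZZ$. Applying \eqref{Del-pair} to the pair $(\L^k,\L)$ gives
$$d(\L^{k+1})=\langle\L^k,\L\rangle+d(\L^k)+d(\L)-d(\O)\equiv k\langle\L,\L\rangle+d(\L^k)+d(\L).$$
This recursion, run upward and downward from $d(\L^0)=d(\O)\equiv 0$, gives the claim for every integer $n$, because $\binom{k+1}{2}-\binom{k}{2}=k$ holds for all $k\in\ZZ$. Next, \eqref{Del-pair} applied to $F=\L^n$ and $G=\omega_\pi^m(\sum\mu_i\sigma_i)$, together with $d(G)\equiv d(\O)\equiv0$, gives $d(F\otimes G)\equiv\langle F,G\rangle+d(\L^n)$. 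The pairing $\langle F,G\rangle$ is the special case $(n,0,0;0,m,\mu)$ of the formula already proved, namely $-2nm\,d(\L)+nm\langle\L,\L\rangle+n\sum_i\mu_i\xi_i$. Adding $d(\L^n)$ then gives $n(1-2m)d(\L)+[\binom n2+nm]\langle\L,\L\rangle+n\sum_i\mu_i\xi_i$, as claimed.

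There is no real obstacle; the only point needing care is the justification that the $\L$-free terms vanish in the relative Picard group, which is the base-change argument given at the start. The relations for $g=0,1$ in Theorem \ref{T:RPicJ} play no role here. The identities above hold in $\RelPic$ for every $g$, and those relations only become relevant if one wants to rewrite the answer in a basis.
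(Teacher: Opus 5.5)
Your proposal is correct and follows essentially the same route as the paper: expand the Deligne pairing by bilinearity, use the first relation of \eqref{E:tau-dual} to replace $\langle \L,\omega_\pi\rangle$ by $\langle\L,\L\rangle-2d(\L)$, then get the determinant formula from \eqref{Del-pair} applied to $(\L^n,\omega_\pi^m(\sum_i\mu_i\sigma_i))$ together with $d(\L^n)\equiv n\,d(\L)+\binom{n}{2}\langle\L,\L\rangle$ proved by induction. Your explicit base-change argument that the $\L$-free terms vanish in $\RelPic$ and your uniform recursion over all $n\in\ZZ$ make precise what the paper leaves implicit. Your intermediate pairing $-2nm\,d(\L)+nm\langle\L,\L\rangle+n\sum_i\mu_i\xi_i$ is also the correct form of the paper's slightly garbled intermediate line.
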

\begin{proof}
Let us first prove  the second formula:
$$\begin{aligned}
& \left\langle \L^n\otimes \omega_{\pi}^m(\sum_{i=1}^n\mu_i\sigma_i), \L^{n'}\otimes \omega_{\pi}^{m'}(\sum_{i=1}^n\mu_i'\sigma_i)\right\rangle  \equiv & \\
& \equiv nn'\langle \L,\L\rangle +(nm'+mn')\langle \L,\omega_{\pi}\rangle +\sum_i (\mu_in'+n\mu_i')\xi_i\equiv & \text{ by } \eqref{Del-pair-comp} \\
& \equiv nn'\langle \L,\L\rangle +(nm'+mn')(\langle \L,\L\rangle -2d(\L)) +\sum_i (\mu_in'+n\mu_i')\xi_i & \text{ by } \eqref{E:tau-dual}\\
\end{aligned}
$$
which gives the desired formula.

In order to prove the first formula, we first compute
$$
\begin{aligned}
& d\left(\L^n\otimes \omega_{\pi}^m(\sum_{i=1}^n\mu_i\sigma_i)\right)\equiv \langle \L^n,  \omega_{\pi}^m(\sum_{i=1}^n\mu_i\sigma_i)\rangle +d(\L^n)\equiv  & \text{ by } \eqref{Del-pair}, \\
& \equiv nm' \langle \L, \L\rangle -2nm' d(\L) +d(\L^n)  & \text{ by the first formula.} 
\end{aligned}
$$
Then we conclude using the formula 
$$
d(\L^n)\equiv n d(\L)+\binom{n}{2}\langle \L, \L \rangle,$$
which is proved by induction on $|n|$ using the formulas
$$
\begin{sis}
& d(\L^n)\equiv d(\L)+d(\L^{n-1})+(n-1)\langle \L, \L \rangle \text{ for } n\geq 2 \text{ by } \eqref{Del-pair} \text{ applied to } (\L,\L^{n-1}), \\
& d(\L^{-n})\equiv d(\L^{-1})+d(\L^{-(n-1)})+(n-1)\langle \L, \L \rangle \text{ for } n\geq 2 \text{ by } \eqref{Del-pair} \text{ applied to } (\L^{-1},\L^{-(n-1)}), \\
& d(\L^{-1})\equiv -d(\L)+\langle \L, \L \rangle \text{ by } \eqref{Del-pair} \text{ applied to } (\L,\L^{-1}).
\end{sis}
$$
\end{proof}

We now describe the relative Picard group of the universal Jacobian space $J_{g,n}^\chi$.

\begin{theorem}(Fringuelli-Viviani \cite{FV2}) \label{T:RPicJ-rig}
The relative Picard group 
$$\RelPic(J_{g,n}^\chi)=\RelPic(J_{g,n}^\chi/\Mgn):=\frac{\Pic(J_{g,n}^\chi)}{\Pic(\Mgn)}$$ 
is the subgroup of  $\RelPic(\J_{g,n}^\chi/\Mgn)$ given by 
$$\RelPic(J_{g,n}^\chi)=\left\{
(2s-r)d(\L)-s\langle \L,\L\rangle-\sum_i a_i \xi_i \: : 
r\chi+s(2g-2)+\sum_i a_i=0
\right\}.$$
\end{theorem}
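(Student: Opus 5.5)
The plan is to use the standard criterion for descent along a $\Gm$-rigidification. Since $\J_{g,n}^\chi\to J_{g,n}^\chi$ is a $\Gm$-gerbe, pull-back identifies $\Pic(J_{g,n}^\chi)$ with the subgroup of line bundles on $\J_{g,n}^\chi$ on which the inertial $\Gm$ acts with weight zero. The same holds after quotienting by $\Pic(\Mgn)$, because line bundles pulled back from $\Mgn$ have weight zero. The injectivity of $\RelPic(J_{g,n}^\chi)\to \RelPic(\J_{g,n}^\chi)$ comes from the same identification. Therefore $\RelPic(J_{g,n}^\chi)$ is the kernel of the weight homomorphism $\wei:\RelPic(\J_{g,n}^\chi)\to \ZZ$, and the theorem reduces to computing $\wei$ on the generators of Theorem \ref{T:RPicJ}.

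Next I compute the weights. A scalar $\lambda\in\Gm$ acts on the universal line bundle $\L$ by multiplication by $\lambda$, so it acts on $\L^{k}$ by $\lambda^k$. It acts on $\pi_*$ and $R^1\pi_*$ of $\L$ with weight $1$, and hence on $d(\L)=\det R\pi_*\L$ with weight $\chi(\L)=\chi$ (up to a global sign convention that I will fix once).
\begin{itemize}
\item On $\xi_i=\sigma_i^*\L$ the weight is $1$.
\item For $\langle \L,\L\rangle$, formula \eqref{Del-pair} gives $\langle\L,\L\rangle=d(\L^2)-2d(\L)+d(\O)$. Its weight is therefore $2\chi(\L^2)-2\chi=2(2\chi-1+g)-2\chi=2\chi+2g-2$, using $\chi(\L^2)=\deg(\L^2)+1-g$ and $\deg \L=\chi+g-1$.
\end{itemize}
Hence
\[
\wei\Bigl((2s-r)d(\L)-s\langle\L,\L\rangle-\sum_i a_i\xi_i\Bigr)=(2s-r)\chi-s(2\chi+2g-2)-\sum_i a_i=-\Bigl(r\chi+s(2g-2)+\sum_i a_i\Bigr).
\]
Its vanishing is exactly the stated condition.

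The parametrization $(2s-r,-s,-a_i)$ covers every element of $\RelPic(\J^\chi_{g,n})$: given coefficients $(x,y,z_i)$, set $s=-y$ and $r=2s-x$. So the kernel is exactly the displayed subgroup. In the degenerate cases $g=1$ and $g=0$, the relations of Theorem \ref{T:RPicJ} must be checked to be compatible with $\wei$, which amounts to a consistency check. For $g=0$, $d(\L)=\chi\xi$ has weight $\chi$ on both sides. For $\langle\L,\L\rangle=2(\chi-1)\xi$, the weight of $\langle\L,\L\rangle$ is $2\chi-2$ when $g=0$, which matches.

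The main point requiring care is the descent criterion on the relative level. The issue is whether a weight-zero class in $\RelPic(\J)$ lifts to a weight-zero line bundle on $\J$ itself, rather than only modulo $\Pic(\Mgn)$. This is automatic because $\Pic(\Mgn)$ consists of weight-zero bundles, so any representative of a weight-zero class has weight zero. A second point is the sign convention for the weight of the determinant of cohomology. Only the kernel matters, so the sign is irrelevant. Alternatively, the whole statement can be cited as the case $G=\Gm$ of \cite{FV2}, and the computation above serves as the verification of the explicit form.
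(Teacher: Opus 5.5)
Your proposal is correct and follows the paper's proof: both identify $\RelPic(J_{g,n}^\chi)$ with the kernel of the weight homomorphism $\wei^\chi:\RelPic(\J_{g,n}^\chi)\to\ZZ$, and both evaluate it on the generators of Theorem \ref{T:RPicJ} as $\chi$, $2(\chi+g-1)$ and $1$. The only difference is that you derive these weights yourself, using Riemann--Roch and formula \eqref{Del-pair} for $\langle\L,\L\rangle$, while the paper cites \cite[Prop. 4.1.2(1)]{FV1} for them.
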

In particular, $\RelPic(J_{g,n}^\chi)$ is a free abelian group of rank equal to 
$$
\begin{cases}
 n+1 & \text{ if } g\geq 2;\\
 n & \text{ if } g=1;\\
 0 & \text{ if } g=0.
\end{cases}
$$
The special case $n=0$ (and $g\geq 2$) follows from \cite[Thm. B(1)]{MV14}.
\begin{proof}
This is a special case of the results of \cite[Sec. 5]{FV2} (for an arbitrary reductive group $G$).
For the reader's convenience, we give a direct proof in this special case.

Since $J_{g,n}^\chi=\J_{g,n}^\chi\fatslash \Gm$, the group  $\RelPic(J_{g,n}^\chi)$ is the kernel of the weight homomorphism (see \cite[Sec. 5]{FV2}) 
\begin{equation}\label{E:wei}
\wei^\chi:\RelPic(\J_{g,n}^\chi)\to \Hom(\Gm,\Gm)=\ZZ.
\end{equation}
Then the result follows from Theorem \ref{T:RPicJ} using that the weight function on the generators of $\RelPic(\J_{g,n}^\chi)$ is given by (see \cite[Prop. 4.1.2(1)]{FV1})
$$
\begin{sis}
   &  \wei^{\chi}(d(\L))=\chi, \\
   & \wei^\chi(\langle \L,\L\rangle)=2(\chi+g-1),\\
   & \wei^\chi(\xi_1)=1 \text{ for any } 1\leq i \leq n.
\end{sis}
$$
\end{proof}

As a corollary of the above result, we get a description of the relative ample cone of the projective morphism $J_{g,n}^\chi \to \Mgn$.

\begin{corollary}\label{C:amplecJ}
    The relative ample cone of the morphism $J_{g,n}^\chi \to \Mgn$ is equal to 
$$\Amp(J_{g,n}^\chi/\Mgn)=\left\{
(2s-r)d(\L)-s\langle \L,\L\rangle-\sum_i a_i \xi_i \: : 
r\chi+s(2g-2)+\sum_i a_i=0, r>0
\right\}.$$
\end{corollary}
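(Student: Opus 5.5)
Since $J_{g,n}^\chi\to\Mgn$ is a smooth projective family of torsors under the relative Jacobian $J_{g,n}^0$, relative ampleness of a line bundle $M$ can be checked fibrewise, i.e.\ on each $J_C^\chi$ for $C\in\Mgn$ geometric. On an abelian torsor, $M$ is ample if and only if its Néron–Severi class is a positive definite polarization. So the plan is to compute, for a general element
$$M=(2s-r)d(\L)-s\langle \L,\L\rangle-\sum_i a_i\xi_i,\qquad r\chi+s(2g-2)+\sum_i a_i=0,$$
its algebraic equivalence class on a fibre $J_C^\chi$. We then show that this class equals $r\theta$, where $\theta$ is the principal polarization. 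Granting this, $M$ is ample exactly when $r>0$, because $r\theta$ is ample iff $r>0$, trivial iff $r=0$, and antiample iff $r<0$. The cases $g=0$ (where the group is $0$ and $J_{g,n}^\chi\to\Mgn$ is an isomorphism, so the cone condition degenerates consistently) and $g=1$ (where the rank drops) I will handle separately by the same computation.

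The key computation goes as follows. Fix $C$ and restrict to $J_C^\chi$. Pick a universal line bundle (Poincaré bundle) $\mathcal P$ on $C\times J_C^\chi$, well defined up to pull-back from $J_C^\chi$. Since $M$ has weight zero, its restriction $M_C$ is independent of this choice, and we may compute each summand in $\NS(J_C^\chi)$ modulo the ambiguity, which cancels in the weight-zero combination.
\begin{itemize}
\item The standard facts give $d(\mathcal P)\equiv -\theta$ up to the rigidification ambiguity, since the determinant of cohomology is the theta divisor line bundle dualized.
\item Next, $\xi_i=\sigma_i^*\mathcal P$ is algebraically trivial modulo the ambiguity, because it is the restriction of $\mathcal P$ to $\{p_i\}\times J$.
\item Finally, $\langle\mathcal P,\mathcal P\rangle$ has class $\pi_*(c_1(\mathcal P)^2)$, which by the Poincaré formula is $-2\theta$ plus ambiguity terms.
\end{itemize}
Normalizing $\mathcal P$ to be trivial along $\{p\}\times J$ for a fixed point $p$ makes the ambiguity terms vanish. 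It also makes the weight constraint automatic, since the corrections are multiples of the class of the pulled-back line bundle weighted by the weights listed in the proof of Theorem~\ref{T:RPicJ-rig}. Hence $M_C\equiv (2s-r)(-\theta)-s(-2\theta)=r\theta$ in $\NS(J_C^\chi)$.

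The main obstacle is getting signs and normalizations right in these identifications: that $d(\mathcal P)$ restricts to $-\theta$ rather than $\theta$, and that $\pi_*(c_1(\mathcal P)^2)=-2\theta$. To pin both down I will test on the Esteves/theta line bundle. First, take $\L_E$ with $E=\O^{\oplus}$ twisted appropriately, or simply $d(\L\otimes N)^{-1}$ with $\deg N=-\chi+g-1$, i.e.\ the classical $\Theta$ divisor. Expanding it via Proposition~\ref{P:taut} gives an element of the stated form with $r=1$, and it is known to be ample. Second, I will check via \eqref{E:tau-dual} that $\langle\L,\omega_\pi\rangle$, which has $r=0$, is algebraically trivial on fibres. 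Together these two checks force the linear functional ``class in $\NS$'' to be $r\theta$, since by Theorem~\ref{T:RPicJ-rig} the group is spanned by such elements up to elements with $r=0$ built from $\xi_i$ and $\langle \L,\omega_\pi\rangle$.
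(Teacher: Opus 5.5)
Your proposal is correct and is essentially the paper's own proof. Both restrict fibrewise to $\NS(J_C^\chi)$, where $d(\L)\mapsto -[\Theta_C]$, $\langle \L,\L\rangle\mapsto -2[\Theta_C]$ and $\xi_i\mapsto 0$, so the class is $r[\Theta_C]$; the paper simply cites these three values from Fringuelli--Viviani and Kass--Pagani instead of deriving them from a normalized Poincar\'e bundle.

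Your auxiliary sign check needs two small fixes:
\begin{enumerate}
\item The twist $N$ should have degree $-\chi$, not $-\chi+g-1$, so that $\L\otimes N$ has Euler characteristic $0$.
\item For $n=0$ the global group need not contain an element with $r=1$, since $r$ must be a multiple of $(2g-2)/\gcd(2g-2,\chi)$. So do that check on a single fibre $J_C^\chi$, where it works as you describe.
\end{enumerate}
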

\begin{proof}
The fiber of $\J_{g,n}^\chi \to \Mgn$ over a geometric point $C\in \Mgn$ is equal to the Jacobian stack $\J_C^\chi$ of $C$ of characteristic $\chi$, which is (non-canonically) isomorphic to $J_C^\chi \times B \Gm$.   The restriction homomorphism (which is well-defined)
$$
\res^{\NS}_C: \RelPic(\J_{g,n}^\chi) \to \Pic(\J_C^\chi)\cong \Pic(J_C^\chi)\times \Pic(B\Gm)\twoheadrightarrow \NS(J_C^\chi)
$$
is given on the generators of $\RelPic(\J_{g,n}^\chi)$ by
(see \cite[Prop. 4.1.2(2)]{FV1}, \cite[Sec. 4]{FV2} and \cite[Sec. 3.3]{Kass_2017}))
$$
\res^{\NS}_C(d(\L))=-[\Theta_C]\footnote{Note that there is a sign mistake in \cite[Claim 2 in the proof of Theorem 7.2]{MV14}, where it is asserted, in our language, that $\res^{\NS}_C(d(\L))=[\Theta_C]$. However, with this sign change, the proof of loc. cit. carries over.},\quad
    \res^{\NS}_C(\langle \L,\L \rangle)=-2[\Theta_C], \quad
    \res^{\NS}_C(\xi_i)=0, $$
    where $[\Theta_C] \in \NS(J_C^\chi)$ is the class of the theta divisors of $J_C^\chi$.
Therefore, we get that 
$$
\res_C^{\NS}((2s-r)d(\L)-s\langle \L,\L\rangle-\sum_i a_i \xi_i)=r [\Theta_C],
$$
which is ample if and only if $r>0$.
\end{proof}

The above Theorem \ref{T:RPicJ-rig} can be interpreted in terms of the Jacobian of the generic curve over $\Mgn$ (whenever it exists), as we now explain \footnote{We thank N. Pagani for suggesting this connection.}.%similarly to what was done in \cite[Sec. 4.1]{Kass_2017}.

\begin{remark}\label{R:}
Assume that $g+n\geq 3$, which is equivalent to the property that $\Mgn$ is generically a scheme, and let $\eta_{g,n}$ be the generic point of $\Mgn$. Consider the generic curve and the generic Jacobian over $\Mgn$:  $$\C_{\eta_{g,n}}:=\Cbargn\times_{\Mgn} \eta_{g,n} \text{ and } J_{\eta_{g,n}}^\chi:=J_{g,n}^\chi\times_{\Mgn} \eta_{g,n}=J^\chi_{\C_{\eta_{g,n}}}.$$
    Then the restriction map induces an isomorphism  (see \cite[Prop. 2.3.2]{FV1})
    \begin{equation}\label{E:iso-res1}
    \RelPic(J_{g,n}^\chi)\xrightarrow{\cong} \Pic(J_{\eta_{g,n}}^\chi).
    \end{equation}
    Now assume that $g\geq 1$, for otherwise the above groups are trivial. Consider the following homomorphism 
    $$
    \rho:\RelPic(J_{g,n}^\chi)\xrightarrow{\cong} \Pic(J_{\eta_{g,n}}^\chi)\to \Pic(J_{\ov \eta_{g,n}}^\chi)\to \NS(J_{\ov \eta_{g,n}}^\chi)\cong \ZZ\langle [\Theta_{g,n}]\rangle,
    $$
    where $\ov \eta_{g,n}$ is a geometric point above $\eta_{g,n}$, $[\Theta_{g,n}]$ is the class in the Neron-Severi of a theta divisor on the geometric generic Jacobian $J_{\ov \eta_{g,n}}^\chi$, and the last equality follows from the arguments of \cite[Appendix B]{BKLV}. From the proof of Corollary \ref{C:amplecJ}, it follows that $\rho$ is given explicitly in terms of the generators described in Theorem \ref{T:RPicJ-rig} as:
   $$\rho((2s-r)d(\L)-s\langle \L,\L\rangle-\sum_i a_i \xi_i )=r[\Theta_{g,n}].$$
    From this formula and Theorem \ref{T:RPicJ-rig}, we deduce two consequences:
    \begin{enumerate}[(i)]
        \item The image of $\rho$, which coincides with the Neron-Severi group of $J_{\eta_{g,n}}^\chi$, is equal to 
        $$\NS(J_{\eta_{g,n}}^\chi)=\Im(\rho)=\begin{sis}
          [\Theta_{g,n}] & \text{ if } n\geq 1,\\
          \frac{2g-2}{\gcd(2g-2,\chi)}[\Theta_{g,n}] & \text{ if } n=0 \: (\text{and hence } g\geq 2).
        \end{sis}$$
        The above formula for $n=0$ was proved in \cite[Sec. 7]{MV14}.
        \item There is an isomorphism 
        $$
        \begin{aligned}
            \iota: \RelPic^0(\C_{g,n})&\xrightarrow{\cong} \ker(\rho)\subseteq \RelPic(J_{g,n}^\chi)\\
            L & \mapsto t_L^*d(\L)-d(\L),
        \end{aligned}
        $$
        where $\RelPic^0(\C_{g,n})$ is the subgroup of the relative Picard group of $\C_{g,n}/\M_{g,n}$ consisting of line bundles having relative degree $0$ and $t_L:J_{g,n}^\chi\to J_{g,n}^\chi$ is the translation by $L$.

        Indeed, Theorem \ref{T:RPic-Cbar} implies that any $L\in \RelPic^0(\C_{g,n})$ can be written as 
        $$L=\omega_{\pi}^m(\sum_i \mu_i \sigma_i) \text{ with } m(2g-2)+\sum_i \mu_i=0,$$
        and then we compute, using Proposition \ref{P:taut}, that 
        $$
        \iota(L)=d(\L\otimes \omega_{\pi}(\sum_i \mu_i \sigma_i))-d(\L)\equiv -2md(\L)+m\langle \L,\L\rangle +\sum_{i} \mu_i \xi_i\in \ker(\rho).
        $$
    \end{enumerate}
    Summing up, using the isomorphism \eqref{E:iso-res1} and the analogous one  (see again \cite[Prop. 2.3.2]{FV1})
    \begin{equation}\label{E:iso-res2}
    \RelPic(C_{g,n})\xrightarrow{\cong} \Pic(\C_{\eta_{g,n}}),
    \end{equation}
we obtain the following exact sequence 
    \begin{equation}\label{E:exa-gen}
    0\to \Pic^0(\C_{\eta_{g,n}}) \xrightarrow{\iota} \Pic(J_{\eta_{g,n}}^\chi)\xrightarrow{\rho} \NS(J_{\eta_{g,n}}^\chi)\to 0.
    \end{equation}
This exact sequence was considered in \cite[Sec. 4.1]{Kass_2017} for $\chi=0$ and $n\geq 1$, in which case 
    $\NS(J_{\eta_{g,n}}^\chi)=\ZZ[\Theta_{g,n}]$ and the sequence splits canonically since $-d(\L)\in \Pic(J_{\eta_{g,n}}^\chi)$ and $\rho(-d(\L))=[\Theta_{g,n}]$. 
\end{remark}

\subsection{Relative Picard group of a compactified universal Jacobian}\label{Sub:Pic-cJ}

The aim of this subsection is to compute the relative Picard group of a compactified universal Jacobian stack $F(\sigma):\ov \J_{g,n}(\sigma)\to \Mbargn$ (resp. space $f(\sigma):\ov J_{g,n}(\sigma)\to \Mbargn$), for any V-function $\sigma\in \Sigma_{g,n}$. 

First of all, note that the universal family $\pi:\Cbargn\times_{\Mbargn} \TF_{g,n}\to \TF_{g,n}$ comes with a universal sheaf $\I$ (whose restriction to $\Cbargn\times_{\Mbargn} \J_{g,n}$ is the universal line bundle $\L$) and with $n$ pairwise disjoint sections $\{\sigma_i\}_{i=1}^n$ which lands in the smooth locus of $\pi$. Hence, we can extend some of the tautological line bundles of \eqref{E:taut} and the line bundles $\xi_i$ of Theorem \ref{T:RPicJ} to the entire stack $\TF_{g,n}$ (and hence on any $\ov \J_{g,n}(\sigma)$) by
\begin{equation}\label{E:taut-TF}
   \begin{sis}
    & d(\I\otimes \omega_{\pi}^{m}(\sum_{i=1}^n \mu_i\sigma_i) \text{ for any } m, \mu_i \in \ZZ,\\
    & \xi_i:=\sigma_i^*(\I). 
   \end{sis} 
\end{equation}
where $d=d_{\pi}$ is, as usual, the determinant of cohomology with respect to the universal family $\pi$. 

We now describe the boundary divisors of $\ov \J_{g,n}(\sigma)$ and $\ov J_{g,n}(\sigma)$, and their relationship trough the morphism $\Xi(\sigma):\ov \J_{g,n}(\sigma)\to \ov J_{g,n}(\sigma)$.

Consider the following boundary prime divisors in $\ov \J_{g,n}(\sigma)$:
\begin{itemize}
    \item $\Delta_{\irr}(\sigma)$ is the prime divisor whose generic point is a pair $(X,L)$ where $X$ is an integral curve of $\Mbargn$ with only one node and $L$ is a line bundle on $X$ of characteristic $|\sigma|$.
    \item If $(1;h,A)\not \in \D(\sigma)$ then $\Delta_{h,A}(\sigma)$ is the prime divisor whose generic point is a pair $(X,L)$ where $X=C_1\cup C_2$ is the generic point of $\Delta_{h,A}$ and $L$ is a line bundle on $X$ such that 
    $$
(\chi(L_{C_1}),\chi(L_{C_2}))=    (\sigma(1;h,A),\sigma(1;g-h,A^c)).$$
\item If $(1;h,A) \in \D(\sigma)$ and $(1;h,A)\neq (1;h,A)^c$ then $\Delta_{h,A}^i(\sigma)$ for $i=1,2$ is the prime divisor whose generic point is a pair $(X,L)$ where $X=C_1\cup C_2$ is the generic point of $\Delta_{h,A}$ and $L$ is a line bundle on $X$ such that 
    $$
(\chi(L_{C_1}),\chi(L_{C_2}))=  
\begin{cases}
    (\sigma(1;h,A)+1,\sigma(1;g-h,A^c)) \text{ if } i=1;\\
    (\sigma(1;h,A),\sigma(1;g-h,A^c)+1) \text{ if } i=2.
\end{cases}
$$
\item If $(1;h,A) \in \D(\sigma)$ and $(1;h,A)= (1;h,A)^c$ (which happens if and only if $n=0$, $g$ and $|\sigma|$ are even and $\sigma(1;g/2,\emptyset)=|\sigma|/2$), then $\Delta_{g/2,\emptyset}^1(\sigma)=\Delta_{g/2,\emptyset}^2(\sigma)$ is the prime divisor whose generic point is a pair $(X,L)$ where $X=C_1\cup C_2$ is the generic point of $\Delta_{h,A}$ and $L$ is a line bundle on $X$ such that 
    $$
(\chi(L_{C_1}),\chi(L_{C_2}))=\left(\frac{|\sigma|}{2}+1,\frac{|\sigma|}{2}\right) \text{ or } \left(\frac{|\sigma|}{2},\frac{|\sigma|}{2}+1\right). 
$$
    \end{itemize}
    
The images of the above divisors in $\ov J_{g,n}(\sigma)$ are equal to the following boundary prime divisors:
\begin{itemize}
\item[$\star$] $\wh{\Delta}_{\irr}:=\Xi(\sigma)(\Delta_{\irr}(\sigma))$.
\item[$\star$] If $(1;h,A)\not \in \D(\sigma)$ then set $\wh{\Delta}_{h,A}(\sigma):=\Xi(\sigma)(\Delta_{h,A}(\sigma))$.
\item[$\star$] If $(1;h,A) \in \D(\sigma)$  then 
$$\wh{\Delta}_{h,A}(\sigma):=\Xi(\sigma)(\Delta^1_{h,A}(\sigma))=\Xi(\sigma)(\Delta^2_{h,A}(\sigma))$$ 
is the prime divisor whose generic point is a pair $(X,I=L_1\oplus L_2)$ where $X=C_1\cup C_2$ is the generic point of $\Delta_{h,A}$ and $L_i$ is a line bundle on $C_i$ such that 
    $$
(\chi(L_{C_1}),\chi(L_{C_2}))=      (\sigma(1;h,A),\sigma(1;g-h,A^c)). 
$$
    \end{itemize}

\begin{proposition}\label{P:bound-div}
    Let $\sigma\in \Sigma_{g,n}^\chi$.
    \begin{enumerate}
        \item \label{P:bound-div1} We have that 
      $$f(\sigma)^*(\Delta_{\irr})=\wh \Delta_{\irr}(\sigma) \text{ and } f(\sigma)^*(\Delta_{(h,A)})=\wh \Delta_{(h,A)} \text{ for any } (h,A)\in \Bgn.$$
      The above are all the boundary irreducible divisors of $\ov J_{g,n}(\sigma)$ and they are all  Cartier. 
        \item \label{P:bound-div2} We have that
        $$F(\sigma)^*(\Delta_{\irr})=\Xi(\sigma)^*(\wh \Delta_{\irr}(\sigma))=\Delta_{\irr}(\sigma),$$ 
        $$
        F(\sigma)^*(\Delta_{h,A})=\Xi(\sigma)^*(\wh \Delta_{h,A}(\sigma))=
        \begin{cases}
           \Delta_{h,A}(\sigma) & \text{ if }   (1;h,A)\not \in \D(\sigma),\\
          \Delta_{h,A}^1(\sigma)+\Delta_{h,A}^2(\sigma) & \text{ if }
           (1;h,A) \in \D(\sigma). 
        \end{cases}
        $$
        The above are all the boundary irreducible divisors of $\ov J_{g,n}(\sigma)$ and they are all  Cartier. 
    \end{enumerate}
\end{proposition}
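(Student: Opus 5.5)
The plan is to work étale-locally on $\Mbargn$ near the generic point of each boundary divisor. Since $\ov J_{g,n}(\sigma)$ and $\ov\J_{g,n}(\sigma)$ are flat over $\Mbargn$ and the fibers over points of $\Mbargn^{\leq 1}$ are reduced, it suffices to compute the pull-backs of $\Delta_{\irr}$ and $\Delta_{h,A}$ over $\Mbargn^{\leq 1}$. Both sides are divisors whose components all lie over the boundary, and the complement of $\Mbargn^{\leq 1}$ has codimension $\geq 2$; since $\ov\J_{g,n}(\sigma)$ is regular and $\ov J_{g,n}(\sigma)$ is normal (Proposition \ref{P:singu}), equality of Weil divisors can be checked over this open.

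Flatness of $F(\sigma)$ and $f(\sigma)$ (the latter from cohomological flatness of good moduli spaces, or from the local toric description) gives that $F(\sigma)^*(\Delta_\bullet)$ is the fiber over $\Delta_\bullet$. So the first step is to list its irreducible components together with their multiplicities.

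Over the generic point of $\Delta_{\irr}$, the $\sigma$-semistable sheaves on an irreducible one-nodal curve are either line bundles or sheaves failing to be free at the node. The second kind form a locus of codimension $\geq 2$ in the total space, so the only component is $\Delta_{\irr}(\sigma)$. It has multiplicity one: the local equation of the boundary in the versal deformation of $(X,I)$ at a line bundle $I$ is a smoothing parameter $t$, which is a coordinate. This uses the formal smoothness in Proposition \ref{P:singu}\eqref{P:singu1}.

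Over the generic point of $\Delta_{h,A}$, a separating node $X=C_1\cup C_2$ has $\BCon(X)=\{C_1,C_2\}$. The semistable line bundles are therefore those with $\chi(L_{C_1})\geq\sigma(1;h,A)$ and $\chi(L_{C_2})\geq \sigma(1;g-h,A^c)$.

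\begin{itemize}
\item If $(1;h,A)\notin\D(\sigma)$, the sum of the two lower bounds is $\chi-1$, so there is exactly one bidegree and we obtain $\Delta_{h,A}(\sigma)$.
\item If $(1;h,A)\in\D(\sigma)$, there are two bidegrees and we obtain $\Delta^1_{h,A}(\sigma)$ and $\Delta^2_{h,A}(\sigma)$.
\end{itemize}

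In both cases the non-free sheaf at the node is $L_1\oplus L_2$, which lives in codimension $\geq 2$. Multiplicity one follows again from the versal deformation at a line bundle, where the node-smoothing parameter is a coordinate. This gives the formulas of part \eqref{P:bound-div2} for $F(\sigma)^*$. Since $\ov\J_{g,n}(\sigma)$ is regular, these divisors are automatically Cartier. Moreover, any boundary divisor maps into a boundary divisor of $\Mbargn$, because $F(\sigma)$ restricted to $\Mgn$ is $\J^\chi_{g,n}$. This shows the list is complete.

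For the space, I would compute $f(\sigma)^*$ the same way using the local structure from Proposition \ref{P:singu}. At a polystable point over a generic point of $\Delta_{h,A}$ or $\Delta_{\irr}$, the completed local ring is a power series ring over the cographic ring of $\Gamma_{(X,I)}$.
\begin{itemize}
\item For a line bundle, $\Gamma_{(X,I)}$ is a point, so the ring is smooth and the pull-back of the boundary is reduced.
\item For $L_1\oplus L_2$ in the degenerate case, $\Gamma$ is a single edge between two vertices. This graph is tree-like, so by Proposition \ref{P:singu}\eqref{P:singu3} the point is regular. There the map to $\Mbargn$ is locally $t\mapsto t$ times smooth: the versal smoothing parameter of the separating node is invariant under the stabilizer $\Gm$.
\end{itemize}
Hence $f(\sigma)^*\Delta_\bullet$ is reduced with the single irreducible component $\wh\Delta_\bullet$. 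It is Cartier as the pull-back of a Cartier divisor, and this argument also shows $\Xi(\sigma)^*\wh\Delta=F(\sigma)^*\Delta$.

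The step I expect to be the main obstacle is this last local computation at the strictly polystable point of the degenerate separating case. There I would check explicitly, on the $\Gm$-invariants of the versal deformation space $k[[t_1,t_2,\ldots]]$ with weights $\pm1$ on the two node-branch coordinates, that the smoothing parameter $t=xy$ is a coordinate of the invariant ring, which is smooth. That identifies the local structure needed.
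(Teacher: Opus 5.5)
Your proposal is correct and is essentially the argument the paper has in mind: the paper obtains the set-theoretic description of the fibres over one-nodal curves from \cite[5.2, 5.4]{FPV2}, and the multiplicity-one and Cartier statements by repeating the local deformation-theoretic proofs of \cite[Thm. 3.2, Thm. C]{MV14}, and you carry out both steps by hand (flatness and reduction to $\Mbargn^{\leq 1}$, smoothness of $F(\sigma)$ at line bundles, and $k[[x,y]]^{\Gm}=k[[xy]]$ at the polystable points $L_1\oplus L_2$). Two small corrections: first, since $\chi(L_{C_1})+\chi(L_{C_2})=\chi+1$ for a line bundle at a separating node, in the non-degenerate case the two lower bounds sum to $\chi+1$ (not $\chi-1$), and this is what leaves a unique bidegree; second, when $(1;h,A)=(1;h,A)^c$ the two bidegrees are interchanged by monodromy over $\Delta_{g/2,\emptyset}$, so your argument gives the single prime divisor $\Delta^1_{g/2,\emptyset}(\sigma)=\Delta^2_{g/2,\emptyset}(\sigma)$ with multiplicity one, which is how the sum in the statement has to be understood in that case.
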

Note that $(1;h,A)=(1;h,A)^c=(1;g-h,A^c)$ if and only if $n=0$, $g$ is even and $(h,A)=(g/2,\emptyset)$. In this case, we have that $(1;g/2,\emptyset)\in \D(\sigma)$ if and only if $|\sigma|$ is even (see \cite[Prop. 8.5(2)]{FPV3}).
\begin{proof}
This was proved by Melo-Viviani in \cite[Thm. C, Thm. 3.2]{MV14} for the Caporaso compactified Jacobian stack $\ov \J_g^{Cap,\chi}$ (resp. space $\ov J_g^{Cap,\chi}$), in the sense of \cite[Sec. 7]{FPV3}.
We will sketch how to adapt the proof of loc. cit. to our more general setting.

First of all, using \cite[5.2, 5.4]{FPV2}, it is easily checked that we have set-theoretic equalities in all the above formulas. This also shows that all the boundary divisors of $\ov \J_{g,n}(\sigma)$ and $\ov J_{g,n}(\sigma)$ are the ones listed. 

In order to promote the set theoretic-equalities to scheme-theoretic equalities, we can repeast the proofs of \cite[Thm. 3.2, Thm. C]{MV14}, which also shows that all the boundary divisors are Cartier.  
\end{proof}

  In the following, we will denote by $\Mbargn^{\leq 1}$ the open substack of $\Mbargn$ consisting of stable curves with at most one node and, for any $\sigma\in \Sigma_{g,n}$, we set 
  $$
  \ov \J_{g,n}(\sigma)^{\leq 1}:=\ov \J_{g,n}(\sigma)_{|\Mbargn^{\leq 1}} \text{ and } \ov J_{g,n}(\sigma)^{\leq 1}:=\ov J_{g,n}(\sigma)_{|\Mbargn^{\leq 1}}.
  $$

\begin{theorem}\label{T:RelPic-cJ}
Let $\sigma\in \Sigma_{g,n}^\chi$. Consider the following commutative diagram 
\begin{equation}\label{E:diag-res}
\begin{tikzcd}
  \res: \RelPic(\ov \J_{g,n}(\sigma)/\Mbargn)  \arrow["\res^{\leq 1}", r, "\cong" swap] & \RelPic(\ov \J_{g,n}(\sigma)^{\leq 1}/\Mbargn^{\leq 1}) \arrow[r, "\res^o", twoheadrightarrow]& \RelPic(\J_{g,n}^\chi/\Mgn)   \\ 
  \wh{\res}: \RelPic(\ov J_{g,n}(\sigma)/\Mbargn)  \arrow["\wh{\res}^{\leq 1}", r, hook] \arrow[u, hook]& \RelPic(\ov J_{g,n}(\sigma)^{\leq 1}/\Mbargn^{\leq 1}) \arrow[r, "\wh{\res}^o", "\cong" swap] \arrow[u, hook]& \RelPic(J_{g,n}^\chi/\Mgn) \arrow[u, hook]  
\end{tikzcd}
\end{equation}
where the horizontal map are the restriction maps over the open substacks $\Mgn\subset \Mbargn^{\leq 1} \subset \Mbargn$ and the upper arrows are the inclusions given by pull-back along the relative good moduli space morphisms.

Then we have that:
\begin{enumerate}[(i)]
    \item  $\res^{\leq 1}$ is an isomorphism.
    \item $\res^o$ is surjetive and there is a surjection 
    \begin{equation}\label{E:ker-reso}
    \bigoplus_{(1;h,A)\in \D(\sigma)} \frac{\langle \Delta_{h,A}^1(\sigma),  \Delta_{h,A}^1(\sigma)\rangle }{\langle \Delta_{h,A}^1(\sigma)+ \Delta_{h,A}^2(\sigma)\rangle}\twoheadrightarrow \ker(\res^o).
    \end{equation}
    In particular, $\res^o$ is an isomorphism if $\D(\sigma^s)=\emptyset$.
    \item  $\wh{\res}^{o}$  is an isomorphism.
    \item  $\wh{\res}^{\leq 1}$ is injective and it is an isomorphism if $\D(\sigma^{ns})=\emptyset$.  
\end{enumerate}
In particular, if $\sigma$ is general (i.e. if $\D(\sigma)=\emptyset$) then all the horizontal arrows in \eqref{E:diag-res} are isomorphisms.
\end{theorem}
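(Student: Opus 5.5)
The argument rests on two standard facts about Picard groups on regular stacks. First, $\ov \J_{g,n}(\sigma)$ is regular by Proposition \ref{P:singu}\eqref{P:singu1}, so line bundles on it extend uniquely across closed substacks of codimension $\geq 2$. Second, restriction to a dense open substack is surjective on Picard groups, and its kernel is generated by the prime divisors in the complement.

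\textbf{Part (i).} The complement of $\ov \J_{g,n}(\sigma)^{\leq 1}$ in $\ov \J_{g,n}(\sigma)$ is $F(\sigma)^{-1}(\Mbargn\setminus \Mbargn^{\leq 1})$. This preimage has codimension $2$, because $F(\sigma)$ is flat (its fibers are equidimensional compactified Jacobians, and the source and target are regular). Hence $\Pic(\ov \J_{g,n}(\sigma))\to \Pic(\ov \J_{g,n}(\sigma)^{\leq 1})$ is an isomorphism. The same holds for $\Mbargn\supset\Mbargn^{\leq 1}$, which is smooth. Passing to the quotients by $\Pic$ of the base, we get that $\res^{\leq 1}$ is an isomorphism.

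\textbf{Part (ii).} Surjectivity of $\res^o$ follows from regularity. Alternatively, the generators of Theorem \ref{T:RPicJ} extend explicitly by \eqref{E:taut-TF}. For $\langle \L,\L\rangle$ we use \eqref{Del-pair} together with $d(\I\otimes\omega_\pi)$, so that $\langle\L,\L\rangle\equiv d(\L)^{2}\otimes d(\L\otimes \omega_\pi)\otimes\cdots$ via \eqref{E:tau-dual}.

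The kernel of $\res^o$ is generated by the boundary prime divisors of $\ov \J_{g,n}(\sigma)^{\leq 1}$, which are listed in Proposition \ref{P:bound-div}. In the relative Picard group, the pullbacks $F(\sigma)^*\Delta_{\irr}$ and $F(\sigma)^*\Delta_{h,A}$ vanish. By Proposition \ref{P:bound-div}\eqref{P:bound-div2}, $\Delta_{\irr}(\sigma)$ and every $\Delta_{h,A}(\sigma)$ with $(1;h,A)\notin\D(\sigma)$ are such pullbacks. The only divisors that remain are the $\Delta^1_{h,A}(\sigma)$ with $(1;h,A)\in\D(\sigma)$, modulo the relation $\Delta^1+\Delta^2=F^*\Delta_{h,A}\equiv 0$. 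This gives the surjection \eqref{E:ker-reso}. When $\D(\sigma^s)=\emptyset$ the left side of \eqref{E:ker-reso} vanishes.

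\textbf{Part (iii).} The space $\ov J_{g,n}(\sigma)^{\leq 1}$ is regular. Indeed, over curves with at most one node the graph $\Gamma_{(X,I)}$ has at most one edge, so it is tree-like, and Proposition \ref{P:singu}\eqref{P:singu3} applies. So $\wh\res^o$ is surjective, and its kernel is generated by the boundary divisors $\wh\Delta_{\irr}$ and $\wh\Delta_{h,A}$. By Proposition \ref{P:bound-div}\eqref{P:bound-div1} these are all pullbacks $f(\sigma)^*$ from the base, so they are zero in the relative Picard group. Hence $\wh\res^o$ is an isomorphism.

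\textbf{Part (iv).} Injectivity of $\wh\res^{\leq 1}$ follows from the diagram: $\res^{\leq 1}$ is injective and the left vertical arrow (pullback along the good moduli morphism) is injective, so the composite is injective. For surjectivity when $\D(\sigma^{ns})=\emptyset$, we use that the singular locus of $\ov J_{g,n}(\sigma)$ lies over the locus of curves with at least two nodes. The space is normal with rational singularities by Proposition \ref{P:singu}\eqref{P:singu2}, so a line bundle $M$ on $\ov J_{g,n}(\sigma)^{\leq 1}$ extends to a reflexive rank-one sheaf on $\ov J_{g,n}(\sigma)$. The issue is whether this extension is Cartier.

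The strategy for this step is to pull $M$ back to $\ov\J_{g,n}(\sigma)^{\leq 1}$, extend it uniquely to $\ov\J_{g,n}(\sigma)$ by (i), and check that the extension descends along $\Xi(\sigma)$. By Alper's descent criterion, this amounts to checking that the stabilizers at closed points act trivially on the fibres. The stabilizers of polystable points are tori $\Gm^{k}$, where the factors correspond to the components of a polystable decomposition. Their characters are determined by weights computed on the one-nodal locus. When $\D(\sigma^{ns})=\emptyset$, every splitting into components happens along separating nodes, and these are already visible on $\Mbargn^{\leq 1}$ after specialization. So one can reduce to weight computations on the tautological generators and the $\Delta^i_{h,A}$ there.

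I expect this descent/weight check to be the main obstacle. It requires showing that the characters of the torus stabilizers at deeper strata are determined by one-node data, which is presumably where non-separating degeneracies would produce obstructions.

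The final claim of the theorem is the combination of (ii) and (iv) in the case $\D(\sigma)=\emptyset$.
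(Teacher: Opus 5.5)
Parts (i)--(iii) and the injectivity in (iv) are fine and essentially follow the paper. The ingredients are the regularity of $\ov{\J}_{g,n}(\sigma)$ and of $\ov{J}_{g,n}(\sigma)^{\leq 1}$, the bigness of the open substack, and Proposition \ref{P:bound-div} to identify the kernels. Your diagram-chase for injectivity is a valid substitute for the paper's bigness argument. There is a slip in your side remark in (ii): Proposition \ref{P:taut} gives $\langle\L,\L\rangle\equiv d(\L)+d(\L\otimes\omega_\pi)$, so $d(\L)$ enters once, not squared. This is harmless, since regularity already gives surjectivity.

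\textbf{The gap.} The gap is the surjectivity of $\wh{\res}^{\leq 1}$ when $\D(\sigma^{ns})=\emptyset$, which is the only non-formal statement in the theorem. You reduce it to showing that the torus stabilizers $\Gm^k$ at closed points over curves with at least two nodes act trivially on the extension of $\Xi(\sigma)^*M$. However, the key assertion, that their characters ``are determined by weights computed on the one-nodal locus'', is left unproved. To make it work you would need all of the following:
\begin{enumerate}[(a)]
\item the pieces of a $\sigma$-polystable decomposition form a tree glued along separating nodes (a non-bridge would give a $\sigma$-degenerate biconnected subcurve with $e\geq 2$);
\item for each such node, the corresponding subtorus $\Gm^2\subset\Gm^k$ stays in the stabilizer along a generization into $\ov{\J}_{g,n}(\sigma)^{\leq 1}$, where it acts trivially;
\item torus characters are locally constant along such connected loci;
\item these subtori, together with the diagonal $\Gm$, generate $\Gm^k$.
\end{enumerate}
None of this is supplied, so the surjectivity in (iv) is not established.

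\textbf{How the paper avoids this.} The paper sidesteps the torus analysis by reducing to the fine case. Note that in the fine case your descent check is immediate: all stabilizers are the scalar $\Gm$, whose weight is constant on the connected stack and zero on the open part.
\begin{enumerate}[(1)]
\item Since $\D(\sigma^{ns})=\emptyset$, \cite[Prop. 8.5]{FPV3} provides a general $\wh{\sigma}\geq\sigma$ with $\wh{\sigma}^{ns}=\sigma^{ns}$. For $n=0$ and $g$ even, one uses that $\D(\sigma^{ns})=\emptyset$ forces $\gcd(2g-2,\chi)=1$. Hence $\chi$ is odd, and $(1;g/2,\emptyset)$ is automatically nondegenerate.
\item The proof of \cite[Thm. 5.14]{FPV3} shows that the open inclusion $\ov{\J}_{g,n}(\wh{\sigma})\subseteq\ov{\J}_{g,n}(\sigma)$ induces an isomorphism $\ov{J}_{g,n}(\wh{\sigma})\cong\ov{J}_{g,n}(\sigma)$. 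So one may assume $\D(\sigma)=\emptyset$.
\item Then $\res$ is an isomorphism by (i)--(ii), and $\ov{J}_{g,n}(\sigma)=\ov{\J}_{g,n}(\sigma)\fatslash\Gm$. A class descends if and only if it lies in the kernel of the single weight homomorphism $\wei^\chi$.
\item The same condition cuts out $\RelPic(J^\chi_{g,n}/\Mgn)$ inside $\RelPic(\J^\chi_{g,n}/\Mgn)$. Hence $\wh{\res}$ is an isomorphism, and so is $\wh{\res}^{\leq 1}$, because $\wh{\res}^o$ is.
\end{enumerate}
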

\begin{proof}
We will divide the proof according to the different parts:
\begin{enumerate}[(i)]
    \item The fact that $\res^{\leq 1}$ is an isomorphism follows (see e.g. \cite[Lemma 2.3.1]{FV1}) from the fact that $\ov \J_{g,n}(\sigma)$ is regular by Proposition \ref{P:singu}\eqref{P:singu1} and $\ov \J_{g,n}(\sigma)^{\leq 1}\subset \ov \J_{g,n}(\sigma)$ is a big open substack.
    \item Since $\ov \J_{g,n}(\sigma)^{\leq 1}$ is regular, we have (see e.g. \cite[Prop. 1.9]{PTT}) an exact sequence 
    $$
    \bigoplus_{(1;h,A)\in \D(\sigma)} \frac{\langle \Delta_{h,A}^1(\sigma),  \Delta_{h,A}^1(\sigma)\rangle }{\langle \Delta_{h,A}^1(\sigma)+ \Delta_{h,A}^2(\sigma)\rangle}\to \RelPic(\ov \J_{g,n}(\sigma)^{\leq 1}/\Mbargn^{\leq 1}) \to  \RelPic(\J_{g,n}^\chi/\Mgn)\to 0,$$
    where we have used the description of the pull-back of the boundary divisors via $F(\sigma):\ov \J_{g,n}(\sigma)^{\leq 1}\to \Mbargn^{\leq 1}$ given in Proposition \ref{P:bound-div}\eqref{P:bound-div2}. 
\item The fact that $\wh{\res}^{o}$ is an isomorphism follows (using e.g. \cite[Prop. 1.9]{PTT}) from the fact that $\ov J_{g,n}(\sigma)^{\leq 1}$ is regular by Proposition \ref{P:singu}\eqref{P:singu3} and the description of the pull-back of the boundary divisors via $f(\sigma):\ov J_{g,n}(\sigma)^{\leq 1}\to \Mbargn^{\leq 1}$ given in Proposition \ref{P:bound-div}\eqref{P:bound-div1}. 
    \item The injectivity of $\wh{\res}^{\leq 1}$ follows from the fact that $\ov J_{g,n}(\sigma)^{\leq 1}\subset \ov J_{g,n}(\sigma)$ is a a big open substack.

Let us now assume that $\D(\sigma^{ns})=\emptyset$ and let us show that $\wh \res$ is an isomorphism. 

\un{Claim 1:} There exists $\wh\sigma\in \Sigma_{g,n}$ such that 
$$
\wh{\sigma}\geq \sigma, \D(\wh{\sigma})=\emptyset \text{ and } \wh{\sigma}^{ns}=\sigma^{ns}
$$

Indeed, we have to find an element $\wh \sigma=(\wh \sigma^s,\sigma^{ns})$ such that $\wh \sigma^s\geq \sigma^s$ and $\D(\wh \sigma^s)=\emptyset$. This follows from \cite[Prop. 8.5]{FPV3}, using also that if $n=0$ and $g$ is even then the assumption that $\D(\sigma^{ns})=\emptyset$ implies that  $\chi$ cannot be even because we must have that $\gcd(2g-2,\chi)=1$ by \cite[Thm. 7.1]{FPV3}.

\un{Claim 2:} The inclusion $\ov \J_{g,n}(\wh \sigma)\subseteq \ov \J_{g,n}(\sigma)$ induces an isomorphism 
$$
\ov J_{g,n}(\wh \sigma)\xrightarrow{\cong} \ov J_{g,n}(\sigma)
$$

Indeed, this follows from the proof of the implication $(1)\Rightarrow (3)$ in \cite[Thm. 5.14]{FPV3}.

\vspace{0.1cm}

\noindent Using Claim 1 and Claim 2, we can assume, up to replacing $\sigma$ with $\wh \sigma$, that $\D(\sigma)=\emptyset$. By what proved above, the restriction morphism 
\begin{equation}\label{E:iso-res}
\res: \RelPic(\ov \J_{g,n}(\sigma)/\Mbargn)\xrightarrow{\cong} \RelPic(\J_{g,n}^\chi/\Mgn)
\end{equation}
is an isomorphism. 
Furthermore, since  $\ov \J_{g,n}(\sigma)$ is fine, i.e. $\ov J_{g,n}(\sigma)=\ov \J_{g,n}(\sigma)\fatslash \Gm$, by the assumption that $\D(\sigma)=\emptyset$, then an element of $\RelPic(\ov \J_{g,n}(\sigma)/\Mbargn)$ descends to $\RelPic(\ov J_{g,n}(\sigma)/\Mbargn)$ if and only if it lies in the kernel of the weight homomorphism (defined as in 
\cite[Sec. 5]{FV2}) 
$$\wei^\chi:\RelPic(\ov \J_{g,n}(\sigma))\to \Hom(\Gm,\Gm)=\ZZ,$$
which extends \eqref{E:wei}. Since the same condition characterizes the subgroup $\RelPic(J_{g,n}^\chi/\Mgn)\subseteq \RelPic(\J_{g,n}^{\chi}/\Mgn)$ (see the proof of Theorem \ref{T:RPicJ-rig}), the isomorphism $\res$ of \eqref{E:iso-res} descends to the relative Picard groups of the corresponding relative good moduli space and it shows that $\wh{\res}$ is an isomorphism. 
\end{enumerate}
\end{proof}

\begin{remark}
The surjectivity of $\res$ (and of $\res^o$) can be proved explicitly as follows:  Theorem \ref{T:RPicJ} and Proposition \ref{P:taut} imply that $\RelPic(\J_{g,n}^\chi/\Mgn)$ is generated by $\{d(\L), d(\L\otimes \omega_{\pi})=\langle \L, \L\rangle-d(\L), \{\xi_i\}_{i=1}^n\}$ and each of these generators can be extended to $\RelPic(\ov \J_{g,n}(\sigma)/\Mbargn)$ by \eqref{E:taut-TF}.
\end{remark}

\begin{remark}
For $n=0$ and the Caporaso compactified Jacobian stack $\ov \J_g^{Cap,\chi}$, it follows from \cite[Thm. A(2)]{MV14} that the morphism \eqref{E:ker-reso} is an isomorphism.  The same proof (using a lifting of the test curves of Arbarello-Cornalba \cite{AC87}) works for any compactified universal Jacobian $\ov \J_g(\sigma)$ over $\ov{\M}_g$. It would be interesting to investigate if the result hods true for any compactified universal Jacobian stack over $\Mbargn$.
\end{remark}

\section{On the projectivity of compactified universal Jacobians}\label{Sec:proj-cJ}

The aim of this section is to prove the following Theorem, which describes the compactified universal Jacobian spaces which are projective over $\Mbargn$. 

\begin{theorem}\label{T:cla-proj}
\noindent 
 \begin{enumerate}
     \item \label{T:cla-proj1} A compactified universal Jacobian space is projective over $\Mbargn$ if and only if it is isomorphic over $\Mbargn$ to $\ov J_{g,n}([L])$, for some  $[L]\in \R_{g,n}$.
      \item \label{T:cla-proj2} A compactified universal Jacobian stack has an associated compactified universal Jacobian space which is projective over $\Mbargn$ if and only if it is equal to $\ov \J_{g,n}([L])$ for some (uniquely determined) $[L]\in \R_{g,n}$.
 \end{enumerate}   
\end{theorem}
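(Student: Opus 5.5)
The ``if'' directions are handled first. Any region contains a rational representative, so given $[L]\in\R_{g,n}$ there is a vector bundle $E$ on $\Cbargn$ with $-\mu_\pi(E)$ lying in $[L]$ modulo $\pi^*\Pic(\Mbargn)$. Clearing denominators and taking a direct sum of line bundles realizing $L$ up to pull-back gives such an $E$, and for it $\ov\J_{g,n}([L])=\ov\J_{g,n}^E$. The Esteves line bundle $\L_E=d_\pi(\I\otimes p_1^*E)^{-1}$ then descends to a relatively ample line bundle on $\ov J_{g,n}^E$ by \cite[Thm. 7.1]{FPV1}. Hence $\ov J_{g,n}([L])$ is projective over $\Mbargn$, and so is anything isomorphic to it over $\Mbargn$. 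For part (2), the uniqueness of $[L]$ follows from the injectivity of $\sigma_-$ in \eqref{E:sigma-pos} combined with Theorem \ref{T:class-cJ}.

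Part (2) ``only if'' reduces to part (1). If $\ov J_{g,n}(\sigma)$ is projective, part (1) gives $\ov J_{g,n}(\sigma)\cong \ov J_{g,n}([L])$, so Theorem \ref{T:iso-cJ}(2) shows that $\sigma^{ns}$ and $\sigma_L^{ns}$ are in the same $\PR_{g,n}$-orbit. Remark \ref{R:prop-class}(iii) then makes $\sigma^{ns}$ classical, and by Remark \ref{R:prop-class}(ii) $\sigma$ is classical, i.e. $\ov\J_{g,n}(\sigma)=\ov\J_{g,n}([L'])$.

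The core is part (1) ``only if''. Let $\A$ be $f(\sigma)$-ample on $\ov J_{g,n}(\sigma)$. By Theorem \ref{T:RelPic-cJ}(iii),(iv), its class is determined by its restriction to $J_{g,n}^\chi$, and by Corollary \ref{C:amplecJ} that restriction has the form $(2s-r)d(\L)-s\langle\L,\L\rangle-\sum a_i\xi_i$ with $r>0$.

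The plan is to find a vector bundle $E$ with two properties:
\begin{enumerate}[(a)]
\item $\ov J_{g,n}^E$ and $\ov J_{g,n}(\sigma)$ are isomorphic over $\Mbargn^{\leq 1}$;
\item the restriction of $\L_E$ to $J_{g,n}^\chi$ is a positive multiple of $\A_{|J^\chi_{g,n}}$.
\end{enumerate}

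For (b), I would compute $\L_E$ on $J_{g,n}^\chi$ via Proposition \ref{P:taut}. Choose $E$ to be a sum of line bundles $\omega_\pi^{m_j}(\sum_i\mu_{ij}\sigma_i)$ (plus boundary twists, which are invisible over $\Mgn$). Then $-\L_E=\sum_j d(\L\otimes\omega_\pi^{m_j}(\ldots))$, which is a sum of terms $(1-2m_j)d(\L)+m_j\langle\L,\L\rangle+\sum\mu_{ij}\xi_i$. Matching coefficients, $r$ becomes the rank of $E$, $s$ becomes $\sum m_j$, and $a_i$ becomes $\sum_j\mu_{ij}$. The weight-zero condition $r\chi+s(2g-2)+\sum a_i=0$ is exactly $\mu_\pi(E)=-\chi$. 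So, after multiplying $\A$ by a positive integer, every ample class is realized by some $E$; this is where the characterization of the ample cone is essential.

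For (a), Theorem \ref{T:iso-cJ} determines the spaces over $\Mbargn^{\leq 1}$. Over the generic point of $\Delta_{\irr}$ all compactified Jacobians agree. Over $\Delta_{h,A}$ the space is, up to isomorphism, independent of the value of $\sigma$ on $(1;h,A)$ (again by Theorem \ref{T:iso-cJ}(2), since $\ov J(\sigma)$ depends only on $\sigma^{ns}$, which is irrelevant for one-nodal curves). The remaining freedom in choosing $E$, namely boundary twists by $\O(C_{(h,A)})$ in the summands, does not affect (b) and can be used to make the separating data agree. I would expect that (a) follows directly, with the comparison isomorphism matching $\A$ and $\L_E$ on $\ov J^{\leq 1}$ up to pull-backs from $\Mbargn^{\leq 1}$ by Theorem \ref{T:RelPic-cJ}(iii).

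Finally, since $\Mbargn^{\leq1}$ is big and both spaces are normal and Cohen--Macaulay (Proposition \ref{P:singu}), pushforwards of powers of the line bundles are reflexive and agree on a big open set. Taking relative $\Proj$ of the section rings of $\A^{M}$ and $\L_E^{M}$ therefore identifies $\ov J_{g,n}(\sigma)\cong\ov J_{g,n}^E$ over $\Mbargn$.

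The main obstacle I expect is step (a), in particular making the isomorphism over $\Mbargn^{\leq1}$ compatible with the line bundles, together with the reflexivity argument for the $\Proj$ extension.
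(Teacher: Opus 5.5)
Your overall architecture matches the paper's: Esteves's polarization for the ``if'' parts, the reduction of (2) to (1) via Theorem \ref{T:iso-cJ} and Remark \ref{R:prop-class}, and for (1) matching $\A_{|J_{g,n}^\chi}$ with the restriction of an Esteves line bundle (Corollary \ref{C:amplecJ}, Proposition \ref{P:taut}), then extending over the big open $\Mbargn^{\leq 1}$ by a relative $\Proj$. Your reflexivity argument for that last step is fine, since all fibres of $f(\sigma)$ have dimension $g$.

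\textbf{The gap is step (a).} This is exactly where the paper's proof does its work, and you only assert it. What is needed is a boundary divisor $D=\sum\gamma_{(h,A)}\O(C_{(h,A)})$ with $\sigma^s=\sigma^s_{(M+D)/r}$, where $M^{-1}=s\omega_\pi+\sum_i a_i\Sigma_i$. Set $\eta(h,A)=\sigma(1;h,A)+\frac{s(2h-1)+\sum_{i\in A}a_i}{r}\in\frac{1}{r}\ZZ$. The relation $r\chi+s(2g-2)+\sum_i a_i=0$ and \eqref{E:sumUni} give $\eta((h,A)^c)=-\eta(h,A)$ if $(1;h,A)$ is $\sigma$-degenerate and $1-\eta(h,A)$ otherwise. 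Matching both $\sigma(1;h,A)$ and $\sigma((1;h,A)^c)$ then forces $x:=(\gamma_{(h,A)^c}-\gamma_{(h,A)})/r$ to equal $\eta(h,A)$ in the degenerate case. In the non-degenerate case it forces $\eta(h,A)-1<x<\eta(h,A)$.

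That open interval meets $\frac{1}{r}\ZZ$ only if $r\geq 2$. Indeed, for $\rk E=r=1$ the number $\deg_{C_1}E$ is an integer, so every separating type is $\sigma_E$-degenerate. No boundary twist can then reproduce a $\sigma$ that is non-degenerate at some $(1;h,A)$, e.g.\ any fine $\sigma$. So you must first replace $\A$ by $\A^{\otimes 2}$. Your phrase ``after multiplying $\A$ by a positive integer'' sits in (b), where it is not needed: every $r\geq 1$, $s$, $a_i$ is realized by $\O^{\oplus(r-1)}\oplus\omega_\pi^{s}(\sum_i a_i\sigma_i)$. It is never tied to this constraint.

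\textbf{The route via Theorem \ref{T:iso-cJ}(2) does not give what you need either.} That theorem compares spaces whose non-separating parts lie in one $\PR_{g,n}$-orbit. In general $\sigma^{ns}$ and $\sigma_E^{ns}$ do not, since $\sigma^{ns}$ may be non-classical, which is the whole point. You would at least have to pass through an auxiliary V-function such as $(\sigma_E^s,\sigma^{ns})$ via \eqref{E:decSigma}.

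More importantly, comparing $\A$ with $\L_E$ through Theorem \ref{T:RelPic-cJ}(iii) requires the isomorphism $\Phi$ over $\Mbargn^{\leq 1}$ to restrict to the identity of $J_{g,n}^\chi$, or at least to fix the class of $\A_{|J_{g,n}^\chi}$. Isomorphisms coming from the $\PR_{g,n}$-action need not do this. For instance, a translation $t_L$ by a non-trivial $L\in\RelPic^0(\Cgn)$ changes the class of $\A_{|J_{g,n}^\chi}$ by $-r$ times $t_L^*d(\L)-d(\L)$, which is non-zero; compare the Remark following Corollary \ref{C:amplecJ}.

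In the paper both issues disappear. With $D$ chosen as above, $\ov\J_{g,n}(\sigma)$ and $\ov\J_{g,n}([(M+D)/r])$ are literally the same stack over $\Mbargn^{\leq 1}$, because over one-nodal curves the stability condition only involves the values $\sigma(1;h,A)$. Hence $\Phi$ is the identity over $\Mgn$. Proposition \ref{P:for-pol} together with the injectivity of $\res^o$ then yields $\Phi^*\L_{(M+D)/r}=\A$.
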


First of all, we recall that classical compactified Jacobian spaces are projective over $\Mbargn$ (see Theorem \cite[Thm. C]{FPV1}) and we write down an explicit polarization.

\begin{theorem}\label{T:polar}
Fix a region $[L]$ in $\R_{g,n}$ and choose a representative $L$ of the form 
$$
L=\frac{M}{r} \text{ for some } r\in \NN_{>0} \text{ and } M\in \RelPic(\Cbargn).
$$
Consider the line bundle 
\begin{equation}\label{E:lbMr}
\L_{M/r}:=d_{\pi}(\I\otimes p_1^*(\O_{\Cbargn}^{\oplus(r-1)}\oplus M^{-1}))^{-1}
\end{equation}
on $\ov \J_{g,n}([L])$, where $d_{\pi}$ denotes the determinant of cohomology with respect to the universal family $\pi:\Cbargn\times_{\Mbargn} \ov \J_{g,n}([L])\to \ov \J_{g,n}([L])$, $\I$ is the universal sheaf and $p_1$ is the projection onto the first factor. 

Then $\L_{M/r}$ descends to a line bundle on $\ov J_{g,n}([L])$ which is relatively ample over $\Mbargn$.
\end{theorem}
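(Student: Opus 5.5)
The plan is to recognise $\L_{M/r}$ as an Esteves line bundle $\L_E$ for the vector bundle
$$E:=\O_{\Cbargn}^{\oplus(r-1)}\oplus M^{-1},$$
and then to invoke \cite[Thm. 7.1]{FPV1}, which is based on \cite[Sec. 6,7]{esteves}. I would also check the two hypotheses of that result by hand: descent and relative ampleness.

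\emph{Step 1: identify the compactified Jacobian.} The bundle $E$ has rank $r$ and relative degree $\deg_\pi(E)=-\deg_\pi(M)=-r\chi$, where $\chi=\deg_\pi(L)$. Hence $\mu_\pi(E)=-\chi$ is an integer, as required in construction (B). For every $X\in\Mbargn$ and every $Y\in\BCon(X)$ we have
$$-\frac{\deg_Y(E_{|X})}{\rk E}=\frac{\deg_Y(M_{|X})}{r}=\deg_Y(L_{|X}).$$
So the defining inequalities of $\ov\J^E_{g,n}$ and of $\ov\J_{g,n}(L)$ agree, and $\ov\J^E_{g,n}=\ov\J_{g,n}(\sigma_L)=\ov\J_{g,n}([L])$. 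Since only the region of $L$ matters, the choice of the rational representative $M/r$ is harmless. Also $\L_{M/r}=\L_E$ by definition.

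\emph{Step 2: descent.} By Alper's descent criterion for good moduli spaces, $\L_E$ descends along $\Xi:\ov\J_{g,n}([L])\to\ov J_{g,n}([L])$ if and only if the stabilizer of every closed point acts trivially on its fiber.
\begin{itemize}
\item A closed point is a $\sigma_L$-polystable pair $(X,I)$ by \cite[Prop. 6.17]{FPV1}. Thus $I=\bigoplus_j I_j$, with $I_j$ a stable sheaf on a subcurve $Y_j$, and the stabilizer is $\Gm^{k}$, with one factor scaling each $I_j$.
\item The $j$-th factor acts on the fiber of $d_\pi(\I\otimes E)$ with weight $\chi(I_j\otimes E_{|Y_j})=r\chi(I_j)+\deg_{Y_j}(E)$.
\item Polystability forces the inequality to be an equality on each summand, $\chi(I_j)=\deg_{Y_j}(L)$, which is the degenerate case. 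Hence the weight is $r\deg_{Y_j}(L)-\deg_{Y_j}(M)=0$.
\end{itemize}
So all weights vanish and $\L_E$ descends to a line bundle $\wh\L_E$ on $\ov J_{g,n}([L])$. In the fine case this reduces to the vanishing of the global weight $\wei^\chi$, as in the proof of Theorem \ref{T:RPicJ-rig}.

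\emph{Step 3: relative ampleness.} The morphism $\ov J_{g,n}([L])\to\Mbargn$ is proper. It therefore suffices to prove that $\wh\L_E$ is ample on each geometric fiber, i.e. on the compactified Jacobian $\ov J_X$ of a fixed curve $X$.
\begin{itemize}
\item Following Esteves, for $m\gg0$ the sheaf $E^{\otimes}$-twisted by $\O(m\cdot\text{ample})$ (equivalently, after replacing $E$ by a suitable twist, which only changes $\wh\L_E$ by a pull-back from the base) has the property that sections of $\wh\L_E^{\otimes k}$ are theta functions $\theta_F$.
\item Here $F$ ranges over vector bundles with $\mu(F)=\mu(E)$, and $\theta_F$ vanishes exactly at those $[I]$ with $H^0(I\otimes F)\neq0$.
\item Semistability of $I$ with respect to $E$ is equivalent to the existence of such an $F$ with $H^0(I\otimes F)=H^1(I\otimes F)=0$. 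This gives base-point-freeness.
\item Separation of points, i.e. finiteness of the induced map, follows because distinct polystable classes are separated by some $\theta_F$.
\end{itemize}
This is precisely the content of \cite[Thm. 7.1]{FPV1}.

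I expect the main obstacle to be Step 3 in the non-fine case: separating S-equivalence classes uniformly in families requires the full Esteves machinery. I would not redo it, and would cite \cite[Thm. 7.1]{FPV1} after Step 1. Steps 1 and 2 are the computations that make the citation applicable to the specific bundle $\L_{M/r}$.
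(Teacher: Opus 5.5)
Your Step 1 is exactly the paper's proof. With $E=\O_{\Cbargn}^{\oplus(r-1)}\oplus M^{-1}$ one has $-\deg_Y(E_{|X})/\rk E=\deg_Y(L_{|X})$, so $\ov\J_{g,n}([L])$ is Esteves's compactified Jacobian for $E$ and $\L_{M/r}=\L_E$; then \cite[Thm. 7.1]{FPV1} gives both descent and relative ampleness. Your Steps 2 and 3 are unnecessary once that citation is made. The weight computation in Step 2 is correct, but the twisting remark in Step 3 is imprecise: twisting $E$ by a line bundle of nonzero relative degree moves you to a different, translated, compactified Jacobian rather than changing $\L_E$ by a pull-back from the base.
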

\begin{proof}
Consider the following vector bundle on $\Cbargn$
$$
E=\O_{\Cbargn}^{\oplus(r-1)}\oplus M^{-1},
$$
which has relative slope equal to $\mu_{\pi}(E):=\frac{\deg_{\pi}(E)}{\rk E}=-\deg_{\pi}(L)$.
By construction, we have that 
$$\deg(L_{|X})=-\mu(E_{|X}) \text{ for any } X\in \Mbargn.$$  This implies that $\ov \J_{g,n}([L])$ (resp. $\ov J_{g,n}([L])$) is the Esteves's universal compactified Jacobian stack (resp. space) associated to the vector bundle $E$ (see \cite[Example 6.7(2)]{FPV1}).
Then we conclude by applying \cite[Thm. 7.1]{FPV1}. 
\end{proof}

We now compute the class of the polarization on a classical compactified Jacobian space given in  Theorem \ref{T:polar}, restricted to the universal Jacobian.

\begin{proposition}\label{P:for-pol}
Consider a classical compactified universal Jacobian stack $\ov \J_{g,n}([L])$ and choose a rational representative of $[L]$ of the form
$$
L=\frac{M}{r}, \text{ where } M^{-1}=s\omega_{\pi}+\sum_{i=1}^n a_i \Sigma_i 
+\sum_{(h,A)\in \Bgn}\gamma_{(h,A)}\O(C_{(h,A)})\in \RelPic(\Cbargn)  \text{ and } r>0.
$$
Then the restriction of the line bundle $\L_{M/r}$ of \eqref{E:lbMr} to the relative Picard group of $\J_{g,n}^{\deg_{\pi}(L)}/\Mgn$ is equal to 
\begin{equation}\label{E:for-lbMr}
(\L_{M/r})_{|\J_{g,n}^{\deg_{\pi}(L)}}\equiv (2s-r)d(\L)-s\langle \L,\L\rangle-\sum_i a_i \xi_i.
\end{equation}
\end{proposition}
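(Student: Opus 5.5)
The plan is to reduce the computation to the tautological formulas of Proposition \ref{P:taut}. Three facts are used in turn: the determinant of cohomology commutes with base change, it is additive on direct sums, and on smooth curves the boundary line bundles $\O(C_{(h,A)})$ disappear.

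First, I restrict $\L_{M/r}$ to the open substack $\J_{g,n}^{\deg_{\pi}(L)}\subseteq \ov \J_{g,n}([L])$ lying over $\Mgn$. Since $d_{\pi}$ is compatible with base change, this restriction is the line bundle
$$d_{\pi}\bigl(\L\otimes p_1^*(\O^{\oplus(r-1)}\oplus M^{-1}_{|\Cgn})\bigr)^{-1}$$
for the universal family $\pi:\Cgn\times_{\Mgn}\J_{g,n}^{\deg_\pi(L)}\to \J_{g,n}^{\deg_\pi(L)}$. Here the universal sheaf $\I$ restricts to the universal line bundle $\L$. Next, the divisors $C_{(h,A)}$ lie entirely over the boundary of $\Mbargn$, so each $\O(C_{(h,A)})$ is trivial on $\Cgn$. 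Hence $M^{-1}_{|\Cgn}=\omega_{\pi}^{s}(\sum_i a_i\Sigma_i)$, and its pullback to the universal family is $\omega_{\pi}^{s}(\sum_i a_i\sigma_i)$.

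By multiplicativity of $d_{\pi}$ in (split) short exact sequences \eqref{detprop}, the restriction becomes
$$
(\L_{M/r})_{|\J_{g,n}^{\deg_{\pi}(L)}}\equiv -(r-1)\,d(\L)-d\bigl(\L\otimes\omega_{\pi}^{s}(\textstyle\sum_i a_i\sigma_i)\bigr).
$$
I then apply the first formula of Proposition \ref{P:taut} with exponent $1$ on $\L$ (so the binomial term vanishes), $m=s$ and $\mu_i=a_i$:
$$d\bigl(\L\otimes\omega_{\pi}^{s}(\textstyle\sum_i a_i\sigma_i)\bigr)\equiv (1-2s)\,d(\L)+s\langle\L,\L\rangle+\sum_i a_i\xi_i.$$
Substituting gives $-(r-1)d(\L)-(1-2s)d(\L)-s\langle\L,\L\rangle-\sum_i a_i\xi_i=(2s-r)d(\L)-s\langle\L,\L\rangle-\sum_i a_i\xi_i$, which is \eqref{E:for-lbMr}.

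The only point needing care, and the main obstacle, is bookkeeping modulo $\Pic(\Mgn)$. Terms such as $d_{\pi}(\O)$, which appear implicitly in Proposition \ref{P:taut}, are pulled back from $\Mgn$ and so vanish in $\RelPic$. Likewise, the choice of $M$ only up to $\pi^*\Pic(\Mbargn)$ changes $\L_{M/r}$ only by pullbacks from the base, because twisting a sheaf by the pullback of a line bundle $N$ on the base multiplies $d_{\pi}$ by a power of $N$. So the formula is well defined in the relative Picard group, and the computation above holds there.
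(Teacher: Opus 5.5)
Your proof is correct and follows the paper's argument exactly. You restrict via base change, split off $-(r-1)d(\L)$ by additivity of $d_\pi$, and apply the first formula of Proposition~\ref{P:taut} with $m=s$, $\mu_i=a_i$ and exponent $1$ on $\L$. Your extra remarks (that each $\O(C_{(h,A)})$ is trivial over $\Mgn$, and that terms such as $d_\pi(\O)$ vanish in the relative Picard group) only make explicit what the paper leaves implicit.
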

\begin{proof}
By the definition of $\L_{M/r}$, the functoriality and the additivity of the determinant of cohomology, we have that 
\begin{equation}\label{E:form1}
    (\L_{M/r})_{|\J_{g,n}^{\deg_{\pi}(L)}}=d_{\pi}(\L\otimes p_1^*(\O_{\Cbargn}^{\oplus(r-1)}\oplus M^{-1}))^{-1}=-(r-1)d(\L)-d(\L\otimes \omega_{\pi}^s(\sum_i a_i \sigma_i)).
    \end{equation}
Proposition \ref{P:taut} gives that 
\begin{equation}\label{E:form2}
   d(\L\otimes \omega_{\pi}^s(\sum_i a_i \sigma_i))\equiv (1-2s)d(\L)+s\langle \L,\L\rangle+\sum_{i=1}^na_i \xi_i.
\end{equation}
Combining \eqref{E:form1} and \eqref{E:form2}, we get the assertion.    
\end{proof}

We can now prove the main result of this Section. 

\begin{proof}[Proof of Theorem \ref{T:cla-proj}]
The if part of both Theorems follow from Theorem \ref{T:polar}.

Let us now prove the only if implication of part \eqref{T:cla-proj2}, assuming part \eqref{T:cla-proj1}.
Let $\ov \J_{g,n}(\sigma)$ be a compactified universal Jacobian stack such that $\ov J_{g,n}(\sigma)$ is projective over $\Mbargn$. Then part \eqref{T:cla-proj1} implies that there exists $[L]\in \R_{g,n}$ such that 
$\ov J_{g,n}(\sigma)=\ov J_{g,n}([L])$. By Theorem \ref{T:iso-cJ}, we deduce that $\sigma^{ns}=\PR_{g,n}\cdot [L]$. We now conclude that $\sigma$ is classical by Remark \ref{R:prop-class}.

We now show the only if implication of part \eqref{T:cla-proj1}. Assume that $\ov J_{g,n}(\sigma)$ is projective over $\Mbargn$ and pick a relatively ample line bundle $\A$. Up to taking a power of $\A$, Corollary \ref{C:amplecJ} implies that 
\begin{equation}\label{E:A-expl}
\A_{|J_{g,n}^{|\sigma|}}=(2s-r)d(\L)-s\langle \L,\L\rangle -\sum_{i=1}^n a_i \xi_i \quad \text{ with } r|\sigma|+s(2g-2)+\sum_{i=1}^n a_i=0 \text{ and } r\geq 2.
\end{equation}
Consider the following line bundle on $\Cbargn$
\begin{equation}\label{E:M-expl}
M^{-1}=s\omega_{\pi}+\sum_{i=1}^n a_i \Sigma_i.
\end{equation}

\un{Claim:} There exists a boundary divisor $D:=\sum_{(h,A)\in \Bgn}\gamma_{(h,A)}\O(C_{(h,A)})$ in $\Cbargn$ such that 
$$\sigma^s= \left(\sigma_{\frac{M+D}{r}}\right)^s.$$

Indeed, first of all, using \eqref{E:A-expl} and \eqref{E:M-expl}, we compute  
\begin{equation}\label{E:equ-char}
|\sigma_{\frac{M+D}{r}}|=\deg_{\pi} \frac{M+D}{r}=
\frac{-s(2g-2)-\sum_{i=1}^n a_i}{r}=|\sigma|,
\end{equation}
for any boundary divisor $D$. Consider the function
$$
\begin{aligned}
    \eta: \Bgn & \longrightarrow \QQ,\\
    (h,A) & \mapsto \sigma(1;h,A)+\frac{s(2h-1)+\sum_{i\in A}a_i}{r}.
\end{aligned}
$$
Using \eqref{E:equ-char} and \eqref{E:sumUni}, we compute 
\begin{equation}\label{E:eta-comp}
\eta((h,A)^c)=
\begin{cases}
-\eta(h,A) & \text{ if } (1;h,A)\in \D(\sigma), \\
1-\eta(h,A) & \text{ if } (1;h,A)\not\in \D(\sigma).
\end{cases}
\end{equation}
Using the fact that $\eta$ take values in $\frac{1}{r}\ZZ:=\{q\in \QQ: rq\in \ZZ\}$, the hypothesis that $r\geq 2$ and \eqref{E:eta-comp}, we can choose, for any pair $\{(h,A),(h,A)^c\}$ in $\Bgn$, a pair of integral numbers $\{\gamma_{(h,A)}, \gamma_{(h,A)^c}\}$ in such a way that 
\begin{equation}\label{E:gamma-pro}
\begin{sis}
 &\frac{\gamma_{(h,A)^c}-\gamma_{(h,A)}}{r}=\eta(h,A) & \text{ if } (1;h,A)\in \D(\sigma),\\   
 &\eta(h,A)-1<\frac{\gamma_{(h,A)^c}-\gamma_{(h,A)}}{r}<\eta(h,A)  & \text{ if } (1;h,A)\not\in \D(\sigma).
\end{sis}
\end{equation}
We now compute, using \eqref{E:map-sigma2}, \eqref{E:M-expl} and \eqref{E:gamma-pro}:
$$
\sigma_{\frac{M+D}{r}}(1;h,A)=\left\lceil \frac{-s(2h-1)-\sum_{i\in A}a_i-\gamma_{(h,A)}+\gamma_{(h,A)^c}}{r} \right\rceil=\sigma(1;h,A),
$$
which concludes the proof of the Claim.

\vspace{0.1cm}

Consider now the line bundle 
$$
L:=\frac{M+D}{r}\in \RelPic^{|\sigma|}(\Cbargn)_{\RR}
$$
and its associated classical compactified universal Jacobian stack $\ov \J_{g,n}([L])$ and space $\ov J_{g,n}([L])$.

The Claim implies that 
$$\ov \J_{g,n}(\sigma)^{\leq 1}=\ov \J_{g,n}(\sigma)_{|\Mbarg^{\leq 1}}=\ov \J_{g,n}([L])_{|\Mbargn^{\leq 1}}=\ov \J_{g,n}([L])^{\leq 1},$$
because the restriction of a compactified universal Jacobian stack $\ov \J_{g,n}(\sigma)$ over the generic point of the boundary divisor $\Delta_{h,A}$ of $\Mbargn$ depends only on $\sigma(1;h,A)$. Therefore, we get an isomorphism 
$$\Phi:\ov J_{g,n}(\sigma)^{\leq 1}\xrightarrow{\cong}  \ov J_{g,n}([L])^{\leq 1}$$
over $\Mbargn^{\leq 1}$, which is the identity over $\Mgn$. Consider the induced commutative diagram 
\begin{equation}\label{E:Phi-Pic}
\begin{tikzcd}
  \RelPic(\ov J_{g,n}(\sigma)^{\leq 1}/\Mbargn^{\leq 1}) \arrow["\cong", "\res^o(\sigma)" swap, rd] & & \RelPic(\ov J_{g,n}([L])^{\leq 1}/\Mbargn^{\leq 1})  \arrow["\res^o(L)", ld, "\cong" swap]  \arrow[ll, "\Phi^*" swap, "\cong"] \\
     & \RelPic(J_{g,n}^{|\sigma|}/\Mgn)  &  
\end{tikzcd}
\end{equation}
where the vertical restriction arrows are isomorphisms by Theorem \ref{T:RelPic-cJ}. Using the formulas \eqref{E:for-lbMr} and \eqref{E:A-expl} and the injectivity of $\res^o(\sigma)$, we get that 
\begin{equation}\label{E:A=L}
\res^o(\sigma)(\Phi^*(\L_{(M+D)/r}))=\res^o(\sigma)(\A)\Rightarrow\Phi^*(\L_{(M+D)/r})=\A\in \RelPic(\ov J_{g,n}(\sigma)^{\leq 1}/\Mbargn^{\leq 1}).
\end{equation}

Consider now the morphisms $f(\sigma):\ov J_{g,n}(\sigma)\to \Mbargn$ and $f([L]):\ov J_{g,n}([L])\to \Mbargn$ and their restrictions, respectively, $f(\sigma)^{\leq 1}$ and $f([L])^{\leq 1}$ over $\Mbargn^{\leq 1}$. From \eqref{E:A=L}, we get an isomorphism of graded sheaves of $\O_{\Mbargn^{\leq 1}}$-modules:
\begin{equation}\label{E:iso-grd}
  \bigoplus_{n\in \NN}   f(\sigma)^{\leq 1}_*(\A^{\otimes n})  \xleftarrow[\cong]{\Phi^*}  \bigoplus_{n\in \NN}   f([L])^{\leq 1}_*(\L_{(M+D)/r}^{\otimes n}). 
\end{equation}
Since $\A$ is relatively ample for $f(\sigma)$ by assumption and $\L_{(M+D)/r}$ is relatively ample for $f([L])$ by Theorem \ref{T:polar}, then, by the relative Serre vanishing, there exists $M>>0$ such that 
$$
f(\sigma)_*(\A^{\otimes kM}) \text{ and } f([L])_*(\L_{(M+D)/r}^{\otimes kM}) \text{ are locally free on } \Mbargn \text{ for any } k\geq 0.
$$
Since $j:\Mbargn^{\leq 1}\hookrightarrow \Mbargn$ is a big open subset,  we have the following isomorphisms of graded sheaves of $\O_{\Mbargn}$-modules
\begin{equation}\label{E:j-iso}
    j_*\left(\bigoplus_{k\in \NN}   f(\sigma)^{\leq 1}_*(\A^{\otimes kM})\right)=\bigoplus_{k\in \NN}   f(\sigma)_*(\A^{\otimes kM}) \text{ and }
j_*\left(\bigoplus_{k\in \NN}   f([L]^{\leq 1}_*(\L_{(M+D)/r}^{\otimes kM})\right)=\bigoplus_{k\in \NN}   f([L])_*(\L_{(M+D)/r}^{\otimes kM}).
\end{equation}
Using again that $\A$ is relatively ample for $f(\sigma)$  and $\L_{(M+D)/r}$ is relatively ample for $f([L])$, we have that 
\begin{equation}\label{E:Proj}
    \Proj_{\Mbargn}\bigoplus_{k\in \NN}   f(\sigma)_*(\A^{\otimes kM})\cong \ov J_{g,n}(\sigma) \text{ and }
\Proj_{\Mbargn}\bigoplus_{k\in \NN}   f([L])_*(\L_{(M+D)/r}^{\otimes kM}) \cong \ov J_{g,n}([L]). 
\end{equation}
By combining \eqref{E:iso-grd}, \eqref{E:j-iso} and \eqref{E:Proj}, we get that 
$$\ov J_{g,n}(\sigma)\cong \ov J_{g,n}([L]),$$
which concludes the proof. 
\end{proof}

\begin{corollary}\label{C:proj-coarse}
A compactified universal Jacobian space $\ov J_{g,n}(\sigma)$ has a projective (over $\Spec \Z$) coarse moduli space $|\ov J_{g,n}(\sigma)|$ if and only if it is isomorphic over $\Mbargn$ to $\ov J_{g,n}([L])$, for some  $[L]\in \R_{g,n}$.   
\end{corollary}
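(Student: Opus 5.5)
By Theorem \ref{T:cla-proj}\eqref{T:cla-proj1}, it is enough to show that the coarse moduli space $|\ov J_{g,n}(\sigma)|$ is projective over $\Spec \ZZ$ if and only if $f(\sigma):\ov J_{g,n}(\sigma)\to \Mbargn$ is projective. Both maps $\ov J_{g,n}(\sigma)\to |\ov J_{g,n}(\sigma)|$ and $\Mbargn\to |\Mbargn|=\ov M_{g,n}$ are coarse moduli maps. Moreover, $|\ov J_{g,n}(\sigma)|$ is proper over $\ov M_{g,n}$, and $\ov M_{g,n}$ is projective over $\Spec\ZZ$ (Knudsen, Mumford).

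\textbf{Projective over $\Mbargn$ implies projective coarse space.} Suppose $f(\sigma)$ is projective, with relatively ample line bundle $\A$. We argue as in the proof of Theorem \ref{T:cla-proj}: $\ov J_{g,n}(\sigma)\cong \Proj_{\Mbargn}\mathcal{S}$, where $\mathcal{S}=\bigoplus_k f(\sigma)_*(\A^{\otimes kM})$. Replacing $\A$ by a suitable power, we may assume the automorphism groups of geometric points (which are finite, since $\Mbargn$ is DM and $f(\sigma)$ is representable) act trivially on the fibres of $\A$. Then $\A$ descends to a line bundle $\bar\A$ on the coarse space $|\ov J_{g,n}(\sigma)|$. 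The key step is to check that $\bar\A$ is ample relative to $|\ov J_{g,n}(\sigma)|\to \ov M_{g,n}$. Since $\ov M_{g,n}$ carries an ample line bundle $H$, the bundle $\bar\A\otimes \bar f^*H^{\otimes N}$ is then ample for $N\gg 0$, so the coarse space is projective.

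To prove relative ampleness of $\bar\A$, I would pass to an étale (or fppf) chart. Choose a finite flat surjective cover $Z\to \Mbargn$ by a scheme; this exists because $\Mbargn$ is a quotient of a projective scheme by a finite group, or one can use Kresch–Vistoli. Ampleness of $\A$ on the base change $\ov J_{g,n}(\sigma)\times_{\Mbargn} Z\to Z$ then descends to the coarse spaces through finite maps, using that ampleness is preserved and detected under finite surjective morphisms. This descent is the main obstacle. Alternatively, one can use that the good-moduli / coarse maps of tame stacks identify invariant sections: $\mathcal S$ pushed to $\ov M_{g,n}$ becomes the sheaf of invariants, and $|\ov J_{g,n}(\sigma)|=\Proj_{\ov M_{g,n}}(\mathcal{S})$ relative to the coarse map. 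This needs care in positive and mixed characteristic, where invariants do not commute with base change, but the Proj of the invariant algebra still recovers the coarse space.

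\textbf{Projective coarse space implies projective over $\Mbargn$.} Let $\bar\A$ be ample on $|\ov J_{g,n}(\sigma)|$, and let $\A$ be its pullback to $\ov J_{g,n}(\sigma)$. Since $f(\sigma)$ is representable (a relative algebraic space) and proper, it suffices to check that $\A$ is ample on the geometric fibres of $f(\sigma)$. The fibre of $f(\sigma)$ over a geometric point $X$ maps finitely onto its image in $|\ov J_{g,n}(\sigma)|$, namely the fibre over $|X|$ modulo $\mathrm{Aut}(X)$. The pullback of an ample bundle under a finite morphism is ample, so $\A$ is ample on each fibre. Hence $f(\sigma)$ is projective. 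Combining both directions with Theorem \ref{T:cla-proj}\eqref{T:cla-proj1} gives the corollary.
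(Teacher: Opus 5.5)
Your proposal is correct and follows essentially the paper's route. You use representability of $f(\sigma)$ to form the square of coarse spaces, descend (a power of) the line bundle, compare relative ampleness fibrewise using that the fibres of $|f(\sigma)|$ are finite quotients of those of $f(\sigma)$, and use projectivity of $|\Mbargn|$ before invoking Theorem~\ref{T:cla-proj}\eqref{T:cla-proj1}. The step you flag as the main obstacle, relative ampleness of the descended bundle $\bar\A$, is settled in the paper by exactly the fibrewise finite-quotient argument you already use for the converse, together with the fact that ampleness descends along finite surjections of proper spaces. So the finite flat cover and the invariant-$\Proj$ detours are unnecessary.
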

\begin{proof}
Since the morphism $f(\sigma):\ov J_{g,n}(\sigma)\to \Mbargn$ is representable, by taking their associated coarse moduli spaces we get the following commutative diagram
    \begin{equation}\label{E:coarse}
\begin{tikzcd}
 \ov J_{g,n}(\sigma) \arrow[r] \arrow["f(\sigma)" swap, d] &   {|\ov J_{g,n}(\sigma)|}  \arrow["|f(\sigma)|", d]  \\
    \Mbargn \arrow[r] & {|\Mbargn|}
\end{tikzcd}
\end{equation}
We now conclude using Theorem \ref{T:cla-proj}\eqref{T:cla-proj1} and the fact that:
\begin{itemize}
    \item $f(\sigma)$ is projective if and only if $|f(\sigma)|$ is projective, since $\QQ$-line bundles on $\ov J_{g,n}(\sigma)/\Mbargn$ descend to $\QQ$-line bundles on $|\ov J_{g,n}(\sigma)|/|\Mbargn|$ and relative ampleness is preserved since the fibers of $|f(\sigma)|$ are finite quotients of the fibers of $f(\sigma)$;
    \item $|f(\sigma)|$ is projective if and only if $|\ov J_{g,n}(\sigma)|$ is projective, since $|\Mbargn|$ is projective over $\Spec \Z$.
\end{itemize}
\end{proof}

    \bibliographystyle{alpha}	
    \bibliography{bibtex}
\end{document}